\newcommand{\C}{\mathbb{C}}
\newcommand{\R}{\mathbb{R}}
\newcommand{\Z}{\mathbb{Z}}
\newcommand{\N}{\mathbb{N}}
\newcommand{\bK}{\mathbb{K}}
\newtheorem{dfn}{Definition}[section]
\newtheorem{thm}[dfn]{Theorem}
\newtheorem{lem}[dfn]{Lemma}
\newtheorem{rem}[dfn]{Remark}
\newtheorem{cor}[dfn]{Corollary}
\newtheorem{prop}[dfn]{Proposition}
\newtheorem{ex}[dfn]{Example}
\title[Spanier--Whitehead K-duality and duality of extensions of $C^*$-algebras]{Spanier--Whitehead K-duality \\ and duality of extensions of $C^*$-algebras}
\author{Ulrich Pennig}
\address{Cardiff University, School of Mathematics, Senghennydd Road, Cardiff, CF24 4AG, Wales, United Kingdom}
\email{PennigU@cardiff.ac.uk}
\author{Taro Sogabe}
\address{Kyoto University, Department of Mathematics, Kitashirakawa Oiwake-cho, Sakyo-ku, Kyoto 606-8502, Japan}
\email{sogabe.taro.3v@kyoto-u.ac.jp}
\date{}
\begin{document}
\maketitle
\begin{abstract}
KK-theory is a bivariant and homotopy-invariant functor on $C^*$-algebras that combines K-theory and K-homology. KK-groups form the morphisms in a triangulated category. Spanier-Whitehead K-Duality intertwines the homological with the cohomological side of KK-theory. Any extension of a unital $C^*$-algebra by the compacts has two natural exact triangles associated to it (the extension sequence itself and a mapping cone sequence). We find a duality (based on Spanier-Whitehead K-duality) that interchanges the roles of these two triangles together with their six-term exact sequences. This allows us to give a categorical picture for the duality of Cuntz--Krieger--Toeplitz extensions discovered by K. Matsumoto. 
\end{abstract}

\section{Introduction}
Kasparov's KK-theory combines both of K-homology and K-theory of $C^*$-algebras into one additive bivariant functor. The KK-groups provide topological invariants that play a crucial role in index theory and the classification of nuclear $C^*$-algebras and their extensions. In this context they are used as a tool for understanding $*$-homomorphisms between them. In some situations the KK-groups even contain the complete information about the set of $*$-homomorphisms between two $C^*$-algebras up to homotopy: By the celebrated Kirchberg-Phillips theorem this happens, for example, for stable Kirchberg algebras (see \cite{RS} for a survey).
%Kasparov's KK-group is a major invariant in the theory of C*-algebras and it would be a tool to understand the morphisms of C*-algebras.
%A typical well-known result in which we may regard KK-groups as the sets of morphisms of C*-algebras is the  celebrated Kirchberg--Phillips theorem.
More precisely, for two stable Kirchberg algebras $A, B$, the theorem shows that every element of $KK(A, B)$ is represented by a $*$-homomorphism $A\to B$ and the choice of the $*$-homomorphism is unique up to homotopy. The composition of the homomorphisms is described by the Kasparov product. It is associative and unital and therefore allows us to view the KK-groups as morphisms in a category with rich additional structure, including several dualities. Moreover, it was shown in \cite{MN} that it is a tensor triangulated category.
%by which the KK-theory has a rich categorical structure including several dualities.

In the present paper we analyse the interplay of Spanier--Whitehead K-duality with the triangulated structure. Classical Spanier--Whitehead duality takes place in the stable homotopy category of topological spaces. The dual of an object $X$ is witnessed by two morphisms $\nu_{X,Y} \colon X \wedge Y \to S^0$ and $\mu_{X,Y} \colon S^0 \to Y \wedge X$, which have to satisfy certain zig-zag relations. Following the idea that KK-theory can be viewed as stable homotopy theory for $C^*$-algebras this duality was transferred to the KK-category in \cite{KS, KP}, where the authors called it Spanier--Whitehead K-duality (see Def.~\ref{dsw}).
%the Spanier--Whitehead duality in the (stable) homotopy theory is introduced into the KK-theory.
%It is called Spanier--Whitehead K-duality (see Def. \ref{dsw}),

It was shown in \cite{KS} that any separable UCT $C^*$-algebra with finitely generated K-groups is dualizable with a separable UCT dual algebra.
For a separable $C^*$-algebra $A$ and its separable dual algebra $D(A)$,
the duality provides an isomorphism
\[KK(A, \mathbb{C})\cong KK(\mathbb{C}, D(A)).\]
Thus, the assumption that the K-groups are finitely generated cannot be dropped because separable algebras have countable K-groups.
The duality is defined by two duality classes
\[\mu_A\in KK(\mathbb{C}, A\otimes D(A)),\quad \nu_A\in KK(D(A)\otimes A, \mathbb{C})\]
which give rise to a group isomorphism
%by which one has a group isomorphism 
(see Sec. \ref{sd})
\[D_{\mu_A, \nu_B}(-) : KK(A, B)\to KK(D(B), D(A)).\]

%In \cite{MN}, it is proved that the category $KK$ of separable C*-algebras whose morphism sets are given by $KK$-group is triangulated.
The triangulated structure on the KK-category defined in \cite{MN} can be understood in terms of exact triangles induced by abstract mapping cone sequences
\[SA\xrightarrow{i(f)}C_f\xrightarrow{e(f)}B\xrightarrow{f}A\]
where the algebra $C_{f}$ denotes the mapping cone algebra of $f$.
This is a non-commutative analogue of the Puppe sequence in topology,
and we can extend this sequence as follows
\[\cdots SB\xrightarrow{Sf} SA\xrightarrow{i(f)}C_f\xrightarrow{e(f)}B\xrightarrow{f}A\xrightarrow{d(f)}SC_f\xrightarrow{Se(f)}SB\cdots\]
via a morphism $d(f)\in KK(A, SC_f)$ to obtain Puppe's exact sequence for the KK-groups (see Sec.~\ref{doe}, \cite[{Thm. 19.4.3.}]{B}).

%It might be a natural question asking the compatibility of the duality and triangulated structure,
With the duality and the tensor triangulated structure at hand it is a natural question whether and how these two are compatible. We give a partial answer to this question in 
%the following 
our first main theorem of this paper:
\begin{thm}[{Thm. \ref{Mt1}, Cor. \ref{Mt1c}}]
Let $A, B, C$ be separable nuclear UCT $C^*$-algebras with dual algebras $D(A), D(B), D(C)$.
\begin{enumerate}[(1)]
\item 
Let $f : B\to A$ and $D(f) : D(A)\to D(B)$ be $*$-homomorphisms satisfying
\[D_{\mu_B, \nu_A}(KK(f))=KK(D(f))\in KK(D(A), D(B)),\]
then $SC_{D(f)}$ is a dual algebra of the mapping cone $C_f$ and there is a duality class $\mu\in KK(\mathbb{C}, C_f\otimes (SC_{D(f)}))$ satisfying
\[D_{\mu, \nu_B}(KK(e(f)))=d(D(f)).\]
\item For an appropriate choice of duality classes,
Spanier--Whitehead K-duality maps the exact triangle $SA\to C\to B\to A$ to another exact triangle $D(SA)\leftarrow D(C)\leftarrow D(B)\leftarrow D(A)$. 
\end{enumerate}
\end{thm}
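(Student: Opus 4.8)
The plan is to treat Spanier--Whitehead K-duality as a contravariant functor $D$ on the full subcategory of dualizable objects, sending $\alpha\in KK(A,B)$ to $D_{\mu_A,\nu_B}(\alpha)\in KK(D(B),D(A))$, and to show that it is compatible with the suspension and with the exact triangles. The first preliminary step is to fix a natural isomorphism $D(SA)\cong SD(A)$: since the suspension $S$ is its own inverse up to Bott periodicity, the classes $\mu_{SA},\nu_{SA}$ can be produced from $\mu_A,\nu_A$ by tensoring with the Bott and dual Bott elements, which exhibits $SD(A)$ as a dual of $SA$. With this in hand $D$ becomes a contravariant, suspension-intertwining functor, and the hypothesis $D_{\mu_B,\nu_A}(KK(f))=KK(D(f))$ guarantees that the dual of $f$ is represented by the honest $*$-homomorphism $D(f)$, so that the mapping cone algebra $C_{D(f)}$ and its Puppe triangle are actually available.

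For part (1) I would apply $D$ to the mapping cone triangle $C_f\xrightarrow{e(f)}B\xrightarrow{f}A\xrightarrow{d(f)}SC_f$. Contravariance together with the suspension isomorphism turns it into a candidate exact triangle $D(A)\xrightarrow{D(f)}D(B)\to D(C_f)\to SD(A)$, whose middle map is the image of $e(f)$ under the duality isomorphism. On the other hand, the Puppe triangle of the $*$-homomorphism $D(f)\colon D(A)\to D(B)$, after one rotation, reads $D(A)\xrightarrow{D(f)}D(B)\xrightarrow{d(D(f))}SC_{D(f)}\to SD(A)$. Since both are exact triangles on the same first morphism $D(f)$, the uniqueness of the cone in the triangulated KK-category of \cite{MN} yields an isomorphism $D(C_f)\cong SC_{D(f)}$, so that $SC_{D(f)}$ is a dual of $C_f$. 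It then remains to pin down an explicit duality class $\mu\in KK(\C, C_f\otimes SC_{D(f)})$ implementing this dual; I would assemble it from $\mu_A,\mu_B$ and the connecting class $d(f)$ via Kasparov products along the triangle, and then verify the asserted identity $D_{\mu,\nu_B}(KK(e(f)))=d(D(f))$ by a diagram chase using the zig-zag relations for the duality classes and the naturality of the six-term exact sequence.

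Part (2) is then essentially a repackaging of part (1): the exact triangle $SA\to C\to B\to A$ is, up to rotation, the mapping cone triangle of the relevant map with $C\cong C_f$, and choosing the duality class of $C$ to be the class $\mu$ produced above makes $D$ carry this triangle to the rotated Puppe triangle of $D(f)$, namely $D(SA)\leftarrow D(C)\leftarrow D(B)\leftarrow D(A)$, which is therefore exact. In other words, for this coherent choice of duality classes $D$ becomes a contravariant triangulated functor, and the two six-term exact sequences attached to the extension are interchanged.

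The main obstacle I anticipate is not the abstract identification $D(C_f)\cong SC_{D(f)}$, which follows formally from uniqueness of cones, but rather the construction of the explicit class $\mu$ and the verification of the equality $D_{\mu,\nu_B}(KK(e(f)))=d(D(f))$ on the nose, as opposed to merely up to an undetermined automorphism of $SC_{D(f)}$. This forces one to track the Bott and suspension signs carefully and to check that the duality classes are natural with respect to both the mapping cone construction and the boundary map $d(-)$; establishing this compatibility of $\mu$ and $\nu$ with the Puppe sequence is where the genuine work lies.
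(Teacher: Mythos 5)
There is a genuine gap, and it is a circularity at the heart of your argument for part (1). You apply the duality functor $D$ to the mapping cone triangle of $f$ and obtain a ``candidate exact triangle'' $D(A)\xrightarrow{D(f)}D(B)\to D(C_f)\to SD(A)$; you then invoke uniqueness of cones in the triangulated category to conclude $D(C_f)\cong SC_{D(f)}$. But uniqueness of cones only applies when \emph{both} triangles over $D(f)$ are known to be exact, and the exactness of the dualized triangle is precisely the content of the theorem (it is part (2), which you then describe as ``a repackaging of part (1)''). Nothing formal supplies this exactness: $D$ is defined only on dualizable objects via chosen duality classes, the KK-category is not presented here as a closed tensor triangulated category with an exact internal hom, and Thm.~\ref{Mt1} does not even assume $C_f$ is dualizable --- its dualizability (with dual $SC_{D(f)}$) is part of the conclusion. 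So the step ``since both are exact triangles on the same first morphism'' assumes what is to be proved.

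The paper's logic runs in the opposite direction, and this reversal is unavoidable. It first constructs the class $\mu$ by hand: the difference map $\varphi$ of Lem.~\ref{lem:diffmap} kills the pair of suspended duality classes $(I_S\otimes\mu_A, I_S\otimes\mu_B)$, so the Puppe sequence produces a lift $s\mu\in KK(S,C_\varphi)$; the cone $C_\varphi$ is then identified (Lem.~\ref{K} and the reparametrization $r^*$) with the kernel algebra $K_{f,D(f)}\subset C_f\otimes C_{D(f)}$, which yields $\mu\in KK(\mathbb{C}, C_f\otimes SC_{D(f)})$. The identities $D_{\mu,\nu_B}(KK(e(f)))=d(D(f))$ and the companion identity for $i(f)$ are then verified in Lem.~\ref{cii} by explicit commutative diagrams of $*$-homomorphisms on $K_{f,D(f)}$ --- not by formal zig-zag manipulations, which have no object to act on until $\mu$ exists. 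Only after these compatibilities are in place does the Five Lemma applied to the two Puppe sequences show that $\mu\hat{\otimes}$ is a natural isomorphism, whence Lem.~\ref{ci} certifies that $\mu$ is a duality class; exactness of the dual triangle (Cor.~\ref{Mt1c}) is a \emph{consequence} of this, not an input. Your closing remark correctly identifies that pinning down $\mu$ on the nose is where the work lies, but the work is even more fundamental than you suggest: without the explicit construction there is no comparison map between the two Puppe sequences at all, and hence no duality between $C_f$ and $SC_{D(f)}$ to speak of.
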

The construction of the duality class $\mu \in KK(\mathbb{C}, C_f \otimes SC_{D(f)})$ is an adaptation to $C^*$-algebras of the one used in \cite[Lem.~14.31]{Sw}. It is based on a difference map 
\[
    \varphi \colon (SA \otimes D(A)) \oplus (SB \otimes D(B)) \to SA \otimes D(B),
\]
defined in Lem.~\ref{lem:diffmap}, whose mapping cone can be identified with a subalgebra of $C_f \otimes C_{D(f)}$. Since $KK(\varphi)$ kills the pair of (suspended) duality classes for $A$ and $B$, it gives rise to an element $s\mu \in KK(S, C_{\varphi})$, which gives $\mu$ via Bott periodicity and the appropriate identifications.

As an application of the above theorem,
we obtain a categorical picture 
%to understand 
of strong K-theoretic duality for extensions introduced in \cite{M} (see Def. \ref{skd}).
In \cite[{Thm. 1.1.}]{M},
K.~Matsumoto discovered an interesting duality between K-theory and strong extension groups of the Toeplitz extension of Cuntz--Krieger algebras (Rem. \ref{md}).
He introduces the notion of strong K-theoretic duality for unital extensions. Roughly speaking, two extensions $\mathbb{K} \to E \to A$ and $\mathbb{K} \to F \to B$ are dual to one another if the $Ext$-group six-term exact sequence of the first extension is isomorphic to the $K$-theory six-term exact sequence of the second and vice versa. As mentioned above, this duality relates the following Toeplitz extensions of Cuntz--Krieger algebras: 
%which relates the following Toeplitz extensions of the Cuntz--Krierger algebras
\[
	\mathbb{K}\to \mathcal{T}_A\to\mathcal{O}_A,\quad \mathbb{K}\to\mathcal{T}_{A^t}\to\mathcal{O}_{A^t}\ .
\]
It was not known whether there are any other pairs of strongly K-theoretic dual extensions.
In  the second part of this paper,
we show the following theorem.
\begin{thm}[{Thm. \ref{mtcp}}]
Let $A$ be a unital separable nuclear UCT C*-algebras with finitely generated K-groups,
and let $\mathbb{K}\to E\to A$ be a unital essential extension.
Then, there exists a unital separable nuclear UCT C*-algebra $B$ and a unital essential extension $\mathbb{K}\to F\to B$ which is strongly K-theoretic dual to $\mathbb{K}\to E\to A$.
\end{thm}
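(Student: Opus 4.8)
The plan is to encode the extension as exact triangles, apply Spanier--Whitehead $K$-duality via Theorem \ref{Mt1}, and then realise the resulting abstract dual as a genuine unital essential extension.

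\emph{Setting up the triangles.} Since $A$ is nuclear, the essential extension $\mathbb{K}\to E\xrightarrow{\pi}A$ is semisplit and hence determines an exact triangle in the $KK$-category: the extension triangle $\mathbb{K}\to E\xrightarrow{\pi}A\xrightarrow{\partial}S\mathbb{K}$, whose mapping cone $C_\pi$ is $KK$-equivalent to $\mathbb{K}\simeq\mathbb{C}$ and whose class $\partial\in KK^1(A,\mathbb{C})$ is the Busby invariant. Applying $KK(\mathbb{C},-)$ and $KK(-,\mathbb{C})$ to the two triangles attached to the extension reproduces exactly its $K$-theory and $Ext$ six-term sequences. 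I would first record that every algebra in sight is dualisable: $A$ is separable nuclear UCT with finitely generated $K$-groups, and the six-term sequence shows that $E$ is again separable nuclear UCT with finitely generated $K$-groups, so both admit separable UCT duals $D(A),D(E)$ by \cite{KS}, with finitely generated $K$-homology.

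\emph{Dualising.} By Theorem \ref{Mt1}, Spanier--Whitehead $K$-duality carries the extension triangle of $E\to A$ to an exact triangle assembled from $D(A)$, $D(E)$ and $D(\mathbb{K})\simeq\mathbb{K}$, and it interchanges the extension triangle with the mapping cone triangle. Up to the Bott and suspension identifications supplied by Theorem \ref{Mt1}, the dual triangle again has $\mathbb{K}$ as its cone, so it has precisely the shape of the mapping cone triangle of an extension by $\mathbb{K}$ in which the roles of total space and quotient are interchanged. This forces the $K$-theory of the dual extension to be the $K$-homology of the data $E\to A$ (with a degree shift governed by Bott periodicity, so that the quotient carries the $K$-theory of $SD(E)$ and the total space that of $SD(A)$), and it pins down a candidate dual Busby class.

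\emph{Realising, the main obstacle.} The genuine difficulty is to pass from this $KK$-theoretic datum to an honest unital essential extension, since the duals produced by \cite{KS} are defined only up to $KK$-equivalence and need not be unital. I would replace the abstract quotient by a $KK$-equivalent unital separable nuclear UCT algebra $B$: because the relevant $K$-groups are finitely generated, the classification of Kirchberg algebras (cf. \cite{RS}) lets me take $B$ to be a unital UCT Kirchberg algebra with the prescribed $K$-theory. Transporting the candidate Busby class along this $KK$-equivalence gives an element of $\mathrm{Ext}(B)=KK^1(B,\mathbb{C})$; since $B$ is nuclear this group consists entirely of semisplit, hence realisable, classes, and I would realise it as a unital essential extension $\mathbb{K}\to F\to B$, using that $B$ is unital for unitality and absorbing a trivial essential extension via Voiculescu's theorem to ensure essentiality without changing the class. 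The quotient $F$ then automatically has the required $K$-theory. The delicate point is to arrange this realisation so that the two triangles of $\mathbb{K}\to F\to B$ agree, on the nose and with the correct parity after Bott periodicity, with the dual triangles of $E\to A$, rather than merely abstractly.

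\emph{Conclusion.} Once the realisation is in place, Theorem \ref{Mt1}(2) shows that the duality intertwines the extension triangle of each extension with the mapping cone triangle of the other, together with their six-term exact sequences. Hence the $Ext$ six-term sequence of $\mathbb{K}\to E\to A$ is isomorphic to the $K$-theory six-term sequence of $\mathbb{K}\to F\to B$ and conversely, which is exactly strong $K$-theoretic duality in the sense of Definition \ref{skd}; specialising the construction to the Cuntz--Krieger--Toeplitz extensions should recover Matsumoto's pairing.
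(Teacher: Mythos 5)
Your proposal contains a genuine and fatal gap: you dualise the wrong triangle. Applying $KK(-,\mathbb{C})$ to the extension triangle $SA\to \mathbb{K}\to E\xrightarrow{\pi} A$ produces the six-term sequence of the \emph{weak} group $Ext_w(A)\cong Ext(A,\mathbb{K})$, so the extension you build from its dual (quotient $\sim_{KK} SD(E)$, total space $\sim_{KK} SD(A)$, as you state) can only ever match that weak sequence. Definition~\ref{skd}, however, asks for an isomorphism with the \emph{strong} sequence, built from $Ext_s(A)$, $K_1(\mathfrak{D}(A))$ and $\iota_A\colon K_1(\mathcal{Q}(\mathbb{K}))\to Ext_s(A)$, together with the matching of distinguished classes $\Phi_A([E]_s)=\epsilon[1_F]_0$ and $\Phi_B([F]_s)=\epsilon[1_E]_0$. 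This data is invisible to the triangle $\mathbb{K}\to E\to A$: it lives on the mapping cones of the unit maps, via $Ext_s(A)\cong KK(C_{u_A},\mathbb{C})$ (Cor.~\ref{dDd}), which is why the paper instead dualises the cone sequence $C_{\xi_E}\to\mathbb{K}+\mathbb{C}1_E\xrightarrow{\xi_E}E$ of the unital inclusion, using Prop.~\ref{keyp2} to see that the single class $KK(e(\xi_E))$ encodes both $[E]_s$ and the unit class. The discrepancy is not cosmetic: strong duality forces $F\sim_{KK}D(C_{u_A})$ and $B\sim_{KK}SD(A)$, not your pair $(SD(A),SD(E))$. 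Concretely, take $A=\mathcal{O}_2$ and any unital essential extension $\mathbb{K}\to E\to\mathcal{O}_2$; then $E\sim_{KK}\mathbb{C}$, and since $K_*(\mathcal{O}_2)=0$ the cone sequence gives $C_{u_{\mathcal{O}_2}}\sim_{KK}\mathbb{C}$, hence $Ext_s(\mathcal{O}_2)\cong KK(C_{u_{\mathcal{O}_2}},\mathbb{C})\cong\mathbb{Z}$ while $Ext_w(\mathcal{O}_2)=0$ and $K^*(\mathcal{O}_2)=0$. Definition~\ref{skd} therefore demands $K_0(F)\cong\mathbb{Z}$ and $K_*(B)=0$, whereas your recipe yields a total space with $K_0\cong K_0(SD(\mathcal{O}_2))=0$ and a quotient with $K_1\cong K_1(SD(E))\cong\mathbb{Z}$; both diagrams of Definition~\ref{skd} fail, so the step ``this forces the $K$-theory of the dual extension\dots'' is exactly where the argument breaks.

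There are two further gaps even after correcting the target. First, realisation: finding $B$ is not a matter of prescribing $K_*(B)$ via Kirchberg--Phillips classification. What must be prescribed is the \emph{pointed} $K$-homology of the unit cone, namely $(Ext_s(B),\iota_B([W]_1),K_1(\mathfrak{D}(B)))\cong(K_0(E),[e]_0,K_1(E))$, i.e.\ a unital $B$ whose cone $C_{u_B}$ is Spanier--Whitehead dual to $E$ with the evaluation class $KK(e(u_B))$ sitting over $[e]_0$. The existence of such a $B$ is precisely Sogabe's reciprocality theorem \cite[Thm.~3.3]{S}, invoked in Lem.~\ref{exB}; it is a nontrivial existence statement that does not follow from classification with prescribed $K$-groups. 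Second, the symmetric requirement that \emph{both} conditions $\Phi_B([F]_s)=\epsilon[1_E]_0$ and $\Phi_A([E]_s)=\epsilon[1_F]_0$ hold simultaneously --- the ``delicate point'' you flag but leave open --- is resolved in the paper by Prop.~\ref{keyp1}, which converts each condition into the existence of a duality class intertwining $KK(e(\xi_F))$ with $KK(\xi_E)$, together with Cor.~\ref{Sed}, which flips such a class for one direction into one for the other; Cor.~\ref{Mt1c} alone (exactness of the dual triangle) cannot deliver this matching of distinguished elements.
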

We also show that strong K-theoretic duality can be understood as the Spanier--Whitehead duality of the following mapping cone sequences (Thm. \ref{mtcp} (2))
\[
\begin{tikzcd}
	C_{\xi_E} \ar{r}{e(\xi_E)} & \mathbb{K}+\mathbb{C}1_E \arrow[right hook->]{r}{\xi_E} & E, & F & \arrow[left hook->]{l}[above]{\xi_F} \mathbb{C}1_F+\mathbb{K} & \ar{l}[above]{e(\xi_F)} C_{\xi_F}\ .
\end{tikzcd}
\]
A key ingredient in the proof is the observation that all KK-theoretic information about the extension $\mathbb{K} \to E \to A$, i.e.\ its class in $Ext_s(A)$ and the $K$-theory class of the unit, is in fact encapsulated in the single KK-class $KK(e(\xi_E))$, where $e(\xi_E)$ is the evaluation map of the mapping cone $C_{\xi_E}$ (see Prop.~\ref{keyp2} proven in the appendix). To see this, we need the isomorphism  
\[
    \Psi_A \colon Ext_s(A) \to KK(C_{u_A}, \mathbb{C}),
\]
which is carefully defined in Cor.~\ref{dDd} in such a way that it maps the image of the generator of $K_1(\mathcal{Q}(K)) \cong \Z$ in $Ext_s(A)$ to the $KK$-class of the evaluation map on the cone. An interesting feature of strong K-theoretic duality revealed in this picture is that it interchanges the roles of $\mathbb{C}$ and $\mathbb{K}$ under the duality. The dual extension can then be constructed by first replacing $E$ by a KK-equivalent Kirchberg algebra $R$ and then using reciprocality defined in \cite{S}. In fact, the mapping cone $C_{u_B}$ of the reciprocal algebra $B$ is a dual of $R \sim_{KK} E$. Using the inverse of $\Psi_B$ it is then easy to check, for example, that $B$ has the correct extension groups.

%\vspace{2mm}
The paper is structured as follows: We fix some notation used throughout the paper in Section~2.

We then recall some basic facts about KK-groups and exact triangles in KK at the beginning of Section~3. We continue with a discussion of Spanier--Whitehead K-Duality (see Def.~\ref{dsw}) and its properties. The main focus of Sec.~3 is the construction of the difference map $\varphi$ in Lem.~\ref{lem:diffmap}, the identification of its mapping cone $C_{\varphi}$ (Lem.~\ref{K}), the construction of the duality classes in Lem.~\ref{cii} and finally the proof of Lem.~\ref{Mt1} and Cor.~\ref{Mt1c}.

In Section~4 we first state some well-known theorems about (strong and weak) extension groups, Busby invariants and the six-term exact sequence of extension groups. We also recall the identification of the strong extensions of $A$ with $Ext(C_{u_A}, S\mathbb{K})$. In the rest of this section we then construct the isomorphism $\Psi_A$ mentioned above in Cor.~\ref{dDd}.

The second main result is discussed in Section~5. Some of the proofs are deferred to the appendix in Section~6. We recall  the definition of (strong) K-theoretic duality for unital extensions (see Def.~\ref{skd}) at the beginning. As mentioned above, Prop.~\ref{keyp2} and Prop.~\ref{keyp1} provide an interpretation of the duality in terms of cones, which is then used in the proof of the main result, Thm.~\ref{mtcp}. The construction of the dual extension using reciprocality can be found in Lem.~\ref{exB}.

\section*{Acknowledgements}
This work was supported by the Research Institute for Mathematical Sciences,
an International Joint Usage/Research Center located in Kyoto University.
T. Sogabe is supported by Research Fellow of the Japan Society for the Promotion of Science.
%vspace{1cm}

\section{Notation}
Throughout the paper the letters $A, B, C$ denote separable nuclear UCT C*-algebras.
If $A$ is unital, then we denote by $1_A\in A$ its unit and write 
$u_A : \mathbb{C}\ni \lambda\mapsto \lambda 1_A\in A.$
Let $\mathbb{K}$ (resp.~$\mathbb{M}_n$) denote the algebra of compact operators acting on a separable infinite dimensional (resp. $n$-dimensional) Hilbert space,
and let $e\in \mathbb{K}$ denote a rank 1 projection.
We also write $e : \mathbb{C}\ni\lambda\mapsto \lambda e\in\mathbb{K}$ for the corresponding $*$-homomorphism by abuse of notation.
We denote by $\mathcal{M}(B\otimes\mathbb{K})$ the multiplier algebra of $B\otimes\mathbb{K}$ and write $\mathcal{Q}(B\otimes\mathbb{K}):=\mathcal{M}(B\otimes\mathbb{K})/(B\otimes\mathbb{K})$.
The quotient map $\mathcal{M}(B\otimes\mathbb{K})\to\mathcal{Q}(B\otimes\mathbb{K})$ is denoted by $\pi$ for short.
We denote by
\[\sigma_{A, B} : A\otimes B\ni a\otimes b\mapsto b\otimes a\in  B\otimes A\]
the flip isomorphism.

We denote by $K_i(A), i=0, 1$ the $i$-th K-group of $A$ and write $[p]_0\in K_0(A)$ (resp. $[u]_1\in K_1(A)$) the equivalence class of the projection $p$ (resp. unitary $u$).
We identify $K_0(\mathbb{K})$ and $K_1(\mathcal{Q}(\mathbb{K}))$ with $\mathbb{Z}$ as follows
\begin{align*}
&K_0(\mathbb{K})\ni [e]_0\mapsto 1\in \mathbb{Z},\\
&K_1(\mathcal{Q}(\mathbb{K}))\ni [\pi (V)]_1\mapsto {\rm Index} (V)\in\mathbb{Z},
\end{align*}
where $V\in \mathcal{M}(\mathbb{K})$ is a Fredholm operator and we write ${\rm Index} (V):={\rm dim\; Ker}V-{\rm dim\; Coker}V.$
We denote the index map by
\[{\rm Ind} : K_1(\mathcal{Q}(\mathbb{K}))\ni [\pi(V)]_1\mapsto {\rm Index} (V)\in K_0(\mathbb{K}).\]

Let $S=C_0(0, 1)$ be the algebra of continuous functions on $[0, 1]$ vanishing at the boundary.
For a $*$-homomorphism $f : B\to A$,
we write $S^nA:=S^{\otimes n}\otimes A, \; S^nf:={\rm id}_{S^n}\otimes f$.
A function $[0, 1]\ni t\mapsto a(t)\in A$ vanishing at $\{0, 1\}$ is an element of $SA$ and denoted by $a(t)\in SA$ by abuse of notation.
The mapping cone algebra $C_f$ of $f$ is defined by
\[C_f:=\{(a(t), b)\in (C_0(0, 1]\otimes A)\oplus B\; |\; a(1)=f(b)\},\]
and it fits into an exact sequence
\[0\to SA\xrightarrow{i(f)}C_f\xrightarrow{e(f)} B\to 0,\]
where the two maps $i(f)$ and $e(f)$ are given by
\[ i(f) : SA\ni a(t)\mapsto (a(t), 0)\in C_f,\quad e(f) : C_f\ni (a(t), b)\mapsto b\in B.\] 
For an extension
\[0\to J\to E\xrightarrow{f} E/J\to 0,\]
we write
$j(E) : J\ni x\mapsto (0, x)\in C_f.$

%%%%%%%%%%%%%%%%%%%%%%%%%%%%%%%%%%%%%%%%%%%%%%%%%%%%%%%%%%%%%%%%%%%%%%%%%%%%%%%%%%%%%%%%%%%%%%%%%%%%%%%%%%%%%%%%%%%%%%%%%%%%%%%%%%%%%%%%%%%%%%%%%%%%%%%%%%%%%%%%%%%%%%%%%%%%%%%%%%%%%%%%
\section{Exactness of Spanier--Whitehead K-duality}
In this section, we will show that, for an appropriate choice of duality classes, Spanier-Whitehead K-duality maps an exact triangle to another exact triangle (see Thm. \ref{Mt1}, Cor. \ref{Mt1c}).

\subsection{KK-groups and exact triangles}
We refer to \cite{B} for the basic definition and facts about KK-theory.
For two C*-algebras $A, B$,
Kasparov's KK-group is denoted by $KK(A, B)$.
The Kasparov module given by a $*$-homomorphism $f : A\to B$ is denoted by $KK(f)\in KK(A, B)$, and for two $*$-homomorhisms $f : A\to B$, $g : B\to C$, their Kasparov product is denoted by
\[\hat{\otimes} : KK(A, B)\times KK(B, C)\ni (KK(f), KK(g))\mapsto KK(f)\hat{\otimes}KK(g)=KK(g\circ f)\in KK(A, C).\]
%Note that Kasparov product is defined for the general Kasparov modules which are not genuine $*$-homomorphisms.
We write $I_A:=KK({\rm id}_A)\in KK(A, A)$, and one has natural maps
\begin{align*}
I_A\otimes - &: KK(B, C)\ni KK(g)\mapsto I_A\otimes KK(g)=KK({\rm id}_A\otimes g)\in KK(A\otimes B, A\otimes C), \\
-\otimes I_A &: KK(B, C)\ni KK(g)\mapsto KK(g)\otimes I_A=KK(g\otimes {\rm id}_A)\in KK(B\otimes A, C\otimes A).	
\end{align*}
The following identities are consequences of the definition of Kasparov modules:
%By the definition of Kasparov module, one has the following equations
\begin{enumerate}
\item $I_A\otimes b =(b\otimes I_A)\hat{\otimes} KK(\sigma_{B, A})\in KK(A, A\otimes B),\quad b\in KK(\mathbb{C}, B)$,
\item $I_A\otimes c=KK(\sigma_{A, C})\hat{\otimes}(c\otimes I_A)\in KK(A\otimes C, A),\quad c\in KK(C, \mathbb{C})$,
\item $(a\otimes I_C)\hat{\otimes}(I_B\otimes c)=(I_A\otimes c)\hat{\otimes}(a\otimes I_D),\quad a\in KK(A, B),\; c\in KK(C, D)$,
\end{enumerate}
and will be used in this paper without mentioning.

We denote by $KK$ the category of separable C*-algebras whose morphism set $Mor(A, B)$ is $KK(A, B)$,
and the composition of the morphisms is given by $\hat{\otimes}$.
A morphism $\alpha \in KK(A, B)$ is called a KK-equivalence if there exists $\alpha^{-1}\in KK(B, A)$ satisfying $\alpha\hat{\otimes}\alpha^{-1}=I_A,\; \alpha^{-1}\hat{\otimes}\alpha=I_B$ and we denote by $KK(A, B)^{-1}$ the subset of KK-equivalences.
If $KK(A, B)^{-1}\not=\emptyset$, $A$ and $B$ are called KK-equivalent.

In \cite{MN},
R. Meyer and R. Nest showed that  $KK$ is triangulated.
A sequence 
\[S\tilde{A}\to \tilde{C}\to \tilde{B}\to \tilde{A}\]
in $KK$ is called an exact triangle if there is a $*$-homomorphism $f : B\to A$ and $KK$-equivalences
$\alpha \in KK(\tilde{A}, A)^{-1}$, $\beta\in KK(\tilde{B}, B)^{-1}$, $\gamma\in KK(\tilde{C}, C_f)^{-1}$ making the following diagram commute:

\[
\begin{tikzcd}
	S\tilde{A}\ar[r]\ar[d,"I_S\otimes\alpha"] & \tilde{C}\ar[r]\ar[d,"\gamma"] &\tilde{B}\ar[d,"\beta"]\ar[r]&\tilde{A}\ar[d,"\alpha"]\\
SA\ar[r,"i(f)" below]&C_f\ar[r,"e(f)" below] &B\ar[r,"f" below] &A.
\end{tikzcd}
\]
\begin{lem}[{\cite[{Sec. 2.1, Appendix}]{MN}}]\label{mn}
For two exact triangles $SA_i\to C_i\to B_i\to A_i, \;i=1, 2$ fitting into the commutative diagram in $KK$ shown below
\[
\begin{tikzcd}
SA_1\ar[r]\ar[d,"I_S\otimes \alpha"]&C_1\ar[r]&B_1\ar[r]\ar[d,"\beta"] &A_1\ar[d,"\alpha"]\\
SA_2\ar[r]&C_2\ar[r]&B_2\ar[r]&A_2,
\end{tikzcd}
\]
we have $\gamma\in KK(C_1, C_2)$ making the above diagram commute.
If $\alpha, \beta$ are KK-equivalences, then so is $\gamma$.
\end{lem}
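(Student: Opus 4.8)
The plan is to derive both assertions from the triangulated structure on $KK$ recalled above from \cite{MN}, in which the exact triangles are precisely the mapping-cone triangles $SA \to C_f \to B \to A$.

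First I would produce the filler $\gamma$ from the rotation and completion (morphism) axioms of a triangulated category. The given diagram records a commutative square between the base maps $f_1 : B_1 \to A_1$ and $f_2 : B_2 \to A_2$ of the two exact triangles together with $\beta$ on $B$ and $\alpha$ on $A$; the left-hand vertical map $I_S \otimes \alpha = S\alpha$ is the suspension of $\alpha$, as dictated by functoriality of $S$. Rotating each exact triangle to its Puppe form $B \to A \xrightarrow{d(f)} SC \to SB$ puts this square at the front, so the completion axiom yields a morphism $SC_1 \to SC_2$ of the rotated triangles; since $S$ is an autoequivalence of $KK$, this map is $S\gamma$ for a unique $\gamma \in KK(C_1, C_2)$, and unwinding the rotation shows that $\gamma$ makes the original diagram commute. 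Here I would simply invoke \cite{MN}, the only care being the matching of the suspension/rotation conventions (with $A \cong S^2A$ via Bott periodicity).

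For the second assertion I would use the triangulated five lemma. Fix an arbitrary separable $C^*$-algebra $D$ and apply the homological functor $KK(D, -)$, which sends each exact triangle to its long (periodic, six-term) exact sequence. The morphism of triangles then produces a commutative ladder
\[
\begin{tikzcd}[column sep=small]
\cdots \ar[r] & KK(D, SA_1) \ar[r]\ar[d] & KK(D, C_1) \ar[r]\ar[d,"\gamma_*"] & KK(D, B_1) \ar[r]\ar[d] & KK(D, A_1) \ar[r]\ar[d] & \cdots \\
\cdots \ar[r] & KK(D, SA_2) \ar[r] & KK(D, C_2) \ar[r] & KK(D, B_2) \ar[r] & KK(D, A_2) \ar[r] & \cdots
\end{tikzcd}
\]
whose vertical maps are induced by $I_S \otimes \alpha$, $\gamma$, $\beta$, $\alpha$ and their suspensions. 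As $\alpha$ and $\beta$ are KK-equivalences, every vertical map except possibly $\gamma_*$ is an isomorphism, so the five lemma forces $\gamma_* : KK(D, C_1) \to KK(D, C_2)$ to be an isomorphism.

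Finally, since $D$ is arbitrary, $\gamma$ induces a natural isomorphism of the representable functors $KK(-, C_1)$ and $KK(-, C_2)$; by the Yoneda lemma $\gamma$ is then an isomorphism in $KK$, i.e.\ a KK-equivalence. (Concretely, $\gamma_*^{-1}(I_{C_2}) \in KK(C_2, C_1)$ is the required inverse, with naturality giving the second identity.) The only real obstacle is bookkeeping rather than mathematics: one must align the rotation and suspension conventions so that the diagram genuinely matches the completion axiom, and confirm that $KK(D, -)$ is homological for these triangles — both of which are contained in the triangulated structure of \cite{MN}. Beyond that, the argument is the standard five-lemma-plus-Yoneda deduction and I expect no genuine difficulty.
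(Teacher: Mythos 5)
Your proposal is correct. The paper itself gives no proof of this lemma---it is stated purely as a citation to \cite[Sec.~2.1, Appendix]{MN}---and your argument is exactly the standard triangulated-category reasoning that the citation delegates to: the completion axiom (TR3) after rotating the triangles into Puppe form produces the filler $\gamma$, and the long exact sequences of $KK(D,-)$ together with the five lemma and Yoneda give that $\gamma$ is a KK-equivalence when $\alpha$ and $\beta$ are. The only delicate points, which you correctly flag, are the sign and suspension conventions: the signs introduced by rotation appear in both triangles simultaneously and therefore cancel in the commutativity conditions, and the identification $A \cong S^2A$ via Bott periodicity lets you desuspend the filler $SC_1 \to SC_2$ to the required $\gamma \in KK(C_1, C_2)$, so the bookkeeping goes through as you expect.
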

%The algebra
%\[C_f :=\{(a(t), b)\in (C_0(0, 1]\otimes A)\oplus B\; |\; a(1)=f(b)\}\]
%is called mapping cone and two maps $e_f$ and $i_f$ are given by
%\[e_f : C_f\ni (a(t), b)\mapsto b\in B,\quad i_f : SA\ni a(t)\mapsto (a(t), 0)\in C_f.\]

%%%%%%%%%%%%%%%%%%%%%%%%%%%%%%%%%%%%%%%%%%%%%%%%%%%%%%%%%%%%%%%%%%%%%%%%%%%%%%%%%%%%%%%%%%%%%%%%%%%%%%%%%%%%%%%%%%%%%%%%%%%%%%%%%%%%%%%%%%%%%%%%%%%%%%%%%%%%%%%%%%%%%%%%%%%%%%%%%%%%%%%%
\subsection{Spanier--Whitehead K-duality}\label{sd}
%For the $KK$-group $KK(-, -)$, one can define a duality $D(-)$ (Definition \ref{}, Theorem \ref{}) so that the following isomorphism holds:
%\[K^i(A)\cong KK(S^{\otimes i}A, \mathbb{C})\cong KK(S^{\otimes i}, D(A))\cong K_i(D(A)),\]
%where the right hand side is so called K-homology of $A$.
In this section we recall the definition of Spanier--Whitehead K-duality and the existence of duals for C*-algebras with finitely generated K-groups following \cite{KS}.
%We introduce the Spanier--Whitehead duality following \cite{KS}.
\begin{dfn}[{\cite[Def. 2.1]{KS}}]\label{dsw}
Let $A$ and $D(A)$ be separable C*-algebras.
They are Spanier--Whitehead K-dual if and only if there are elements, called duality classes,
\[\mu_A \in KK(\mathbb{C}, A\otimes D(A)), \quad \nu_A \in KK(D(A)\otimes A, \mathbb{C})\]
satisfying the unit co-unit adjunction formula
\[(\mu_A \otimes I_A)\hat{\otimes} (I_A\otimes \nu_A)=I_A\in KK(A, A),\quad (I_{D(A)}\otimes\mu_A)\hat{\otimes}(\nu_A\otimes I_{D(A)})=I_{D(A)}\in KK(D(A), D(A)).\]
\end{dfn}
\begin{rem}
We frequently denote by $D(A)$ a dual algebra of $A$. It should be noted, however, that this notation is misleading. There is in general no way to determine $D(A)$ from $A$ up to isomorphism, only up to KK-equivalence.
%but this notation is not so good because there is  in general no way to determine $D(A)$ canonically for a given $A$.
Let $A$ and $D$ be dual with the duality classes $\mu\in KK(\mathbb{C}, A\otimes D)$ and $\nu\in KK(D\otimes A, \mathbb{C})$.
If there is another algebra $\tilde{D}$ with a KK-equivalence $\gamma \in KK(D, \tilde{D})^{-1}$, $\tilde{D}$ is also a dual of $A$ with the following duality classes
\[(\mu\hat{\otimes}(I_A\otimes\gamma),  (\gamma^{-1}\otimes I_A)\hat{\otimes}\nu).\]
Duality classes $(\mu, \nu)$ are determined up to $KK(A, A)^{-1}\cong KK(D(A), D(A))^{-1}$ as in the above manner (see for example \cite[Lem. 2.10]{S}).
If we fix one of the duality classes then the other is uniquely determined.
So if $A$ and $D$ are dual and $(\mu, \nu_i),\; i=1, 2$ are duality classes, then one has $\nu_1=\nu_2\in KK(D\otimes A, \mathbb{C})$.
\end{rem}
We have the following characterization of the duality.
\begin{lem}\label{ci}
Let $A, D, P, Q$ be separable nuclear C*-algebras, and let $\mu \in KK(\mathbb{C}, A\otimes D)$ be an element such that the natural transformation
\[\mu\hat{\otimes} : KK(P\otimes A, Q)\ni x\mapsto (I_P\otimes \mu)\hat{\otimes}(x\otimes I_D)\in KK(P, Q\otimes D)\]
is an isomorphism for any $P, Q$.
Then, there exists $\nu\in KK(D\otimes A, \mathbb{C})$,
and $A$ and $D$ are the Spanier--Whitehead K-dual with duality classes $(\mu, \nu)$.
\end{lem}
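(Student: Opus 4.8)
The plan is to manufacture $\nu$ by a universal property and then to verify the two unit--counit identities of Def.~\ref{dsw} one at a time. Conceptually the hypothesis says exactly that $\mu$ is the $\mathbb{C}$-component of the unit of an adjunction $-\otimes A\dashv -\otimes D$ on $KK$; the class $\nu$ will then be the $\mathbb{C}$-component of the counit, and the two identities of Def.~\ref{dsw} are precisely the two triangle identities of that adjunction. To get $\nu$, I would specialize the hypothesis to $P=D$, $Q=\mathbb{C}$, where
\[\mu\hat\otimes\colon KK(D\otimes A,\mathbb{C})\to KK(D,D),\qquad x\mapsto(I_D\otimes\mu)\hat\otimes(x\otimes I_D),\]
is an isomorphism. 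In particular it is surjective, so I define $\nu\in KK(D\otimes A,\mathbb{C})$ to be the unique preimage of $I_D$. By construction this is the second identity
\[(I_D\otimes\mu)\hat\otimes(\nu\otimes I_D)=I_D,\]
so the entire remaining task is the first identity $(\mu\otimes I_A)\hat\otimes(I_A\otimes\nu)=I_A$.

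For the first identity I would use injectivity of $\mu\hat\otimes$ at the opposite extreme $P=\mathbb{C}$, $Q=A$, where it sends $x\in KK(A,A)$ to $\mu\hat\otimes(x\otimes I_D)\in KK(\mathbb{C},A\otimes D)$ and is an isomorphism. Setting $\xi:=(\mu\otimes I_A)\hat\otimes(I_A\otimes\nu)\in KK(A,A)$, it then suffices to show that $\xi$ and $I_A$ have the same image under this map. The image of $I_A$ is $\mu\hat\otimes(I_A\otimes I_D)=\mu\hat\otimes I_{A\otimes D}=\mu$, while the image of $\xi$, by functoriality of $-\otimes I_D$, is
\[\mu\hat\otimes(\xi\otimes I_D)=\mu\hat\otimes(\mu\otimes I_{A\otimes D})\hat\otimes(I_A\otimes\nu\otimes I_D).\]

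The crux is to simplify the right-hand side so that the second identity applies. The decisive step is the flip relation~(1), which yields $\mu\hat\otimes(\mu\otimes I_{A\otimes D})=\mu\hat\otimes(I_{A\otimes D}\otimes\mu)$: both sides equal the external product $\mu\otimes\mu\in KK(\mathbb{C},(A\otimes D)^{\otimes 2})$, which is invariant under the flip $\sigma_{A\otimes D,\,A\otimes D}$ of its two identical tensor factors. With $\mu$ now on the right the middle term reads $I_A\otimes(I_D\otimes\mu)$, and feeding it into $I_A\otimes(\nu\otimes I_D)$ gives, by functoriality of $I_A\otimes-$,
\[(I_{A\otimes D}\otimes\mu)\hat\otimes(I_A\otimes\nu\otimes I_D)=I_A\otimes\big((I_D\otimes\mu)\hat\otimes(\nu\otimes I_D)\big)=I_A\otimes I_D=I_{A\otimes D},\]
the middle equality being the second identity secured above. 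Substituting back, the image of $\xi$ is $\mu\hat\otimes I_{A\otimes D}=\mu$, equal to that of $I_A$; injectivity then forces $\xi=I_A$, which is the first identity.

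I expect the only genuinely delicate point to be the tensor-factor bookkeeping in this last step. Keeping $\mu$ on the left in $\mu\otimes I_{A\otimes D}$ produces an expression of ``first-identity shape'', in which the contraction by $\nu$ pairs the freshly created block with the one already present, so that applying it directly would require the very identity we are trying to prove; the flip reorders the two $A\otimes D$-blocks into ``second-identity shape'', where the known identity for $D$ applies. Getting the flips $\sigma_{A\otimes D,\,A\otimes D}$ and their invariance on $\mu\otimes\mu$ exactly right, together with the careful use of identities~(1)--(3) to justify the reorderings, is where the care is needed; everything else is a formal consequence of the two specializations of the isomorphism hypothesis and the defining property of $\nu$.
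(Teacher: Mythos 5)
Your proposal is correct and takes essentially the same route as the paper's proof: define $\nu:=(\mu\hat{\otimes})^{-1}(I_D)$ via the specialization $P=D$, $Q=\mathbb{C}$, then use injectivity of the specialization $P=\mathbb{C}$, $Q=A$ to force $(\mu\otimes I_A)\hat{\otimes}(I_A\otimes \nu)=I_A$. The only difference is that you spell out the computation $\mu\hat{\otimes}(\xi\otimes I_D)=\mu$, which the paper compresses into the phrase ``the above equation implies''; note also that your key interchange $\mu\hat{\otimes}(\mu\otimes I_{A\otimes D})=\mu\hat{\otimes}(I_{A\otimes D}\otimes\mu)$ is an immediate instance of the paper's identity (3) (with $a=c=\mu$), so the detour through flip-invariance of $\mu\otimes\mu$ is not needed.
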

\begin{proof}
Since $\mu\hat{\otimes} : KK(D\otimes A, \mathbb{C})\cong KK(D, D)$,
the element $\nu:=(\mu\hat{\otimes})^{-1}(I_D)$ satisfies
\[(I_D\otimes \mu)\hat{\otimes}(\nu\otimes I_D)=I_D,\]
and the above equation implies
\[\mu\hat{\otimes} : KK(A, A)\ni (\mu\otimes I_A)\hat{\otimes}(I_A\otimes \nu)\mapsto \mu\in KK(\mathbb{C}, A\otimes D).\]
We also have
\[\mu\hat{\otimes} : KK(A, A)\ni I_A\mapsto \mu\in KK(\mathbb{C}, A\otimes D),\]
and the assumption shows $(\mu\otimes I_A)\hat{\otimes}(I_A\otimes \nu)=I_A$.
\end{proof}
%We should mention that, for a general C*-algebra $A$,
%it is not easy to find a separable dual algebra $D(A)$,
%and there seems to be no canonical way to construct $D(A)$.
%However,
We have an easy picture to understand the duality if we focus on the UCT C*-algebras with finitely generated K-groups as in the following theorem.
\begin{thm}[{c.f. \cite[{Thm. 3.1, 6.2}]{KS}}]\label{ks}
Let $A$ be a separable nuclear UCT C*-algebra with finitely generated K-groups.
Then, $A$ is KK-equivalent to an algebra of the following form
\[\mathbb{C}^{\oplus a}\oplus S^{\oplus b}\oplus \mathcal{O}_{n+1}^{\oplus c}\oplus (S\mathcal{O}_{m+1})^{\oplus d}\oplus\cdots\]
and a dual algebra $D(A)$ is given by
\[D(A):=\mathbb{C}^{\oplus a}\oplus S^{\oplus b}\oplus (S\mathcal{O}_{n+1})^{\oplus c}\oplus \mathcal{O}_{m+1}^{\oplus d}\oplus\cdots.\]
\end{thm}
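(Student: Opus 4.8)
The plan is to reduce both assertions — that $A$ is KK-equivalent to the displayed model and that the displayed $D(A)$ is a dual — to computations in K-theory, exploiting that every algebra in sight lies in the bootstrap (UCT) class and that, by the UCT, a KK-equivalence between such algebras is detected by an isomorphism of graded K-groups. Since the building blocks $\mathbb{C}$, $S$, $\mathcal{O}_{n+1}$, and $S\mathcal{O}_{m+1}$ are all UCT algebras, any finite direct sum of them is again UCT, so the UCT applies to all the pairs we need to compare.

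For the first assertion I would record the K-theory of the building blocks, namely $K_*(\mathbb{C}) = (\Z, 0)$, $K_*(S) = (0, \Z)$, $K_*(\mathcal{O}_{n+1}) = (\Z/n, 0)$, and $K_*(S\mathcal{O}_{m+1}) = (0, \Z/m)$, the last two obtained from the Cuntz-algebra computation $K_0(\mathcal{O}_{n+1}) = \Z/n$ and a suspension shift. Because $K_0(A)$ and $K_1(A)$ are finitely generated, each decomposes as a finite direct sum of copies of $\Z$ and of finite cyclic groups; choosing the multiplicities $a, b, c, d, \dots$ to match the free ranks and the cyclic torsion orders in the respective degrees produces a model algebra of the displayed form whose graded K-theory is isomorphic to that of $A$. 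The UCT then lifts this isomorphism to a class in $KK$ inducing it on K-theory, and such a class is automatically a KK-equivalence by the five lemma applied to the UCT exact sequence.

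For the second assertion I would first observe that Spanier--Whitehead K-duality is compatible with KK-equivalence (as in the Remark after Def.~\ref{dsw}) and with finite direct sums: if $A_i$ is dual to $D_i$ with classes $(\mu_i, \nu_i)$, then $\bigoplus_i A_i$ is dual to $\bigoplus_i D_i$ via the diagonal classes, the off-diagonal summands $A_i \otimes D_j$ with $i \neq j$ contributing nothing to the adjunction. This reduces the claim to the building blocks. Here $\mathbb{C}$ is self-dual with $\mu = \nu = I_{\mathbb{C}}$, and $S$ is self-dual with $\mu_S$ and $\nu_S$ the Bott and Dirac elements in $KK(\mathbb{C}, S^2)$ and $KK(S^2, \mathbb{C})$, the adjunction formulas of Def.~\ref{dsw} being precisely Bott periodicity. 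The substantial case is that $\mathcal{O}_{n+1}$ is dual to $S\mathcal{O}_{n+1}$: by the K\"unneth theorem $KK(\mathbb{C}, \mathcal{O}_{n+1} \otimes S\mathcal{O}_{n+1}) = K_0(\mathcal{O}_{n+1} \otimes S\mathcal{O}_{n+1}) \cong \operatorname{Tor}(\Z/n, \Z/n) \cong \Z/n$, and I would take $\mu$ to be a generator and produce the duality classes $(\mu, \nu)$ through Lem.~\ref{ci}.

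The main obstacle is exactly this last verification. To apply Lem.~\ref{ci} one must show that $\mu\hat{\otimes}$ is an isomorphism $KK(P \otimes \mathcal{O}_{n+1}, Q) \to KK(P, Q \otimes S\mathcal{O}_{n+1})$ for all separable nuclear $P, Q$. By naturality in $P$ and $Q$ together with the UCT, this reduces to verifying that the pairing $\mu$ induces on K-theory — built from the canonical self-pairing of $\operatorname{Tor}(\Z/n, \Z/n) \cong \Z/n$ — is perfect, rather than merely a map between groups of equal order; a five-lemma argument along the two UCT sequences computing source and target then upgrades this to the isomorphism for all $P, Q$. Identifying the correct generator $\mu$ and confirming non-degeneracy of the induced pairing is the crux, and it is exactly the content imported from \cite[Thm.~3.1, 6.2]{KS}.
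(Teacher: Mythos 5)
Your proposal is correct and follows essentially the same route as the paper: the paper likewise treats this theorem as an import from \cite{KS}, justifying the first claim in the remarks following Thm.~\ref{ks} by matching finitely generated K-groups against the model and invoking the UCT, recording the explicit self-dualities of $\mathbb{C}$ and $S$, and sourcing the one substantial ingredient --- that $\mathcal{O}_{n+1}$ and $S\mathcal{O}_{n+1}$ are dual --- from the literature (the paper quotes the Kaminker--Putnam $KK^1$-duality \cite{KP}, while you defer the non-degeneracy of the $\operatorname{Tor}$-pairing to \cite{KS}, which is what the ``c.f.'' in the statement refers to anyway). Your packaging of the Cuntz-algebra case via the K\"unneth computation and Lem.~\ref{ci} is a slightly more self-contained presentation of the same deferral, and your identification of the perfect-pairing verification as the imported crux is exactly right.
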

\begin{rem}
The C*-algebra $\mathcal{O}_{n+1}$ is the Cuntz algebra whose K-groups are given by
\[K_0(\mathcal{O}_{n+1})=\mathbb{Z}/n\mathbb{Z},\quad K_1(\mathcal{O}_{n+1})=0.\]
Two UCT C*-algebras are KK-equivalent if and only if they have isomorphic K-groups,
and the finitely generated K-groups of $A$
\[K_0(A)=\mathbb{Z}^{\oplus a}\oplus (\mathbb{Z}/n\mathbb{Z})^{\oplus c}\oplus\cdots, \quad K_1(A)=\mathbb{Z}^{\oplus b}\oplus (\mathbb{Z}/m\mathbb{Z})^{\oplus d}\oplus\cdots\]
are the same as the K-groups of the UCT C*-algebra $\mathbb{C}^{\oplus a}\oplus S^{\oplus b}\oplus \mathcal{O}_{n+1}^{\oplus c}\oplus (S\mathcal{O}_{m+1})^{\oplus d}\oplus\cdots$.
\end{rem}
\begin{rem}
In \cite{KP},
two Cuntz--Krieger algebras $\mathcal{O}_A$ and $\mathcal{O}_{A^t}$ are proved to be dual with respect to duality classes which are elements in the respective $KK^1$-groups.
This implies $D(\mathcal{O}_A)=S\mathcal{O}_{A^t}$ in our setting.
In particular, one has $D(\mathcal{O}_{n+1})=S\mathcal{O}_{n+1}$.
\end{rem}
Obviously, $\mathbb{C}$ is self-dual with $\mu =\nu=\pm  I_\mathbb{C}$.
Since $KK(\sigma_{S, S})=-I_{S^{2}}$,
it is also easy to check that $S$ is self-dual with the duality classes $(\beta_S, \beta_S^{-1})$ given by the Bott element $\beta_S \in KK(\mathbb{C}, S^{2})$.
Let $\mathbb{K}+\mathbb{C}1$ denote the unitization of $\mathbb{K}$ in $\mathcal{M}(\mathbb{K})$ (i.e., $1=1_{\mathcal{M}(\mathbb{K})}$).
Using the UCT
\[KK(\mathbb{C}, (\mathbb{K}+\mathbb{C}1)^{\otimes 2})=\operatorname{Hom}(K_0(\mathbb{C}), K_0((\mathbb{K}+\mathbb{C}1)^{\otimes 2})),\]
\[KK((\mathbb{K}+\mathbb{C}1)^{\otimes 2}, \mathbb{C})=\operatorname{Hom}(K_0((\mathbb{K}+\mathbb{C}1)^{\otimes 2}), K_0(\mathbb{C})),\]
we define two elements $\mu_{\epsilon, \delta}\in KK(\mathbb{C}, (\mathbb{K}+\mathbb{C}1)^{\otimes 2})$ and $\nu_{\epsilon, \delta}\in KK((\mathbb{K}+\mathbb{C}1)^{\otimes 2}, \mathbb{C})$ by
\begin{align*}
&\mu_{\epsilon, \delta} : [1_\mathbb{C}]_0\mapsto \epsilon [e\otimes 1]_0+\delta [1\otimes e]_0,\\
&\nu_{\epsilon, \delta} : [1\otimes e]_0\mapsto \epsilon [1_\mathbb{C}]_0,\\
&\nu_{\epsilon, \delta} : [e\otimes 1]_0\mapsto \delta [1_\mathbb{C}]_0,\\
&\nu_{\epsilon, \delta} : [e\otimes e]_0\mapsto 0,\\
&\nu_{\epsilon, \delta} : [1\otimes 1]_0\mapsto 0
\end{align*}
for $\epsilon, \delta \in \{\pm 1\}$.
One can easily check the following lemma.
\begin{lem}\label{dc}
The algebra $\mathbb{K}+\mathbb{C}1$  is self-dual with the duality classes $(\mu_{\epsilon,\delta}, \nu_{\epsilon, \delta})$. 
\end{lem}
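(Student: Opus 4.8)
The plan is to reduce both adjunction identities of Def.~\ref{dsw} (with $D(A)=A=\mathbb{K}+\mathbb{C}1$) to elementary computations in K-theory, exploiting that every algebra in sight lies in the UCT class and has free K-theory concentrated in even degree. First I would record the K-groups. From the split extension $0\to\mathbb{K}\to\mathbb{K}+\mathbb{C}1\to\mathbb{C}\to 0$ one obtains $K_0(\mathbb{K}+\mathbb{C}1)=\mathbb{Z}[e]_0\oplus\mathbb{Z}[1]_0\cong\mathbb{Z}^2$ and $K_1(\mathbb{K}+\mathbb{C}1)=0$; by the Künneth theorem $K_0((\mathbb{K}+\mathbb{C}1)^{\otimes 2})\cong\mathbb{Z}^4$ is free on the external products $[e\otimes e]_0,[e\otimes 1]_0,[1\otimes e]_0,[1\otimes 1]_0$, and again $K_1$ vanishes. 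Because these groups are free and sit in degree $0$, the Ext-term of the UCT vanishes and each relevant $KK$-group is identified with the corresponding $\operatorname{Hom}$-group of $K_0$'s, exactly as in the two displayed formulas preceding the lemma. In particular $\mu_{\epsilon,\delta}$ and $\nu_{\epsilon,\delta}$ are completely determined by the stated maps on $K_0$, and $KK(\mathbb{K}+\mathbb{C}1,\mathbb{K}+\mathbb{C}1)\cong\operatorname{Hom}(\mathbb{Z}^2,\mathbb{Z}^2)$, so that $I_A$ is detected by its action on $[e]_0,[1]_0$.

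Next I would invoke the naturality of the UCT isomorphism with respect to the Kasparov product and the exterior product: on these free, even K-theories the product $\hat{\otimes}$ corresponds to composition of the induced $K_0$-maps, while $-\otimes I_A$ corresponds to taking the external product with $\mathrm{id}_{K_0(\mathbb{K}+\mathbb{C}1)}$. Thus verifying $(\mu_{\epsilon,\delta}\otimes I_A)\hat{\otimes}(I_A\otimes\nu_{\epsilon,\delta})=I_A$ amounts to checking that the composite $K_0(A)\xrightarrow{(\mu\otimes I_A)_*}K_0(A^{\otimes 3})\xrightarrow{(I_A\otimes\nu)_*}K_0(A)$ is the identity, and likewise for the second identity via $K_0(A)\xrightarrow{(I_A\otimes\mu)_*}K_0(A^{\otimes 3})\xrightarrow{(\nu\otimes I_A)_*}K_0(A)$.

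Finally I would run the bookkeeping on generators, writing $[p\otimes q\otimes r]_0$ for the external product in $K_0(A^{\otimes 3})$. The map $(\mu\otimes I_A)_*$ places the image of $\mu$ in the first two tensor slots and the argument in the third, e.g.\ $[e]_0\mapsto \epsilon[e\otimes 1\otimes e]_0+\delta[1\otimes e\otimes e]_0$; then $(I_A\otimes\nu)_*$ leaves the first slot untouched and applies $\nu$ to the last two, sending this to $\epsilon\cdot\epsilon[e]_0+\delta\cdot 0=[e]_0$ since $\nu([1\otimes e]_0)=\epsilon$, $\nu([e\otimes e]_0)=0$ and $\epsilon^2=1$. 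The same computation on $[1]_0$ yields $\delta^2[1]_0=[1]_0$, so the first adjunction holds; the second is verified identically after putting $\mu$ in the last two slots and $\nu$ in the first two and using the remaining values of $\nu_{\epsilon,\delta}$, again with $\epsilon^2=\delta^2=1$ killing the cross terms. I expect the only real point requiring care, and the natural place for a sign or indexing slip, to be keeping track of which tensor factor each identity morphism occupies together with the compatibility of the UCT isomorphism with the exterior product; once the slots are fixed, the relations $\epsilon^2=\delta^2=1$ make every cross term vanish and leave precisely the identity, as required.
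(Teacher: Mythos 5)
Your proof is correct, and it is exactly the verification the paper has in mind: the paper states this lemma with the remark ``one can easily check'' and no written proof, precisely because the classes $\mu_{\epsilon,\delta}$, $\nu_{\epsilon,\delta}$ are \emph{defined} through the UCT isomorphisms with $\operatorname{Hom}$-groups on $K_0$, so the only content is the generator bookkeeping you carry out (including the correct slot placement: $\mu$ in slots $1,2$ and $\nu$ in slots $2,3$ for the first identity, and the reverse for the second). Your cross-term cancellations via $\nu_{\epsilon,\delta}([e\otimes e]_0)=\nu_{\epsilon,\delta}([1\otimes 1]_0)=0$ and the signs $\epsilon^2=\delta^2=1$ match the intended computation, and the multiplicativity of the UCT map under the Kasparov product that you invoke is exactly the standard fact (together with the three product identities listed in Section~3.1 of the paper) needed to justify reducing to $K_0$.
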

Let $A$ and $B$ be dualizable C*-algebras with dual algebras $D(A), D(B)$,
and let $(\mu_A, \nu_A), (\mu_B, \nu_B)$ be their duality classes.
One has two isomorphisms
\[KK(A, B)\ni x\mapsto \mu_A\hat{\otimes}(x\otimes I_{D(A)})\in KK(\mathbb{C}, B\otimes D(A)),\]
\[KK(\mathbb{C}, B\otimes D(A))\ni y\mapsto (I_{D(B)}\otimes y)\hat{\otimes}(\nu_B\otimes I_{D(A)})\in KK(D(B), D(A)),\]
where the inverse of the first map is
\[KK(\mathbb{C}, B\otimes D(A))\ni y\mapsto (y\otimes I_A)\hat{\otimes}(I_B\otimes \nu_A)\in KK(A, B)\]
and the inverse of the second one is given similarly.
Thus, we have the following isomorphism
\[D_{\mu_A, \nu_B}(-) : KK(A, B)\ni x\mapsto (I_{D(B)}\otimes\mu_A)\hat{\otimes}(I_{D(B)}\otimes x\otimes I_{D(A)})\hat{\otimes}(\nu_B\otimes I_{D(A)})\in KK(D(B), D(A))\]
which provides a dual morphism $D(A)\xleftarrow{D_{\mu_A, \nu_B}(x)}D(B)$ for a given morphism $A\xrightarrow{x} B$.
Using the three equations for the Kasparov product listed in the previous section,
a direct computation yields the following lemma.
\begin{lem}\label{inv}
Let $(\mu_A, \nu_A)$ be duality classes for $A$ and $D(A)$.
Then, the following elements 
$$\mu_A\hat{\otimes}KK(\sigma_{A, D(A)})\in KK(\mathbb{C}, D(A)\otimes A),\quad KK(\sigma_{A, D(A)})\hat{\otimes}\nu_A\in KK(A\otimes D(A),\mathbb{C})$$ are duality classes for $D(A)$ and $A$,
and one has $$D_{\mu_A, \nu_B}(-)^{-1}=D_{\mu_B\hat{\otimes}\sigma_{B, D(B)}, \sigma_{A, D(A)}\hat{\otimes}\nu_A}(-).$$
\end{lem}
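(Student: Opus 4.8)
The plan is to treat Lemma~\ref{inv} as the standard fact that in a symmetric monoidal category a dual pair can be ``turned around'' by composing the unit and counit with the symmetry, and that dualization of morphisms is involutive. Concretely, write $D := D(A)$ and $d := KK(\sigma_{A,D}) \in KK(A \otimes D, D \otimes A)$, so that the proposed classes are $\mu' := \mu_A \hat{\otimes} d$ and $\nu' := d \hat{\otimes} \nu_A$. For the first assertion I must verify the two unit--counit relations of Def.~\ref{dsw} for the pair $(D, A)$, namely
\[(\mu' \otimes I_D) \hat{\otimes} (I_D \otimes \nu') = I_D \qquad\text{and}\qquad (I_A \otimes \mu') \hat{\otimes} (\nu' \otimes I_A) = I_A.\]

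For the first relation I would expand using $(x \hat{\otimes} y)\otimes I_C = (x \otimes I_C)\hat{\otimes}(y \otimes I_C)$ and the analogue for $I_C \otimes (-)$, so that the left-hand side becomes
\[(\mu' \otimes I_D) \hat{\otimes} (I_D \otimes \nu') = (\mu_A \otimes I_D) \hat{\otimes} (d \otimes I_D) \hat{\otimes} (I_D \otimes d) \hat{\otimes} (I_D \otimes \nu_A).\]
The key first step is that the two middle flips collapse: by functoriality of $KK$ the composite $(d \otimes I_D)\hat{\otimes}(I_D \otimes d)$ is the single flip $KK(\sigma_{A, D\otimes D})$. I would then slide this flip to the right past $I_D \otimes \nu_A$ using identity (2) (which rewrites $I_D \otimes \nu_A$ as $KK(\sigma_{D, D\otimes A}) \hat{\otimes} (\nu_A \otimes I_D)$) together with the flip-composition identity $KK(\sigma_{A, D\otimes D}) \hat{\otimes} KK(\sigma_{D, D\otimes A}) = KK(\sigma_{A\otimes D, D})$, and finally absorb the remaining flip on the left via identity (1), which turns $(\mu_A \otimes I_D)\hat{\otimes} KK(\sigma_{A\otimes D, D})$ back into $I_D \otimes \mu_A$. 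What is left is exactly the second unit--counit relation $(I_D\otimes\mu_A)\hat{\otimes}(\nu_A\otimes I_D)=I_D$ for $(\mu_A, \nu_A)$. The second relation for $(\mu', \nu')$ is proved by the mirror-image computation: the middle flips collapse to $KK(\sigma_{A\otimes A, D})$, identity (1) is applied on the $\mu_A$ side and identity (2) on the $\nu_A$ side, and one lands on the first unit--counit relation $(\mu_A \otimes I_A)\hat{\otimes}(I_A \otimes \nu_A) = I_A$.

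For the second assertion I would first record that, by the first assertion just proved, $\mu_{D(B)} := \mu_B \hat{\otimes} KK(\sigma_{B, D(B)})$ and $\nu_{D(A)} := KK(\sigma_{A, D(A)}) \hat{\otimes} \nu_A$ are genuine duality classes for the dual pairs $(D(B), B)$ and $(D(A), A)$, so that $D_{\mu_{D(B)}, \nu_{D(A)}}(-) \colon KK(D(B), D(A)) \to KK(A, B)$ is a well-defined isomorphism (using the identifications $D(D(B)) = B$ and $D(D(A)) = A$ supplied by the first part). It then remains to check $D_{\mu_{D(B)}, \nu_{D(A)}} \circ D_{\mu_A, \nu_B} = I$ on $KK(A,B)$. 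I would substitute the defining formula for $D_{\mu_A, \nu_B}(x)$ into that for $D_{\mu_{D(B)}, \nu_{D(A)}}(-)$ and simplify the resulting string of tensor factors by the same moves as above: collapse adjacent flips by functoriality, slide the flips past the four duality classes $\mu_A, \nu_A, \mu_B, \nu_B$ using identities (1)--(3), and then cancel $\mu_B$ against $\nu_B$ and $\mu_A$ against $\nu_A$ by their unit--counit relations, leaving $x$. Alternatively one can use the factorisation of $D_{\mu_A, \nu_B}$ into the two isomorphisms recalled before the lemma, whose inverses are also recorded there, and match these against the corresponding factorisation of $D_{\mu_{D(B)}, \nu_{D(A)}}$ through $KK(\mathbb{C}, D(A) \otimes B)$, the two intermediate groups being identified by the flip $\sigma_{B, D(A)}$.

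The conceptual content is entirely the coherence of the symmetry $\sigma$, so I expect no genuine difficulty; the main obstacle is purely organisational, namely keeping track of the permutations of tensor factors and applying identities (1)--(3) in the right order so that every auxiliary flip is eventually absorbed. Care is also needed to invoke the correct one of the two unit--counit relations at each cancellation, since the two relations are genuinely different and interchanging them is the likeliest source of error.
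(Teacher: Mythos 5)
Your proposal is correct and is exactly the argument the paper intends: the paper gives no written proof of Lemma~\ref{inv}, stating only that ``a direct computation'' using the three Kasparov-product identities (1)--(3) yields the result, and your verification (collapsing composed flips by functoriality, absorbing them via identities (1) and (2), and invoking the appropriate unit--counit relation, then deducing involutivity of dualization) supplies precisely that computation. The only point worth making explicit is that showing $D_{\mu_{D(B)},\nu_{D(A)}}\circ D_{\mu_A,\nu_B}=\mathrm{id}$ suffices because $D_{\mu_A,\nu_B}$ is already known to be an isomorphism (a composite of the two isomorphisms recalled before the lemma), so a one-sided inverse is the inverse.
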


%We also write $D(x)=D_{\mu_A, \nu_B}(x)$ for short.
%In the next section, we will understand dual sequence of $C_f\xrightarrow{e_f}B\xrightarrow{f}A \to SC_f$:
%\[D(C_f)\leftarrow D(B)\leftarrow D(A)\leftarrow D(SC_f).\]

%%%%%%%%%%%%%%%%%%%%%%%%%%%%%%%%%%%%%%%%%%%%%%%%%%%%%%%%%%%%%%%%%%%%%%%%%%%%%%%%%%%%%%%%%%%%%%%%%%%%%%%%%%%%%%%%%%%%%%%%%%%%%%%%%%%%%%%%%%%%%%%%%%%%%%%%%%%%%%%%%%%%%%%%%%%%%%%%%%%%%%%%
%%%%%%%%%%%%%%%%%%%%%%%%%%%%%%%%%%%%%%%%%%%%%%%%%%%%%%%%%%%%%%%%%%%%%%%%%%%%%%%%%%%%%%%%%%%%%%%%%%%%%%%%%%%%%%%%%%%%%%%%%%%%%%%%%%%%%%%%%%%%%%%%%%%%%%%%%%%%%%%%%%%%%%%%%%%%%%%%%%%%%%%%

\subsection{Duals of exact triangles}\label{doe}
For $SA\xrightarrow{i(f)}C_f\xrightarrow{e(f)}B\xrightarrow{f}A$ and the Bott element $\beta_S\in KK(\mathbb{C}, S^{\otimes 2})$,
one has a morphism
\[d(f):=(\beta_S\otimes I_A)\hat{\otimes}(I_S\otimes KK(i(f)))\in KK(A, SC_f).\]
In the following,
we fix duality classes $(\mu_A, \nu_A)$, $(\mu_B, \nu_B)$ and dual algebras $D(A), D(B)$ as in Def.~\ref{dsw}.
Assume that there is a $*$-homomorphism $D(f) : D(A)\to D(B)$ satisfying
\[D_{\mu_B, \nu_A}(KK(f))=KK(D(f))\in KK(D(A), D(B)).\]
Note that one can always chose $D(A), D(B)$ as stable Kirchberg algebras,
and then $D_{\mu_B, \nu_A}(KK(f))$ is represented by a $*$-homomorphism $D(f) : D(A)\to D(B)$. 

Following \cite[Chap. 14]{Sw},
we will show the following theorem in the next two subsections.
\begin{thm}\label{Mt1}
Let $f \colon B \to A$ be a $*$-homomorphism with mapping cone algebra $C_f$ and dual homomorphism $D(f) \colon D(A) \to D(B)$ as described above. The two algebras $C_f$ and $SC_{D(f)}$ are Spanier--Whitehead K-dual with a duality class
\[\mu\in KK(\mathbb{C}, C_f\otimes SC_{D(f)})\]
satisfying
\begin{align*}
D_{\mu, \nu_B}(KK(e(f)))=&(I_{D(B)}\otimes\mu)\hat{\otimes}(I_{D(B)}\otimes KK(e(f))\otimes I_{SC_{D(f)}})\hat{\otimes}(\nu_B\otimes I_{SC_{D(f)}})\\
=&d(D(f)).
\end{align*}
\end{thm}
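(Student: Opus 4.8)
The plan is to follow the topological template of \cite[Lem.~14.31]{Sw} and build $\mu$ from the difference map $\varphi$ of Lem.~\ref{lem:diffmap}. First I would record the suspended duality classes $I_S\otimes\mu_A\in KK(S, SA\otimes D(A))$ and $I_S\otimes\mu_B\in KK(S, SB\otimes D(B))$, which together define a class $(I_S\otimes\mu_A, I_S\otimes\mu_B)$ in $KK(S,P)$, where $P=(SA\otimes D(A))\oplus(SB\otimes D(B))$ and $Q=SA\otimes D(B)$. The first key point is that $KK(\varphi)$ annihilates this pair. Granting this, I would apply $KK(S,-)$ to the Puppe sequence $SQ\xrightarrow{i(\varphi)}C_\varphi\xrightarrow{e(\varphi)}P\xrightarrow{\varphi}Q$; exactness at $KK(S,P)$ then produces a lift $s\mu\in KK(S,C_\varphi)$ whose image under $KK(e(\varphi))$ is $(I_S\otimes\mu_A, I_S\otimes\mu_B)$. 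Using the identification of $C_\varphi$ with a subalgebra of $C_f\otimes C_{D(f)}$ from Lem.~\ref{K} together with Bott periodicity, I would transport $s\mu$ to the desired class $\mu\in KK(\mathbb{C}, C_f\otimes SC_{D(f)})$.

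To see that $KK(\varphi)$ kills the pair, I would reduce to the identity $\mu_A\hat\otimes(I_A\otimes KK(D(f)))=\mu_B\hat\otimes(KK(f)\otimes I_{D(B)})$ in $KK(\mathbb{C}, A\otimes D(B))$; the minus sign built into the difference map $\varphi$ then makes the difference vanish after suspending by $I_S$. This identity is exactly where the hypothesis $KK(D(f))=D_{\mu_B,\nu_A}(KK(f))$ enters: substituting the definition of $D_{\mu_B,\nu_A}$ and reorganizing the tensor factors by means of the three product identities (1)--(3) of Sec.~3.1, the composite involving $\mu_A$ followed by $\nu_A$ collapses via the unit--counit adjunction for $(\mu_A,\nu_A)$, leaving precisely $\mu_B\hat\otimes(KK(f)\otimes I_{D(B)})$.

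To prove that $\mu$ is a genuine duality class I would invoke Lem.~\ref{ci} and show that $\mu\hat\otimes\colon KK(P'\otimes C_f, Q')\to KK(P', Q'\otimes SC_{D(f)})$ is an isomorphism for all separable nuclear $P', Q'$. The mapping cone triangle $SA\to C_f\to B\to A$ and its dual triangle $SD(B)\to C_{D(f)}\to D(A)\to D(B)$ built from $D(f)$ each yield six-term exact sequences in the relevant variable, and the lift relation $KK(e(\varphi))$-image of $s\mu$ equals $(I_S\otimes\mu_A, I_S\otimes\mu_B)$ guarantees that $\mu\hat\otimes$, $\mu_A\hat\otimes$ and $\mu_B\hat\otimes$ assemble into a morphism of these two sequences. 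Since $A$ and $B$ are dualizable, $\mu_A\hat\otimes$ and $\mu_B\hat\otimes$ are isomorphisms, so the five lemma (in the spirit of Lem.~\ref{mn}) forces $\mu\hat\otimes$ to be an isomorphism; Lem.~\ref{ci} then supplies a counit $\nu$ and shows that $C_f$ and $SC_{D(f)}$ are Spanier--Whitehead K-dual.

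Finally, the identity $D_{\mu,\nu_B}(KK(e(f)))=d(D(f))$ I would obtain by unwinding both sides against the construction of $\mu$. Expanding $D_{\mu,\nu_B}(KK(e(f)))$ by its definition and feeding in the lift relation for $s\mu$ together with the duality $(\mu_B,\nu_B)$, the evaluation map $e(f)$ cancels most of the cone data, and what remains is the class of the inclusion $i(D(f))\colon SD(B)\to C_{D(f)}$ twisted by the Bott element $\beta_S$, which is by definition $d(D(f))$. The main obstacle, I expect, is not any single algebraic identity but the bookkeeping concentrated in the first and last paragraphs: pinning down the subalgebra identification $C_\varphi\hookrightarrow C_f\otimes C_{D(f)}$ of Lem.~\ref{K}, tracking suspensions and Bott maps so that $s\mu\in KK(S,C_\varphi)$ lands correctly in $KK(\mathbb{C}, C_f\otimes SC_{D(f)})$, and matching the signs and flips arising from $\sigma$ and $KK(\sigma_{S,S})=-I_{S^2}$ so that the final formula produces exactly $d(D(f))$ rather than a sign- or suspension-shifted variant.
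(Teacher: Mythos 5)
Your proposal reproduces the paper's own architecture: the difference map $\varphi$, the Puppe lift $s\mu\in KK(S,C_\varphi)$, the identification $C_\varphi\cong K_{f,D(f)}\subset C_f\otimes C_{D(f)}$, duality via Lem.~\ref{ci} combined with a five-lemma argument on the two Puppe sequences, and an unwinding computation for $D_{\mu,\nu_B}(KK(e(f)))$. However, there is a genuine gap in your third paragraph, where the main content of the proof is asserted rather than proven. The claim that the lift relation $s\mu\hat{\otimes}KK(e(\varphi))=(I_S\otimes\mu_A,I_S\otimes\mu_B)$ by itself ``guarantees'' that $\mu\hat{\otimes}$, $\mu_A\hat{\otimes}$, $\mu_B\hat{\otimes}$ assemble into a morphism of the two exact sequences is not a formal consequence of the lift relation. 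Commutativity of the two middle squares of the ladder is exactly the pair of identities $D_{\mu,\nu_B}(KK(e(f)))=d(D(f))$ and $\mu_{SA}\hat{\otimes}(KK(i(f))\otimes I_{SD(A)})=\mu\hat{\otimes}(I_{C_f\otimes S}\otimes KK(e(D(f))))$, and proving them requires checking, at the level of elements of $K_{f,D(f)}$, that $e(f)\otimes{\rm id}_{C_{D(f)}}$ and ${\rm id}_{C_f}\otimes e(D(f))$ factor (after the reparametrization $r^*$) through $e(\varphi)$ followed by $Pr_2$ resp.\ $Pr_1$, landing in $B\otimes SD(B)\subset B\otimes C_{D(f)}$ via $i(D(f))$ resp.\ in $SA\otimes D(A)\subset C_f\otimes D(A)$; this is the content of the paper's Lem.~\ref{cii}. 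Only once these homomorphism-level diagrams are in place can the lift relation be fed in.

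This has two concrete consequences for your plan. First, the identity for the $i(f)$/$e(D(f))$ square never appears anywhere in your proposal, and the five lemma cannot be run without it. Second, your ordering is backwards: the formula $D_{\mu,\nu_B}(KK(e(f)))=d(D(f))$, which you defer to the final paragraph as something to be unwound after duality is established, is an input to the five-lemma argument, not an output of it. It can indeed be proven before duality is known -- its proof uses only the construction of $\mu$, the $K_{f,D(f)}$ diagrams, and the $(\mu_B,\nu_B)$ adjunction, which is precisely why the paper places Lem.~\ref{cii} before the proof of Thm.~\ref{Mt1} -- so the fix is to promote your last paragraph (together with its missing companion identity) to come before the five-lemma step. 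Finally, a smaller omission: the outer vertical maps of the ladder are $\mu_{SA}\hat{\otimes}$ and $\mu_{SB}\hat{\otimes}$, so you need the suspended classes of Lem.~\ref{sl} to be duality classes and the sign check $KK(SD(f))=D_{\mu_{SB},\nu_{SA}}(KK(Sf))$ for the leftmost square; ``since $A$ and $B$ are dualizable'' does not cover this.
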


\subsubsection{Construction of $\mu$}
%We explain the construction of $\mu$.
Let $\phi : S\oplus S\to S$ be the map sending $(\alpha, \beta)\in S\oplus S$ to the function $\gamma \in S$ defined by
\[\gamma(t):=\alpha(2t),\; t\in [0, 1/2], \quad \gamma(t):=\beta(2-2t),\; t\in [1/2, 1].\]
We identify $KK(A, (S\oplus S)\otimes B)$ with $KK(A, SB)^{\oplus 2}$,
and $\phi$ induces a map
\[-\hat{\otimes}(KK(\phi)\otimes I_B) : KK(A, (S\oplus S)\otimes B)\ni (\alpha, \beta)\mapsto \alpha-\beta \in KK(A, SB).\]
\begin{lem} \label{lem:diffmap}
Let $\varphi : (SA\otimes D(A))\oplus (SB\otimes D(B))\to SA\otimes D(B)$ be the $*$-homomorphism
\[\varphi : = (\phi\otimes {\rm id}_{A\otimes D(B)})\circ(({\rm id}_{SA}\otimes D(f))\oplus({\rm id}_S\otimes f\otimes {\rm id}_{D(B)})).\]
For $(I_S\otimes \mu_A, I_S\otimes \mu_B)\in KK(S, (SA\otimes D(A))\oplus (SB\otimes D(B)))$,
we have \[(I_S\otimes \mu_A, I_S\otimes \mu_B)\hat{\otimes} KK(\varphi)=0\in KK(S, SA\otimes D(B)).\]
\end{lem}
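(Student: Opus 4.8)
The plan is to unwind the definition of $\varphi$ so that the claimed vanishing becomes the equality of two explicit duality classes in $KK(\mathbb{C}, A \otimes D(B))$, and then to deduce that equality from the unit--counit adjunction of Def.~\ref{dsw}. First I would push the pair $(I_S \otimes \mu_A, I_S \otimes \mu_B)$ through the two summands of $({\rm id}_{SA} \otimes D(f)) \oplus ({\rm id}_S \otimes f \otimes {\rm id}_{D(B)})$ and then apply the difference map $-\hat{\otimes}(KK(\phi) \otimes I_{A \otimes D(B)})$. Using functoriality of $I_S \otimes -$ together with the identifications $KK({\rm id}_{SA} \otimes D(f)) = I_S \otimes I_A \otimes KK(D(f))$ and $KK({\rm id}_S \otimes f \otimes {\rm id}_{D(B)}) = I_S \otimes KK(f) \otimes I_{D(B)}$, the left-hand side of the lemma becomes
\[ I_S \otimes \bigl( \mu_A \hat{\otimes}(I_A \otimes KK(D(f))) - \mu_B \hat{\otimes}(KK(f) \otimes I_{D(B)}) \bigr). \]
Hence it suffices to prove the single identity $\mu_A \hat{\otimes}(I_A \otimes KK(D(f))) = \mu_B \hat{\otimes}(KK(f) \otimes I_{D(B)})$ in $KK(\mathbb{C}, A \otimes D(B))$.

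Next I would interpret both sides through the two duality isomorphisms recalled in Sec.~\ref{sd}, with the roles of $A$ and $B$ interchanged. The right-hand side is exactly the image of $KK(f)$ under the first isomorphism $x \mapsto \mu_B \hat{\otimes}(x \otimes I_{D(B)})$. For the left-hand side I would substitute $KK(D(f)) = D_{\mu_B, \nu_A}(KK(f))$, which by its very definition factors as this first isomorphism followed by the second one, $y \mapsto (I_{D(A)} \otimes y)\hat{\otimes}(\nu_A \otimes I_{D(B)})$. Writing $y := \mu_B \hat{\otimes}(KK(f) \otimes I_{D(B)})$ for the common right-hand side, the claim then reduces to
\[ \mu_A \hat{\otimes}\bigl(I_A \otimes \bigl[(I_{D(A)} \otimes y)\hat{\otimes}(\nu_A \otimes I_{D(B)})\bigr]\bigr) = y, \]
that is, to showing that $z \mapsto \mu_A \hat{\otimes}(I_A \otimes z)$ is inverse to the second duality isomorphism.

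The core computation, and the main (though essentially routine) obstacle, is this last identity. Here I would expand $I_A \otimes (-)$ over the Kasparov product, rewrite $\mu_A \hat{\otimes}(I_{A \otimes D(A)} \otimes y)$ as the external product $\mu_A \otimes y$, and factor it as $y \hat{\otimes}(\mu_A \otimes I_{A \otimes D(B)})$. The remaining composite $(\mu_A \otimes I_{A \otimes D(B)})\hat{\otimes}(I_A \otimes \nu_A \otimes I_{D(B)})$ carries a common trailing factor $I_{D(B)}$, so by functoriality of $-\otimes I_{D(B)}$ it equals $\bigl[(\mu_A \otimes I_A)\hat{\otimes}(I_A \otimes \nu_A)\bigr] \otimes I_{D(B)}$. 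The bracket is precisely the left unit--counit adjunction relation for $(\mu_A, \nu_A)$, hence equals $I_A$, leaving $I_{A \otimes D(B)}$; composing with $y$ returns $y$, as required. Throughout I would keep careful track of the tensor-factor positions, so that $\nu_A$ acts on the correct $D(A) \otimes A$ slots, and use the identities (1)--(3) for the Kasparov product to justify the reorderings. This bookkeeping, rather than any conceptual difficulty, is where the actual work lies.
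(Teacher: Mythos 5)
Your proposal is correct and follows essentially the same route as the paper: both reduce the vanishing, via the difference map $\phi$, to the single identity $\mu_A\hat{\otimes}(I_A\otimes KK(D(f)))=\mu_B\hat{\otimes}(KK(f)\otimes I_{D(B)})$, and both derive that identity from the definition $KK(D(f))=D_{\mu_B,\nu_A}(KK(f))$ together with the zig-zag relation $(\mu_A\otimes I_A)\hat{\otimes}(I_A\otimes\nu_A)=I_A$ and the commutation identities for external products. The only difference is organizational: the paper inserts the zig-zag identity directly into $\mu_B\hat{\otimes}KK(f\otimes{\rm id}_{D(B)})$ and regroups, whereas you expand $D_{\mu_B,\nu_A}$ through its factorization and collapse it back --- the same chain of equalities read in the opposite direction.
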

\begin{proof}
By the definition of $D(f)$,
one has
\begin{align*}
\mu_B\hat{\otimes}KK(f\otimes {\rm id}_{D(B)})&=(I_\mathbb{C}\otimes\mu_B)\hat{\otimes}(I_\mathbb{C}\otimes KK(f)\otimes I_{D(B)})\\
&=(I_\mathbb{C}\otimes \mu_B)\hat{\otimes}(I_\mathbb{C}\otimes KK(f)\otimes I_{D(B)})\hat{\otimes}(\mu_A\otimes I_{A\otimes D(B)})\hat{\otimes}(I_A\otimes \nu_A\otimes I_{D(B)})\\
&=\mu_A\hat{\otimes}(I_A\otimes KK(D(f))).
\end{align*}
Thus, we obtain
\begin{align*}
&(I_S\otimes \mu_A, I_S\otimes \mu_B)\hat{\otimes}KK(\varphi)\\
=&(I_S\otimes(\mu_A\hat{\otimes}(I_A\otimes KK(D(f)))), I_S\otimes(\mu_B\hat{\otimes}(KK(f)\otimes I_{D(B)})))\hat{\otimes}(KK(\phi)\otimes I_{A\otimes D(B)})\\
=&I_S\otimes(\mu_A\hat{\otimes}(I_A\otimes KK(D(f)))-\mu_B\hat{\otimes}(KK(f)\otimes I_D(B)))\\
=&0. \qedhere
\end{align*}
\end{proof}
Combining the above lemma with Puppe's exact sequence (see \cite[{Thm. 19.4.3.}]{B})
\[KK(S, C_\varphi)\xrightarrow{-\hat{\otimes}KK(e(\varphi))}KK(S, S(A\otimes D(A))\oplus S(B\otimes D(B)))\xrightarrow{-\hat{\otimes}KK(\varphi)} KK(S, SA\otimes D(B)),\]
there is an element $s\mu \in KK(S, C_\varphi)$ satisfying $s\mu\hat{\otimes}KK(e(\varphi))=(I_S\otimes \mu_A, I_S\otimes \mu_B)$.

Below we will identify the mapping cone $C_\varphi$ with
\[K_{f, D(f)}:=\operatorname{Ker}(C_f\otimes C_{D(f)}\xrightarrow{e(f)\otimes e(D(f))}B\otimes D(A)).\]
By the definition of the mapping cone algebras,
it is easy to check that $C_\varphi$ is identified with a subalgebra of
\[C([0, 1]^2, A\otimes D(B))\oplus C([0, 1], A\otimes D(A))\oplus C([0, 1], B\otimes D(B))\]
where an element $(F(-, -), a(-), b(-))$ lies in $C_\varphi$ if and only if it satisfies the following conditions:
\begin{align*}
& F(p, 0)=F(0, q)=F(p, 1)=0, a(0)=a(1)=0,\; b(0)=b(1)=0,\\
&F(1, q)={\rm id}_A\otimes D(f)(a(2q)),\; q\in [0, 1/2],\\
&F(1, q)=f\otimes {\rm id}_{D(B)}(b(2-2q)),\; q\in [1/2, 1].
\end{align*}
Recall that $C_f\otimes C_{D(f)}$ is a subalgebra of
\[((C_0(0, 1]\otimes A)\oplus B)\otimes ((C_0(0, 1]\otimes D(B))\oplus D(A)).\]
\begin{lem}\label{K}
The algebra $K_{f, D(f)}$ is identified with a subalgebra of 
\[C([0, 1]^2, A\otimes D(B))\oplus C([0, 1], A\otimes D(A))\oplus C([0, 1], B\otimes D(B))\]
consisting of the functions satisfying the following boundary conditions:
\begin{align*}
&F(0, s)=F(t, 0)=0,\; a(0)=a(1)=0,\; b(0)=b(1)=0,\\
&F(t, 1)={\rm id}\otimes D(f)(a(t)),\; F(1, s)=f\otimes {\rm id}(b(s)).
\end{align*}
\end{lem}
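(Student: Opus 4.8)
The plan is to identify $K_{f,D(f)}$ by decomposing the ambient tensor product into its four natural summands and then tracking how the two mapping cone relations interact with the kernel condition. First I would expand
\[
((C_0(0,1]\otimes A)\oplus B)\otimes((C_0(0,1]\otimes D(B))\oplus D(A))
\]
as the direct sum of $C_0((0,1]^2)\otimes A\otimes D(B)$, $C_0(0,1]\otimes A\otimes D(A)$, $C_0(0,1]\otimes B\otimes D(B)$ and $B\otimes D(A)$, so that a general element is a quadruple $(F(t,s),a(t),b(s),x)$. The vanishing of $C_0(0,1]$ at the origin gives the conditions $F(0,s)=F(t,0)=0$, $a(0)=0$ and $b(0)=0$ for free. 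On an elementary tensor $(a,b)\otimes(c,d)$ the map $e(f)\otimes e(D(f))$ returns $b\otimes d$, i.e.\ the fourth coordinate $x$, so $K_{f,D(f)}$ consists precisely of those quadruples lying in $C_f\otimes C_{D(f)}$ with $x=0$.

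Next I would read off the gluing relations cutting out $C_f\otimes C_{D(f)}$. Evaluating the defining relation $a(1)=f(b)$ of $C_f$ on elementary tensors yields, along the edge $t=1$, the two identities $F(1,s)=f\otimes\mathrm{id}(b(s))$ and $a(1)=f\otimes\mathrm{id}(x)$; the symmetric relation $c(1)=D(f)(d)$ of $C_{D(f)}$ yields, along $s=1$, the identities $F(t,1)=\mathrm{id}\otimes D(f)(a(t))$ and $b(1)=\mathrm{id}\otimes D(f)(x)$. Imposing $x=0$ collapses the second and fourth of these to $a(1)=0$ and $b(1)=0$, while the first and third survive unchanged. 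Collecting everything gives exactly the boundary conditions in the statement.

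The step that needs genuine care — and the main obstacle — is to justify that these relations, derived on elementary tensors, actually characterize $C_f\otimes C_{D(f)}$ for arbitrary elements. I would do this via exactness: the sequence $0\to C_f\to(C_0(0,1]\otimes A)\oplus B\xrightarrow{\delta_f}A\to 0$ with $\delta_f(a,b)=\mathrm{ev}_1(a)-f(b)$ is exact, $\delta_f$ being surjective because $\mathrm{ev}_1$ is. Since all algebras involved are nuclear, tensoring (minimally) with $C_{D(f)}$ preserves exactness, so $C_f\otimes C_{D(f)}=\operatorname{Ker}(\delta_f\otimes\mathrm{id}_{C_{D(f)}})$, which is precisely the pair of $t=1$ relations valid for every element. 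Running the same argument with the two tensor factors exchanged produces the $s=1$ relations, and $C_f\otimes C_{D(f)}$ is the intersection of the two kernels. Once this is established, imposing $x=0$ and reading off the boundary data yields the claimed description of $K_{f,D(f)}$.
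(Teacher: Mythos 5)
Your proposal is correct and follows essentially the same route as the paper: both decompose the ambient algebra $((C_0(0,1]\otimes A)\oplus B)\otimes((C_0(0,1]\otimes D(B))\oplus D(A))$ into the four summands, realize $C_f\otimes C_{D(f)}$ as the intersection of the kernels of $(Ev_1-f)\otimes{\rm id}$ and ${\rm id}\otimes(Ev_1-D(f))$ via exactness of the minimal tensor product (the paper cites Pisier for this, you invoke nuclearity, hence exactness, which is the same fact), and then cut out $K_{f,D(f)}$ by setting the $B\otimes D(A)$-coordinate to zero to read off the boundary conditions.
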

\begin{proof}
We write $CA:=C_0(0, 1]\otimes A$ for short and define a completely bounded map by
\[ Ev_1-f : CA\oplus B\ni (\alpha(t), \beta)\mapsto \alpha(1)-f(\beta)\in A.\]
By \cite[{Page 12}]{Pis},
one has the diagram below whose vertical and horizontal sequences are exact:
\[
\begin{tikzcd}[column sep=1.2cm, font=\small]
&& 0\ar[d] && 0\ar[d] \\
0 \ar[r] & C_f\otimes C_{D(f)} \ar[r] & C_f\otimes (CD(B)\oplus D(A)) \ar[rr,"{\rm id} \otimes (Ev_1-D(f))"] \ar[d] && C_f\otimes D(B) \ar[d] \\
&& (CA\oplus B)\otimes (CD(B)\oplus D(A)) \ar[d,"(Ev_1-f) \otimes {\rm id}"] \ar[rr,"{\rm id}\otimes (Ev_1-D(f))"] && (CA\oplus B)\otimes D(B)\ar[d,"(Ev_1-f)\otimes {\rm id}"]\\
&& A\otimes (CD(B)\oplus D(A)) \ar[rr,"{\rm id}\otimes (Ev_1-D(f))"] && A\otimes D(B).
\end{tikzcd}
\]
The above diagram implies
\[C_f\otimes C_{D(f)}={\rm Ker}((Ev_1-f)\otimes {\rm id})\cap {\rm Ker} ({\rm id}\otimes (Ev_1-D(f))).\]
Identifying
\[(\alpha(t), \beta)\otimes (x(s), y)\in (CA\oplus B)\otimes (CD(B)\oplus D(A))\]
with
\begin{align*}
&(\alpha(t)\otimes x(s), \alpha(t)\otimes y, \beta\otimes x(s), \beta\otimes y)=(F(t, s), a(t), b(s), d)\\
&\in C_0((0, 1]^2, A\otimes D(B))\oplus C_0((0, 1], A\otimes D(A))\oplus C_0((0, 1], B\otimes D(B))\oplus (B\otimes D(A)),
\end{align*}
one has
\begin{align*}
&{\rm id}\otimes (Ev_1-D(f)) (F, a, b, d)=(F(t, 1)-{\rm id}\otimes D(f)(a(t)), b(1)-{\rm id}\otimes D(f)(d)),\\
&(Ev_1-f)\otimes{\rm id} (F, a, b, d)=(F(1, s)-f\otimes{\rm id}(b(s)), a(1)-f\otimes{\rm id}(d)),\\
&K_{f, D(f)}=C_f\otimes C_{D(f)}\cap C_0((0, 1]^2, A\otimes D(B))\oplus C_0((0, 1], A\otimes D(A))\oplus C_0((0, 1], B\otimes D(B))\oplus 0.
\end{align*}
Now it is straightforward to prove the statement.
\end{proof}
We define a map $r : [0, 1]^2\to [0, 1]^2$ by
\[r (p, q):=(2qp, p), \; q\in [0, 1/2],\quad r (p, q):=(p, (2-2q)p), \; q\in [1/2, 1],\]
(see \cite[proof of Lem.~14.30]{Sw} for a sketch) and this map induces an isomorphism
\[r^* : K_{f, D(f)}\ni (F(t, s), a, b)\mapsto (F(r(p, q)), a, b)\in C_\varphi.\]
Let  $\mu \in KK(\mathbb{C}, C_f\otimes (SC_{D(f)}))$ be the composition of the following morphisms:
\[\mathbb{C}\xrightarrow{\beta_S}S^2\xrightarrow{I_S\otimes(s\mu)}SC_\varphi\xrightarrow{KK(S({r^*}^{-1}))}SK_{f, D(f)}\subset S(C_f\otimes C_{D(f)})\xrightarrow{KK(\sigma_{S, C_f}\otimes {\rm id}_{C_{D(f)}})} C_f\otimes (SC_{D(f)}).\]

\subsubsection{Proof of Thm. \ref{Mt1}}
\begin{lem}\label{sl}
Let $(\mu_A, \nu_A)$ be the duality classes for $A, D(A)$.
Then the following elements are duality classes for $SA, SD(A)$:
\[\mu_{SA}:=\mu_A\hat{\otimes}(\beta_S\otimes I_{A\otimes D(A)})\hat{\otimes}(KK(\sigma_{S, SA})\otimes I_{D(A)})\in KK(\mathbb{C}, SA\otimes SD(A)),\]
\[\nu_{SA}:=(KK(\sigma_{SD(A), S})\otimes I_A)\hat{\otimes}(\beta_S^{-1}\otimes I_{D(A)\otimes A})\hat{\otimes}\nu_A\in KK(SD(A)\otimes SA, \mathbb{C}).\]
\end{lem}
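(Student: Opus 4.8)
The plan is to verify the two unit--counit adjunction relations of Def.~\ref{dsw} directly for the pair $(\mu_{SA}, \nu_{SA})$. The conceptual content is that this pair is nothing but the ``tensor product'' of the duality $(\mu_A, \nu_A)$ for $(A, D(A))$ with the self-duality $(\beta_S, \beta_S^{-1})$ of $S$: tensoring $\mu_A$ with $\beta_S$ produces a class in $KK(\mathbb{C}, S\otimes S\otimes A\otimes D(A))$, and the flip $\sigma_{S, SA}$ (resp.\ $\sigma_{SD(A), S}$) only reindexes the tensor factors so that $SA\otimes SD(A)=(S\otimes A)\otimes(S\otimes D(A))$ is grouped in the order dictated by Def.~\ref{dsw} rather than as $(A\otimes D(A))\otimes(S\otimes S)$. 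Once this bookkeeping is unwound, each adjunction relation should split into two independent pieces, one on the $S$-factors and one on the $A$-factors.

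First I would treat the relation $(\mu_{SA}\otimes I_{SA})\hat{\otimes}(I_{SA}\otimes\nu_{SA})=I_{SA}\in KK(SA, SA)$. Substituting the definitions and repeatedly applying the three Kasparov-product identities recalled in Section~3.1 (above all the interchange law (3)) to commute the flip isomorphisms past $\mu_A, \nu_A, \beta_S, \beta_S^{-1}$, the whole product rearranges into the external product of
\[(\beta_S\otimes I_S)\hat{\otimes}(I_S\otimes\beta_S^{-1})\in KK(S,S)\quad\text{and}\quad(\mu_A\otimes I_A)\hat{\otimes}(I_A\otimes\nu_A)\in KK(A,A).\]
The first factor equals $I_S$ by the self-duality of $S$, and the second equals $I_A$ by the adjunction for $(A, D(A))$; hence the product is $I_S\otimes I_A=I_{SA}$. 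The second relation is handled identically using the second adjunction relations for $S$ and for $A$. Alternatively, since $\mu_{SA}$ is built from $\mu_A$ by tensoring with the $KK$-equivalence $\beta_S$ and composing with flips, the transformation $\mu_{SA}\hat{\otimes}$ is an isomorphism, so Lemma~\ref{ci} already produces a dual class $\nu$; by the uniqueness noted after Def.~\ref{dsw} it then suffices to check the single relation $(I_{SD(A)}\otimes\mu_{SA})\hat{\otimes}(\nu_{SA}\otimes I_{SD(A)})=I_{SD(A)}$, which identifies $\nu_{SA}$ with $\nu$ and finishes the proof with half the computation.

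The one genuinely delicate point is the flip bookkeeping: one must disentangle the various $\sigma$'s so that the large Kasparov product factors honestly as an external product of the two component adjunctions, and in particular check that the signs introduced by the flips, including $KK(\sigma_{S,S})=-I_{S^2}$, combine to give $+I_{SA}$ rather than $-I_{SA}$. The cancellation of these signs is exactly what the self-duality relation $(\beta_S\otimes I_S)\hat{\otimes}(I_S\otimes\beta_S^{-1})=I_S$ encodes, and it is the reason the specific flips $\sigma_{S,SA}$ and $\sigma_{SD(A),S}$ (rather than some other rearrangement) appear in the definitions of $\mu_{SA}$ and $\nu_{SA}$. Everything else is a routine application of identities (1)--(3) together with functoriality of the flip.
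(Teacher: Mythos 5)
Your proposal is correct and takes essentially the same route as the paper: the paper also verifies the unit--counit relations by direct computation, using the flip identities to factor the composite into the $S$-part and $A$-part adjunctions (rendered there as string diagrams), with the sign issue you flag handled explicitly via the permutation identity $KK(\sigma_{S^2, S})\otimes I_{A\otimes D(A)\otimes A}=(-1)^2 I_{S^3A\otimes D(A)\otimes A}$. Your alternative shortcut through Lem.~\ref{ci} and uniqueness of the counit is also valid, but it is not the argument the paper uses.
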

\begin{proof}
We check the unit co-unit adjunction formula.
One has
\[\mu_{SA}\otimes I_S=(I_S\otimes \mu_{SA})\hat{\otimes}KK(\sigma_{S, SA\otimes SD(A)}),\]
\[I_A\otimes \beta_S^{-1}=KK(\sigma_{A, S^2})\hat{\otimes}(\beta_S^{-1}\otimes I_A).\]
Thus, a direct computation yields
\begin{align*}
&\mu_{SA}\otimes I_{SA}\\
=&(I_S\otimes \mu_A\otimes I_A)\hat{\otimes}(I_S\otimes\beta_S\otimes I_{A\otimes D(A)\otimes A})\hat{\otimes}(KK(\sigma_{S, S^2A\otimes D(A)})\otimes I_A)\hat{\otimes}(KK(\sigma_{S, SA})\otimes I_{D(A)\otimes SA}), \\
&I_{SA}\otimes \nu_{SA} \\
=&(I_S\otimes KK(\sigma_{A\otimes SD(A), S})\otimes I_A)\hat{\otimes}(I_{S^2}\otimes KK(\sigma_{A, S})\otimes I_{D(A)\otimes A})\hat{\otimes}(I_S\otimes \beta_S^{-1}\otimes I_{A\otimes D(A)\otimes A})\hat{\otimes}(I_{SA}\otimes\nu_A).  
\end{align*}
In symmetric tensor categories we can represent composition of morphisms as string diagrams, in which the duality classes correspond to cups and caps and the transposition of tensor factors as crossings. The graphical representation of the above identity is shown in Fig.~\ref{fig:muSA}.

\begin{figure}[htp]
\begin{center}
\begin{tikzpicture}[scale=0.7,every node/.style={scale=0.7}]
	\node[draw=red,fill=white] at (0,0.3) {$S$};
	\node[draw=black,fill=white] at (5,0.3) {$A$};
	
	\draw[red,thick] (0,0) -- (0,-2);
	\draw (5,0) -- (5,-4);
	\draw plot [smooth] coordinates {(3,-1) (3.07,-.7) (3.2,-0.46) (3.5,-0.3) (3.8,-0.46) (3.93,-.7) (4,-1)};
	\draw plot [smooth] coordinates {(1,-2) (1.07,-1.7) (1.2,-1.46) (1.5,-1.3) (1.8,-1.46) (1.93,-1.7) (2,-2)};
	\draw (3,-1) -- (3,-2);
	\draw (4,-1) -- (4,-2);
	\draw[red,thick] plot [smooth,tension=.4] coordinates {(0,-2) (0.5,-2.4) (3.5,-2.5) (4,-3)};
	\draw plot [smooth] coordinates {(1,-2) (0.9,-2.3) (0.1,-2.7) (0,-3)};
	\draw plot [smooth] coordinates {(2,-2) (1.9,-2.3) (1.1,-2.7) (1,-3)};
	\draw plot [smooth] coordinates {(3,-2) (2.9,-2.3) (2.1,-2.7) (2,-3)};
	\draw plot [smooth] coordinates {(4,-2) (3.9,-2.3) (3.1,-2.7) (3,-3)};

	\draw plot [smooth] coordinates {(0,-3) (0.2,-3.3) (1.8,-3.6) (2,-4)};
	\draw plot [smooth] coordinates {(1,-3) (0.9,-3.3) (0.1,-3.7) (0,-4)};
	\draw plot [smooth] coordinates {(2,-3) (1.9,-3.3) (1.1,-3.7) (1,-4)};
	\draw (3,-3) -- (3,-4);
	\draw[red, thick] (4,-3) -- (4,-4);

	\node[draw=black,fill=white] at (3,-1) {$A$};
	\node[draw=black,fill=white,inner sep=2pt] at (4,-1) {$D(A)$};
	\node[draw=black,fill=white] at (1,-1.9) {$S$};
	\node[draw=black,fill=white] at (2,-1.9) {$S$};

	\node[draw=black,fill=white] at (0,-4.3) {$S$};
	\node[draw=black,fill=white] at (1,-4.3) {$A$};
	\node[draw=black,fill=white] at (2,-4.3) {$S$};
	\node[draw=black,fill=white,inner sep=2pt] at (3,-4.3) {$D(A)$};
	\node[draw=red,fill=white] at (4,-4.3) {$S$};
	\node[draw=black,fill=white] at (5,-4.3) {$A$};

	\node at (7,-2.5) {$=$};

	\node[draw=red,fill=white] at (13,0.3) {$S$};
	\node[draw=black,fill=white] at (14,0.3) {$A$};
	
	\draw[red,thick] (13,0) -- (13,-3);
	\draw (14,0) -- (14,-3);
	\draw plot [smooth] coordinates {(11,-1) (11.07,-.7) (11.2,-0.46) (11.5,-0.3) (11.8,-0.46) (11.93,-.7) (12,-1)};
	\draw plot [smooth] coordinates {(9,-2) (9.07,-1.7) (9.2,-1.46) (9.5,-1.3) (9.8,-1.46) (9.93,-1.7) (10,-2)};

	\draw plot [smooth] coordinates {(9,-2) (9.2,-2.3) (10.8,-2.6) (11,-3)};
	\draw plot [smooth] coordinates {(10,-2) (9.9,-2.3) (9.1,-2.7) (9,-3)};
	\draw plot [smooth] coordinates {(11,-2) (10.9,-2.3) (10.1,-2.7) (10,-3)};
	\draw (11,-1) -- (11,-2);
	\draw (12,-1) -- (12,-3);

	\node[draw=black,fill=white] at (11,-1) {$A$};
	\node[draw=black,fill=white,inner sep=2pt] at (12,-1) {$D(A)$};
	\node[draw=black,fill=white] at (9,-1.9) {$S$};
	\node[draw=black,fill=white] at (10,-1.9) {$S$};

	\node[draw=black,fill=white] at (9,-3.3) {$S$};
	\node[draw=black,fill=white] at (10,-3.3) {$A$};
	\node[draw=black,fill=white] at (11,-3.3) {$S$};
	\node[draw=black,fill=white,inner sep=2pt] at (12,-3.3) {$D(A)$};
	\node[draw=red,fill=white] at (13,-3.3) {$S$};
	\node[draw=black,fill=white] at (14,-3.3) {$A$};
\end{tikzpicture}
\caption{\label{fig:muSA} The graphical representation of the identity used for $\mu_{SA} \otimes I_{SA}$.}
\end{center}
\end{figure}
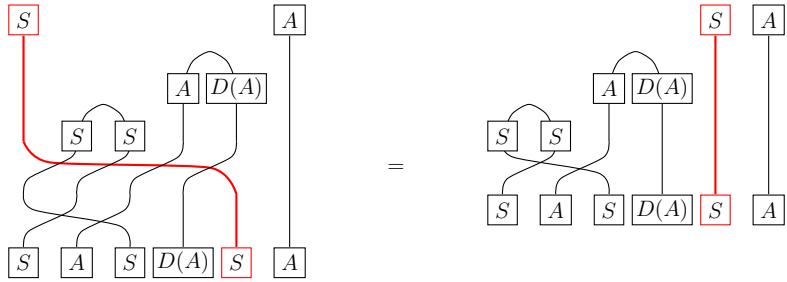
It is easy to see that
\begin{align} \label{eqn:permutation}
&(KK(\sigma_{S, S^2A\otimes D(A)})\otimes I_A)\hat{\otimes}(KK(\sigma_{S, SA})\otimes I_{D(A)\otimes SA})\hat{\otimes}\\
&(I_S\otimes KK(\sigma_{A\otimes SD(A), S})\otimes I_A)\hat{\otimes}(I_{S^2}\otimes KK(\sigma_{A, S})\otimes I_{D(A)\otimes A})\notag\\
=&KK(\sigma_{S^2, S})\otimes I_{A\otimes D(A)\otimes A}\notag\\
=&(-1)^2I_{S^3A\otimes D(A)\otimes A}. \notag
\end{align}
The graphical representation of this identity is shown in Fig.~\ref{fig:permutation}.
\begin{figure}[htp]
\begin{center}
\begin{tikzpicture}[scale=0.7,every node/.style={scale=0.7}]
	\node[draw=red,fill=white] at (0,0.3) {$S$};
	\node[draw=black,fill=white] at (1,0.3) {$S$};
	\node[draw=black,fill=white] at (2,0.3) {$S$};
	\node[draw=blue,fill=white] at (3,0.3) {$A$};
	\node[draw=teal,fill=white,inner sep=2pt] at (4,0.3) {$D(A)$};
	\node[draw=black,fill=white] at (5,0.3) {$A$};
	\draw[red,thick] plot [smooth,tension=.4] coordinates {(0,0) (0.5,-0.4) (3.5,-0.5) (4,-1) (4,-2)};
	\draw plot [smooth] coordinates {(1,0) (0.9,-.3) (0.1,-.7) (0,-1)};
	\draw plot [smooth] coordinates {(2,0) (1.9,-.3) (1.1,-.7) (1,-1)};
	\draw[blue,thick] plot [smooth] coordinates {(3,0) (2.9,-.3) (2.1,-.7) (2,-1)};
	\draw[teal,thick] plot [smooth] coordinates {(4,0) (3.9,-.3) (3.1,-.7) (3,-1)};
	\draw (5,0) -- (5,-4);
	\draw plot [smooth] coordinates {(0,-1) (0.2,-1.3) (1.8,-1.6) (2,-2)};
	\draw plot [smooth] coordinates {(1,-1) (0.9,-1.3) (0.1,-1.7) (0,-2)};
	\draw[blue,thick] plot [smooth] coordinates {(2,-1) (1.9,-1.3) (1.1,-1.7) (1,-2)};
	\draw[teal,thick] (3,-1) -- (3,-2);
	\draw plot [smooth] coordinates {(0,-2) (0,-4)};
	\draw[blue,thick] plot [smooth] coordinates {(1,-2) (1.1, -2.3) (1.89,-2.7) (2,-3)};
	\draw plot [smooth] coordinates {(2,-2) (2.1, -2.3) (2.89,-2.7) (3,-3)};
	\draw[teal,thick] plot [smooth] coordinates {(3,-2) (3.1, -2.3) (3.89,-2.7) (4,-3)};
	\draw[red,thick] plot [smooth] coordinates {(4,-2) (3.6,-2.3) (1.4,-2.6) (1,-3)};
	\draw[red,thick] (1,-3) -- (1,-4);
	\draw[teal,thick] (4,-3) -- (4,-4);
	\draw[blue,thick] plot [smooth] coordinates {(2,-3) (2.1, -3.3) (2.89,-3.7) (3,-4)};
	\draw plot [smooth] coordinates {(3,-3) (2.89,-3.3) (2.1, -3.7) (2,-4)};
	\node[draw=black,fill=white] at (0,-4.3) {$S$};
	\node[draw=red,fill=white] at (1,-4.3) {$S$};
	\node[draw=black,fill=white] at (2,-4.3) {$S$};
	\node[draw=blue,fill=white] at (3,-4.3) {$A$};
	\node[draw=teal,fill=white,inner sep=2pt] at (4,-4.3) {$D(A)$};
	\node[draw=black,fill=white] at (5,-4.3) {$A$};
	
	\node at (7,-0.5) {$=$};
	
	\node[draw=black,fill=white] at (9,0.3) {$S$};
	\node[draw=black,fill=white] at (10,0.3) {$S$};
	\node[draw=black,fill=white] at (11,0.3) {$S$};
	\node[draw=black,fill=white] at (12,0.3) {$A$};
	\node[draw=black,fill=white,inner sep=2pt] at (13,0.3) {$D(A)$};
	\node[draw=black,fill=white] at (14,0.3) {$A$};
	
	\draw plot [smooth] coordinates {(9,0) (9.1,-0.3) (9.87, -0.7) (10,-1)};
	\draw plot [smooth] coordinates {(10,0) (10.1,-0.3) (10.87, -0.7) (11,-1)};
	\draw plot [smooth] coordinates {(11,0) (10.75,-0.3) (9.2,-0.7) (9,-1)};
	\draw (12,0) -- (12,-1);
	\draw (13,0) -- (13,-1);
	\draw (14,0) -- (14,-1);
	
	\node[draw=black,fill=white] at (9,-1.3) {$S$};
	\node[draw=black,fill=white] at (10,-1.3) {$S$};
	\node[draw=black,fill=white] at (11,-1.3) {$S$};
	\node[draw=black,fill=white] at (12,-1.3) {$A$};
	\node[draw=black,fill=white,inner sep=2pt] at (13,-1.3) {$D(A)$};
	\node[draw=black,fill=white] at (14,-1.3) {$A$};
\end{tikzpicture}
\caption{\label{fig:permutation} The graphical representation of equation \eqref{eqn:permutation}.}
\end{center}
\end{figure}
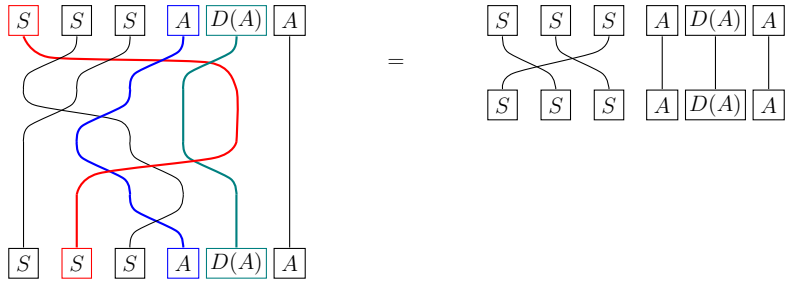

It is straightforward to check that 
\[
	(\mu_{SA}\otimes I_{SA})\hat{\otimes}(I_{SA}\otimes \nu_{SA})=I_{SA}\ .
\]
The other equation in the adjunction formula is verified similarly.
\end{proof}

\begin{lem}\label{cii}
For the $*$-homomorphisms $f : B\to A$ and $D(f) : D(A)\to D(B)$,
%\[SD(B)\xrightarrow{i_{D(f)}}C_{D(f)}\xrightarrow{e_{D(f)}}D(A)\xrightarrow{D(f)}D(B)\xrightarrow{d_{D(f)}}SC_{D(f)},\]
the morphism $\mu \in KK(\mathbb{C}, C_f\otimes (SC_{D(f)}))$ satisfies
\[D_{\mu, \nu_B}(KK(e(f)))=d(D(f)):=(\beta\otimes I_{D(B)})\hat{\otimes}(I_S\otimes KK(i({D(f)}))),\]
\[\mu_{SA}\hat{\otimes}(KK(i(f))\otimes I_{SD(A)})=\mu\hat{\otimes}(I_{C_f}\otimes I_S\otimes KK(e({D(f)}))).\]
\end{lem}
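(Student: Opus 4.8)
The plan is to deduce both identities from the single defining relation $s\mu\hat{\otimes}KK(e(\varphi))=(I_S\otimes\mu_A, I_S\otimes\mu_B)$, transported to the subalgebra $K_{f,D(f)}\subset C_f\otimes C_{D(f)}$ through the isomorphism $r^*$ of Lem.~\ref{K}. Write $\iota\colon K_{f,D(f)}\hookrightarrow C_f\otimes C_{D(f)}$ for the inclusion, set $s\mu'':=s\mu\hat{\otimes}KK((r^*)^{-1})\in KK(S,K_{f,D(f)})$ and $\Xi:=s\mu''\hat{\otimes}KK(\iota)$, so that by construction $\mu=\beta_S\hat{\otimes}(I_S\otimes\Xi)\hat{\otimes}KK(\sigma_{S,C_f}\otimes{\rm id}_{C_{D(f)}})$. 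Since $r^*$ fixes the cone coordinates $a,b$, the map $e(\varphi)$ becomes the pair of coordinate projections $p_1\colon K_{f,D(f)}\to SA\otimes D(A)$, $(F,a,b)\mapsto a$ and $p_2\colon K_{f,D(f)}\to SB\otimes D(B)$, $(F,a,b)\mapsto b$, whence $s\mu''\hat{\otimes}KK(p_1)=I_S\otimes\mu_A$ and $s\mu''\hat{\otimes}KK(p_2)=I_S\otimes\mu_B$. From the boundary description in Lem.~\ref{K} one reads off the two factorizations
\[(I_{C_f}\otimes e(D(f)))\circ\iota=(i(f)\otimes{\rm id}_{D(A)})\circ p_1,\qquad (e(f)\otimes I_{C_{D(f)}})\circ\iota=({\rm id}_B\otimes i(D(f)))\circ p_2',\]
where $p_2'\colon(F,a,b)\mapsto b$ now lands in $B\otimes SD(B)$ and differs from $p_2$ only by the flip interchanging its $S$- and $B$-legs. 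These factorizations are the geometric core of the argument.

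For the second identity I compose $\Xi$ with $I_{C_f}\otimes KK(e(D(f)))$. The first factorization gives
\[\Xi\hat{\otimes}(I_{C_f}\otimes KK(e(D(f))))=s\mu''\hat{\otimes}KK(p_1)\hat{\otimes}(KK(i(f))\otimes I_{D(A)})=(I_S\otimes\mu_A)\hat{\otimes}(KK(i(f))\otimes I_{D(A)}).\]
Substituting this into the formula for $\mu$, commuting $e(D(f))$ past the flip $\sigma_{S,C_f}$ (it acts on the untouched factor), and using $\beta_S\hat{\otimes}(I_{S^2}\otimes\mu_A)=\beta_S\otimes\mu_A$ reduces the right-hand side of the identity to $(\beta_S\otimes\mu_A)\hat{\otimes}(I_S\otimes KK(i(f))\otimes I_{D(A)})\hat{\otimes}(KK(\sigma_{S,C_f})\otimes I_{D(A)})$. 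Expanding $\mu_{SA}$ as in Lem.~\ref{sl} and commuting $i(f)$ through $\sigma_{S,SA}$ shows the left-hand side equals the same expression, which proves the identity.

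For the first identity I avoid $\nu_B$ altogether. The map $D_{\mu,\nu_B}$ is the composite of the isomorphism $x\mapsto\mu\hat{\otimes}(x\otimes I_{SC_{D(f)}})$ with the isomorphism $g\colon y\mapsto(I_{D(B)}\otimes y)\hat{\otimes}(\nu_B\otimes I_{SC_{D(f)}})$, and $g$ has inverse $z\mapsto\mu_B\hat{\otimes}(I_B\otimes z)$ (an easy consequence of the unit co-unit formula for $(\mu_B,\nu_B)$). Hence $D_{\mu,\nu_B}(KK(e(f)))=d(D(f))$ is equivalent to
\[\mu\hat{\otimes}(KK(e(f))\otimes I_{SC_{D(f)}})=\mu_B\hat{\otimes}(I_B\otimes d(D(f))).\]
I compute the left-hand side with the second factorization: $\Xi\hat{\otimes}(KK(e(f))\otimes I_{C_{D(f)}})=(I_S\otimes\mu_B)\hat{\otimes}KK(\sigma_{S,B}\otimes{\rm id}_{D(B)})\hat{\otimes}(I_B\otimes KK(i(D(f))))$, substitute into $\mu$, and reorganize the suspension factors. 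Unfolding $d(D(f))=(\beta_S\otimes I_{D(B)})\hat{\otimes}(I_S\otimes KK(i(D(f))))$ on the right-hand side, one checks that in both expressions the $B$-leg of $\mu_B$ and the first suspension of $\beta_S$ survive, while $i(D(f))$ absorbs the second suspension of $\beta_S$ together with the $D(B)$-leg of $\mu_B$; the two sides therefore coincide.

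The main obstacle is twofold. First, one must justify the two factorizations of $e(\varphi)$ through tensor products of the standard cone maps; this depends on the explicit boundary conditions of Lem.~\ref{K}, and in particular on identifying the kernels of $p_1$ and $p_2$ with $C_f\otimes SD(B)$ and $SA\otimes C_{D(f)}$. Second, and more delicate in practice, is the bookkeeping of the flips and of the two suspension legs carried by $\beta_S$: because $KK(\sigma_{S,S})=-I_{S^2}$, one must check that the surviving and absorbed Bott legs are matched consistently in both identities so that no spurious sign appears. Rendering the composites as string diagrams, as in Fig.~\ref{fig:muSA}, makes these cancellations transparent and is the safest way to certify the equalities.
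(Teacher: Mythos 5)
Your proposal is correct and follows essentially the same route as the paper: the two factorizations you extract from Lem.~\ref{K} are exactly the paper's two commutative diagrams, and both identities are then derived from the defining relation $s\mu\hat{\otimes}KK(e(\varphi))=(I_S\otimes\mu_A, I_S\otimes\mu_B)$ by the same flip/Bott bookkeeping. The only (cosmetic) difference is in the first identity, where you invert the isomorphism $y\mapsto(I_{D(B)}\otimes y)\hat{\otimes}(\nu_B\otimes I_{SC_{D(f)}})$ to avoid $\nu_B$, whereas the paper applies that isomorphism directly and cancels $\mu_B$ against $\nu_B$ via the unit co-unit relation --- the two are the same computation organized in opposite directions.
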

\begin{proof}
Recall the following notation from Lem. \ref{K}
\[(F, a, b)\in K_{f, D(f)}\subset C_0((0, 1]^2, A\otimes D(B))\oplus C_0((0, 1], A\otimes D(A))\oplus C_0((0, 1], B\otimes D(B)).\]
%$K_{f, D(f)}$ and $C_f\otimes C_{D(f)}$ are subalgebras of $$((C_0(0, 1]\otimes A)\oplus B)\otimes ((C_0(0, 1]\otimes D(B))\oplus D(A))$$ and $e_f$ (resp. $e_{D(f)}$) is the restriction of the projection $(C_0(0, 1]\otimes A)\oplus B\to B$ (resp. $(C_0(0, 1]\otimes D(B))\oplus D(A)\to D(A)$).
It is straightforward to check that with this identification
\begin{align*}
&e(f)\otimes {\rm id}_{C_{D(f)}} : K_{f, D(f)}\ni (F, a, b)\mapsto \sigma_{S, B}\otimes{\rm id}_{D(B)}(b)\in B\otimes SD(B)\subset B\otimes C_{D(f)},\\
&{\rm id}_{C_f}\otimes e({D(f)}) : K_{f, D(f)}\ni (F, a, b)\mapsto a\in SA\otimes D(A)\subset C_f\otimes D(A).
\end{align*}
Now we have the following commutative diagrams
\[
\begin{tikzcd}
B\otimes C_{D(f)} & C_f\otimes C_{D(f)} \ar[l,"e(f)\otimes{\rm id}"] \\
B\otimes SD(B)\ar[u,"{\rm id}\otimes i({D(f)})"] & K_{f, D(f)} \ar[u] \ar[d,"r^*"] \\
SB\otimes D(B)\ar[u,"\sigma_{S, B}\otimes {\rm id}"] & C_\varphi\ar[dl,"e(\varphi)"] \\
(SA\otimes D(A))\oplus(SB\otimes D(B)) , \ar[u,"Pr_2"] &
\end{tikzcd}
\]
\[
\begin{tikzcd}
C_f\otimes D(A) & C_f\otimes C_{D(f)} \ar[l,"{\rm id} \otimes e({D(f)})"] \\
SA\otimes D(A) \ar[u,"i(f)\otimes{\rm id}"] & K_{f, D(f)} \ar[u] \ar[d,"r^*"] \\
(SA\otimes D(A))\oplus (SB\otimes D(B))\ar[u,"Pr_1"] & C_\varphi .\ar[l,"e(\varphi)"]
\end{tikzcd}
\]
By the first diagram and the construction of $\mu$,
one verifies 
\begin{align*}
&\mu\hat{\otimes}(KK(e(f))\otimes I_{SC_{D(f)}})\\
=&\beta_S\hat{\otimes}(I_S\otimes s\mu)\hat{\otimes}(I_S\otimes KK(Pr_2\circ e(\varphi)))\hat{\otimes}(I_S\otimes KK(({\rm id}_B\otimes i({D(f)}))\circ (\sigma_{S, B}\otimes {\rm id}_{D(B)})))\\
&\hat{\otimes}(KK(\sigma_{S, B})\otimes I_{C_{D(f)}})\\
=&\beta_S\hat{\otimes}(I_{S^2}\otimes\mu_B)\hat{\otimes}(KK(\sigma_{S^2, B})\otimes I_{D(B)})\hat{\otimes}(I_{B\otimes S}\otimes KK(i({D(f)}))),
\end{align*}
and a direct computation yields
\begin{align*}
&\beta_S\hat{\otimes}(I_{S^2}\otimes\mu_B)\hat{\otimes}(KK(\sigma_{S^2, B})\otimes I_{D(B)})\hat{\otimes}(I_{B\otimes S}\otimes KK(i({D(f)})))\\
=&\mu_B\hat{\otimes}(\beta_S\otimes I_{B\otimes D(B)})\hat{\otimes}(KK(\sigma_{S^2, B})\otimes I_{D(B)})\hat{\otimes}(I_{B\otimes S}\otimes KK(i({D(f)})))\\
=&\mu_B\hat{\otimes}(I_{B\otimes D(B)}\otimes \beta_S)\hat{\otimes}(I_B\otimes KK(\sigma_{D(B), S^2}))\hat{\otimes}(I_{B\otimes S}\otimes KK(i({D(f)}))).
\end{align*}
Thus we have
\begin{align*}
D_{\mu, \nu_B}(KK(e(f)))=&(I_{D(B)}\otimes (\mu\hat{\otimes}(KK(e(f))\otimes I_{SC_{D(f)}})))\hat{\otimes}(\nu_B\otimes I_{SC_{D(f)}})\\
=&(I_{D(B)}\otimes\mu_B)\hat{\otimes}(I_{D(B)\otimes B}\otimes ((I_{D(B)}\otimes \beta_S)\hat{\otimes}KK(\sigma_{D(B), S^2})\hat{\otimes}(I_S\otimes KK(i({D(f)})))))\\
&\hat{\otimes}(\nu_B\otimes I_{SC_{D(f)}})\\
=&(I_{D(B)}\otimes \beta_S)\hat{\otimes}KK(\sigma_{D(B), S^2})\hat{\otimes}(I_S\otimes KK(i({D(f)})))\\
=&(\beta_S\otimes I_{D(B)})\hat{\otimes}(I_S\otimes KK(i({D(f)})))\\
=&d({D(f)})
\end{align*}
Similarly, the second diagram shows
\begin{align*}
&\mu\hat{\otimes}(I_{C_f\otimes S}\otimes KK(e(D(f)))\\
=&\beta_S\hat{\otimes}(I_{S^2}\otimes\mu_A)\hat{\otimes}(I_S\otimes KK(i(f))\otimes I_{D(A)})\hat{\otimes}(KK(\sigma_{S, C_f})\otimes I_{D(A)})\\
=&\mu_{SA}\hat{\otimes}(KK(i(f))\otimes I_{SD(A)}). \qedhere
\end{align*}
\end{proof}
\begin{proof}[{Proof of Thm. \ref{Mt1}}]
By Lem. \ref{ci} and Lem. \ref{cii},
it is enough to show the bijectivity of the natural transformation
\[\mu\hat{\otimes} : KK(P\otimes C_f, Q)\to KK(P, Q\otimes (SC_{D(f)})).\]
Since $P\otimes C_f$ (resp. $Q\otimes SC_{D(f)}$) is identified with $C_{{\rm id}_P\otimes f}$ (resp. $SC_{D(f)\otimes{\rm id}_Q}$),
the Puppe sequence (see \cite[{Thm. 19.4.3.}]{B}) gives the following horizontal exact sequences :
\[
\begin{tikzcd}[column sep=1.6cm]
KK_X(P\otimes SB, Q)\ar[d,"\mu_{SB}\hat{\otimes}"] & KK_X(P\otimes SA, Q) \ar[d,"\mu_{SA}\hat{\otimes}"] \ar[l,"Sf\hat{\otimes}"] & KK_X(P\otimes C_f, Q) \ar[l,"i(f)\hat{\otimes}"] \ar[d,"\mu\hat{\otimes}"] & \ar[l] \\
KK_X(P, Q\otimes SD(B)) & KK_X(P, Q\otimes SD(A)) \ar[l,"\hat{\otimes}SD(f)"] & KK_X(P, Q\otimes SC_{D(f)}) \ar[l,"\hat{\otimes}Se({D(f)})"] & \ar[l]
\end{tikzcd}
\]
\[
\begin{tikzcd}[column sep=1.6cm]
\mbox{} & KK_X(P\otimes B, Q) \ar[l,dash,"e(f)\hat{\otimes}"] \ar[d,"\mu_B\hat{\otimes}"] & KK_X(P\otimes A, Q) \ar[l,"f\hat{\otimes}"] \ar[d,"\mu_A\hat{\otimes}"] \\
\mbox{} & KK_X(P, Q\otimes D(B)) \ar[l,dash,"\hat{\otimes}d({D(f)})"] & KK_X(P, Q\otimes D(A)).\ar[l,"\hat{\otimes}D(f)"]
\end{tikzcd}
\]
%where we write $\hat{\otimes}D(f)=\hat{\otimes}KK(D(f))=\hat{\otimes}D_{\mu_B, \nu_A}(KK(f))$ for short.
Note that $$KK(SD(f))=(-1)^2I_S\otimes D_{\mu_B, \nu_A}(KK(f))=D_{\mu_{SB}, \nu_{SA}}(KK(Sf))$$ (c.f. Proof of Lem. \ref{sl}).
In the above diagram,
the two squares in the middle commute by Lem. \ref{cii},
and the left and right square commute by the definition of the dual morphisms.
The Five-lemma shows the bijectivity of $\mu\hat{\otimes}$.
\end{proof}
\begin{rem}\label{nu}
Applying the exact sequence in the above proof for $P=SC_{D(f)}, Q=\mathbb{C}$,
Lem.~\ref{ci} implies that $\mu$ and $\nu:=(\mu\hat{\otimes})^{-1}(I_{SC_{D(f)}})$ are duality classes
and one has $D_{\mu_{SA}, \nu}(KK(i(f)))=I_S\otimes KK(e({D(f)}))$.
\end{rem}
\begin{cor}\label{Mt1c}
Let $SA\xrightarrow{i} C\xrightarrow{e} B\xrightarrow{f} A$ be an exact triangle of separable nuclear UCT C*-algebras with finitely generated K-groups.
Then there exist separable nuclear UCT C*-algebras $D(SA), D(C), D(B), D(A)$ with finitely generated K-groups
and duality classes

\[\mu_{sa}\in KK(\mathbb{C}, SA\otimes D(SA)),\quad \nu_{sa}\in KK(D(SA)\otimes SA, \mathbb{C}),\]
\[\mu_c\in KK(\mathbb{C}, C\otimes D(C)),\quad \nu_c\in KK(D(C)\otimes C, \mathbb{C}),\]
\[\mu_{b}\in KK(\mathbb{C}, B\otimes D(B)),\quad \nu_b\in KK(D(B)\otimes B, \mathbb{C}),\]
\[\mu_a\in KK(\mathbb{C}, A\otimes D(A)),\quad \nu_a\in KK(D(A)\otimes A, \mathbb{C})\]
such that the dual sequence
\[D(SA)\xleftarrow{D_{\mu_{sa}, \nu_c}(i)}D(C)\xleftarrow{D_{\mu_c, \nu_b}(e)}D(B)\xleftarrow{D_{\mu_b, \nu_a}(f)}D(A)\]
is an exact triangle.
\end{cor}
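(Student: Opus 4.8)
The plan is to reduce to the mapping cone sequence of a genuine $*$-homomorphism and then read the dual triangle directly off of Thm.~\ref{Mt1} and Rem.~\ref{nu}, so that no new computation is required. By the definition of an exact triangle there is a $*$-homomorphism $f \colon B_0 \to A_0$ together with KK-equivalences identifying the given triangle with the mapping cone sequence $SA_0 \xrightarrow{i(f)} C_f \xrightarrow{e(f)} B_0 \xrightarrow{f} A_0$. Since dualizing sends KK-equivalences to KK-equivalences and is compatible with the Kasparov product (see Lem.~\ref{inv} and the isomorphisms $D_{\mu_A,\nu_B}$ in Sec.~\ref{sd}), I would conjugate by the dualized equivalences and apply Lem.~\ref{mn} to conclude that it suffices to treat such a mapping cone sequence. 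I therefore set $A := A_0$, $B := B_0$, $C := C_f$; these are separable nuclear UCT with finitely generated K-groups, and so are $C_f$ (by the six-term exact sequence for $0 \to SA \to C_f \to B \to 0$) and $SA$.

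First I would fix the duality data. By Thm.~\ref{ks} the algebras $A$ and $B$ admit dual algebras with finitely generated K-groups, and after replacing them by KK-equivalent stable Kirchberg algebras I may assume $D(A), D(B)$ are stable Kirchberg algebras and fix duality classes $(\mu_a,\nu_a)$, $(\mu_b,\nu_b)$. Then $D_{\mu_b,\nu_a}(KK(f))$ is represented by a $*$-homomorphism $D(f)\colon D(A)\to D(B)$, so that $D_{\mu_b,\nu_a}(f)=KK(D(f))$. Thm.~\ref{Mt1} now supplies the remaining duals: I put $D(C):=SC_{D(f)}$ with classes $(\mu_c,\nu_c):=(\mu,\nu)$ from Thm.~\ref{Mt1} and Rem.~\ref{nu}, and $D(SA):=SD(A)$ with $(\mu_{sa},\nu_{sa}):=(\mu_{SA},\nu_{SA})$ from Lem.~\ref{sl}. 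All four dual algebras are separable nuclear UCT with finitely generated K-groups; for $SC_{D(f)}$ this follows from the six-term sequence of $0\to SD(B)\to C_{D(f)}\to D(A)\to 0$, since subgroups, quotients and extensions of finitely generated abelian groups are finitely generated.

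With these choices the three dual morphisms are precisely the maps of the Puppe sequence of $D(f)$: one has $D_{\mu_b,\nu_a}(f)=KK(D(f))$ by construction, $D_{\mu_c,\nu_b}(e)=D_{\mu,\nu_b}(KK(e(f)))=d(D(f))$ by Thm.~\ref{Mt1}, and $D_{\mu_{sa},\nu_c}(i)=D_{\mu_{SA},\nu}(KK(i(f)))=I_S\otimes KK(e(D(f)))=KK(Se(D(f)))$ by Rem.~\ref{nu}. Reading the arrows of the dual sequence from right to left, it therefore becomes
\[ D(A)\xrightarrow{KK(D(f))}D(B)\xrightarrow{d(D(f))}SC_{D(f)}\xrightarrow{KK(Se(D(f)))}SD(A), \]
which is exactly the four-term segment of the Puppe sequence of $D(f)\colon D(A)\to D(B)$ following the mapping cone triangle $SD(B)\to C_{D(f)}\to D(A)\to D(B)$ (see \cite[Thm.~19.4.3]{B}). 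Being a rotation of that mapping cone triangle — the underlying KK-equivalence being $C_{Se(D(f))}\sim_{KK}D(B)$ and the degree shift being absorbed by the Bott element $\beta_S$ — it is itself an exact triangle, which would finish the proof.

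I expect the only real subtlety to be the simultaneous bookkeeping of the duality classes: $\mu_c,\nu_c$ and $\mu_{sa},\nu_{sa}$ must be chosen compatibly with the fixed data $(\mu_a,\nu_a,\mu_b,\nu_b)$ so that the identities of Thm.~\ref{Mt1} and Rem.~\ref{nu} hold \emph{simultaneously}, and the dual triangle of the abstract exact triangle must be transported back correctly along the dualized KK-equivalences. Once the three maps have been identified, the triangulated input (rotation of the Puppe sequence) is purely formal.
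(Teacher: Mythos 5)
Your strategy is the same as the paper's --- reduce to a mapping cone sequence, realise the dual morphism by a $*$-homomorphism $D(f)$ between Kirchberg duals, and extract the three dual morphisms from Thm.~\ref{Mt1}, Rem.~\ref{nu} and Lem.~\ref{sl} --- but there is a genuine gap at the final step, exactly at the point you declare ``purely formal''. With your choices $D(A)=D_A$, $D(B)=D_B$, $D(C)=SC_{D(f)}$, $D(SA)=SD_A$, the dual sequence reads $D_A\xrightarrow{KK(D(f))}D_B\xrightarrow{d(D(f))}SC_{D(f)}\xrightarrow{KK(Se(D(f)))}SD_A$. This is \emph{not} of the form $S\tilde{A}\to\tilde{C}\to\tilde{B}\to\tilde{A}$ required by the definition of an exact triangle given before Lem.~\ref{mn}: the first object must literally be the suspension of the last, and $D_A\neq S(SD_A)$ --- they are only KK-equivalent via Bott periodicity. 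So, as stated, your data do not satisfy the conclusion of the corollary. Repairing this forces you to take $D(A)$ and $D(B)$ to be double suspensions, which in turn forces you to replace $(\mu_a,\nu_a)$, $(\mu_b,\nu_b)$ by Bott-twisted duality classes, and this changes the dual morphisms you computed; the identification of the dual sequence then has to be redone.

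The reason this is not innocent bookkeeping is the signs. Rotating the mapping cone triangle of $SD(f)$ produces the map $-S(SD(f))$, and identifying $SC_{D(f)}$ with the cone $C_{SD(f)}$ costs a further sign, because $i(SD(f))=\gamma\circ S(i(D(f)))\circ\sigma_{S,S}$ and $KK(\sigma_{S,S})=-I_{S^2}$. In a triangulated category, changing an even number of the three maps of an exact triangle by a sign yields an isomorphic (hence exact) triangle, while changing an odd number in general does not; so one must actually count. After the Bott identification your sequence has maps $\bigl(KK(S^2D(f)),\,KK(Si(D(f))),\,KK(Se(D(f)))\bigr)$, whereas the rotated cone triangle transported through $\gamma$ has $\bigl(-KK(S^2D(f)),\,-KK(Si(D(f))),\,KK(Se(D(f)))\bigr)$: the discrepancy is two signs, so your triangle is in fact exact --- but verifying this is precisely the content of the paper's proof, which instead chooses $D(A):=S(SD_A)$, $D(B):=S(SD_B)$, $D(C):=C_{SD_f}$ together with duality classes twisted by $\beta$ and $KK(\sigma_{S,S})$, so that the dual morphisms become $e(SD_f)$, $i(SD_f)$ and $-S(SD_f)$ (minus sign included, via $D_{\mu_b,\nu_a}(KK(f))=-KK(S(SD_f))$) and the dual sequence lands, with equalities, on the rotated mapping cone triangle. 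Your proposal assembles all the correct ingredients but omits this verification, and that verification is the substance of the corollary.
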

\begin{proof}
It is enough to prove the statement for the mapping cone sequence
\[SA\xrightarrow{i(f)}C_f\xrightarrow{e(f)}B\xrightarrow{f}A.\]
By Thm. \ref{ks} and \cite[{Thm. 2.5}]{Dk},
one has a dual Kirchberg algebra $D_A$ of $A$ (resp. $D_B$ of $B$) with duality classes $(\mu_A, \nu_A)$ (resp. $(\mu_B, \nu_B)$).
By \cite[{Thm. E}]{G},
the dual morphism $D_{\mu_B, \nu_A}(f)$ is represented by a $*$-homomorphism $D_f : D_A\to D_B$,
and Thm. \ref{Mt1} and Rem. \ref{nu} give duality classes $(\mu, \nu)$ for $C_f$ and $D(C_f):=SC_{D_f}$ satisfying
\begin{enumerate}[(i)]
\item $D_{\mu, \nu_B}(KK(e(f)))=(\beta\otimes I_{D_B})\hat{\otimes}(I_S\otimes KK(i(D_f))),$
\item $D_{\mu_{SA}, \nu}(KK(i(f)))=I_S\otimes KK(e(D_f)),$
\end{enumerate}
where we write $\mu_{SA}:=\mu_A\hat{\otimes}(\beta\otimes I_{A\otimes D_A})\hat{\otimes}(KK(\sigma_{S, SA})\otimes I_{D_A})$.
By $S((C_0(0, 1]\otimes D_B)\oplus D_A)\cong (C_0(0, 1]\otimes SD_B)\oplus SD_A$,
we have an isomorphism $\gamma : SC_{D_f}\to C_{SD_f}$.
We write $D(C_f):=C_{SD_f}$ and define $(\mu_c, \nu_c)$ by
\[\mu_c:=\mu\hat{\otimes}(I_{C_f}\otimes KK(\gamma)),\quad \nu_c:=(KK(\gamma^{-1})\otimes I_{C_f})\hat{\otimes}\nu.\]
We write $D(SA):=SD_A, D(B):=S(SD_B), D(A):=S(SD_A)$ and define $(\mu_b, \nu_b), (\mu_{sa}, \nu_{sa})$ as follows:
\[\mu_b:=\mu_B\hat{\otimes}(I_B\otimes ((\beta\hat{\otimes}KK(\sigma_{S, S}))\otimes I_{D_B})),\quad \nu_b:=(((KK(\sigma_{S, S})\otimes\beta^{-1})\otimes I_{D_B})\otimes I_B)\hat{\otimes}\nu_B,\]
\[\mu_{sa}:=\mu_{SA}, \quad \nu_{sa}:=(KK(\sigma_{SD_A, S})\otimes I_A)\hat{\otimes}(\beta^{-1}\otimes I_{(D_A\otimes A)})\hat{\otimes}\nu_A.\]
The equations $\gamma\circ S(i(D_f))\circ\sigma_{S, S}=i(SD_f)$ and $S(e(D_f))\circ\gamma^{-1}=e(SD_f)$ and (i) and (ii) imply
\[D_{\mu_c, \nu_b}(KK(e(f)))=KK(i(SD_f)),\quad D_{\mu_{sa}, \nu_c}(KK(i(f)))=KK(e(SD_f)).\]
Now we define $(\mu_a, \nu_a)$ by
\[\mu_a:=\mu_A\hat{\otimes}(I_A\otimes (\beta\otimes I_{D_A})),\quad \nu_a:=((\beta^{-1}\otimes I_{D_A})\otimes I_A)\hat{\otimes}\nu_A\]
so that the equation $D_{\mu_b, \nu_a}(KK(f))=-KK(S(SD_f))$ holds.
Now we obtain
\[
\begin{tikzcd}[column sep=2.1cm]
D(SA) \ar[d,equal] & D(C_f) \ar[d,equal] \ar[l,"D_{\mu_{sa}, \nu_c}(i(f))"] & D(B)\ar[d,equal]\ar[l,"D_{\mu_c, \nu_b}(e(f))"] & D(A)\ar[l,"D_{\mu_b, \nu_a}(f)"] \ar[d,equal] \\
SD_A & C_{SD_f} \ar[l,"e(SD_f)"] & S(SD_B)\ar[l,"i(SD_f)"] & S(SD_A),\ar[l,"-S(SD_f)"]
\end{tikzcd}
\]
where the bottom sequence is an exact triangle.
\end{proof}

We will use the following corollary in Section \ref{skD}.
\begin{cor}\label{Sed}
Let $f : B\to A$ be a $*$-homomorphism between dualizable algebras, and let $D, D(B)$ be dual algebras of $C_f$ and $B$, respectively, with duality classes
\[\mu_C\in KK(\mathbb{C}, C_f\otimes D),\quad \nu_C\in KK(D\otimes C_f, \mathbb{C}),\]
\[\mu_B\in KK(\mathbb{C}, B\otimes D(B)),\quad \nu_B\in KK(D(B)\otimes B, \mathbb{C}).\]
Assume that there is a $*$-homomorphism $g : D(B)\to D$ satisfying $$D_{\mu_C, \nu_B}(KK(e(f)))=KK(g).$$
Then, there exists a duality class $\mu\in KK(\mathbb{C}, C_g\otimes A)$ satisfying
\[D_{\mu, \sigma_{B, D(B)}\hat{\otimes}\nu_B}(KK(e(g)))=KK(f).\]
\end{cor}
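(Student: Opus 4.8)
The plan is to apply Theorem~\ref{Mt1} not to $f$ itself but to the map $g$, and then to transport the resulting duality across a Bott-type KK-equivalence $SC_{e(f)}\sim A$.

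First I would check that $g\colon D(B)\to D$ satisfies the hypotheses of Theorem~\ref{Mt1} with dual homomorphism $e(f)\colon C_f\to B$. By Lemma~\ref{inv} the algebra $B$ is a dual of $D(B)$ with classes $(\mu_B\hat{\otimes}\sigma_{B,D(B)},\,\sigma_{B,D(B)}\hat{\otimes}\nu_B)$, and $C_f$ is a dual of $D$ with classes $(\mu_C\hat{\otimes}\sigma_{C_f,D},\,\sigma_{C_f,D}\hat{\otimes}\nu_C)$. The same lemma identifies $D_{\mu_C,\nu_B}(-)^{-1}$ with $D_{\mu_B\hat{\otimes}\sigma_{B,D(B)},\,\sigma_{C_f,D}\hat{\otimes}\nu_C}(-)$, so applying this inverse to the hypothesis $D_{\mu_C,\nu_B}(KK(e(f)))=KK(g)$ yields $D_{\mu_B\hat{\otimes}\sigma_{B,D(B)},\,\sigma_{C_f,D}\hat{\otimes}\nu_C}(KK(g))=KK(e(f))$. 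Thus $e(f)$ represents the dual morphism of $g$, and Theorem~\ref{Mt1} produces a duality class $\tilde{\mu}\in KK(\mathbb{C},C_g\otimes SC_{e(f)})$ exhibiting $C_g$ and $SC_{e(f)}$ as dual and satisfying $D_{\tilde{\mu},\,\sigma_{B,D(B)}\hat{\otimes}\nu_B}(KK(e(g)))=d(e(f))$. The $\nu$-class already matches the one in the target formula, so only the dual object $SC_{e(f)}$ and the right-hand side $d(e(f))$ remain to be adjusted.

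The heart of the argument is to produce a KK-equivalence $\theta\in KK(SC_{e(f)},A)^{-1}$ with $d(e(f))\hat{\otimes}\theta=KK(f)$. I would obtain this from the identification $C_{e(f)}\sim SA$: the cone triangle $SB\to C_{e(f)}\xrightarrow{e(e(f))}C_f\xrightarrow{e(f)}B$ and the rotation $SB\to SA\xrightarrow{i(f)}C_f\xrightarrow{e(f)}B$ of the mapping cone triangle of $f$ both complete $e(f)$ on the right, so Lemma~\ref{mn} with $\alpha=I_B$ and $\beta=I_{C_f}$ yields a KK-equivalence $\psi\in KK(C_{e(f)},SA)^{-1}$ with $KK(i(e(f)))\hat{\otimes}\psi=\pm\,I_S\otimes KK(f)$. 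Substituting this into $d(e(f))=(\beta_S\otimes I_B)\hat{\otimes}(I_S\otimes KK(i(e(f))))$ and using the interchange identity $(a\otimes I_C)\hat{\otimes}(I_B\otimes c)=(I_A\otimes c)\hat{\otimes}(a\otimes I_D)$ recalled at the start of Section~3 gives $d(e(f))=\pm\,KK(f)\hat{\otimes}(\beta_S\otimes I_A)\hat{\otimes}(I_S\otimes\psi^{-1})$. Setting $\theta:=(I_S\otimes\psi)\hat{\otimes}(\beta_S^{-1}\otimes I_A)$, which is a KK-equivalence, and using $(\beta_S\otimes I_A)\hat{\otimes}(\beta_S^{-1}\otimes I_A)=I_A$, one finds $d(e(f))\hat{\otimes}\theta=\pm\,KK(f)$; replacing $\theta$ by $-\theta$ if necessary fixes the sign to $+$.

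Finally I would transport the duality along $\theta$. By the remark following Definition~\ref{dsw}, the class $\mu:=\tilde{\mu}\hat{\otimes}(I_{C_g}\otimes\theta)$ together with a correspondingly modified $\nu$ are duality classes for $C_g$ and $A$, so $\mu\in KK(\mathbb{C},C_g\otimes A)$ is the desired class. Since changing the dual object of $C_g$ from $SC_{e(f)}$ to $A$ via $\theta$ merely post-composes the dual morphism with $\theta$ (a short computation with the product identities, as in the proof of Lemma~\ref{cii}), we obtain
\[
D_{\mu,\,\sigma_{B,D(B)}\hat{\otimes}\nu_B}(KK(e(g)))=D_{\tilde{\mu},\,\sigma_{B,D(B)}\hat{\otimes}\nu_B}(KK(e(g)))\hat{\otimes}\theta=d(e(f))\hat{\otimes}\theta=KK(f),
\]
as required. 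The main obstacle is the third paragraph: establishing $C_{e(f)}\sim SA$ and pinning down the connecting morphism $d(e(f))$ as $KK(f)$ through this equivalence and Bott periodicity, together with the attendant sign bookkeeping.
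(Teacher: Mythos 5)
Your proposal is correct, and its skeleton coincides with the paper's proof: both first invert the hypothesis via Lem.~\ref{inv} so that $e(f)$ becomes the dual morphism of $g$, then apply Thm.~\ref{Mt1} to $g$ (with $e(f)$ in the role of $D(f)$) to obtain a duality class $\bar{\mu}\in KK(\mathbb{C}, C_g\otimes SC_{e(f)})$ with $D_{\bar{\mu},\,\sigma_{B,D(B)}\hat{\otimes}\nu_B}(KK(e(g)))=d(e(f))$, and finally transport this duality along a KK-equivalence $SC_{e(f)}\to A$ that carries $d(e(f))$ to $KK(f)$. The one step where you genuinely diverge is the construction of that equivalence. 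The paper works concretely: it identifies $C_{e(f)}$ with $\{(b(t),a(s))\in (C_0(0,1]\otimes B)\oplus(C_0(0,1]\otimes A)\;|\;f(b(1))=a(1)\}$, uses the extension $0\to SA\to C_{e(f)}\to C_0(0,1]\otimes B\to 0$ with contractible quotient to see that the inclusion $SA\hookrightarrow C_{e(f)}$ is a KK-equivalence intertwining $KK(i(e(f)))$ with $-I_S\otimes KK(f)$, and then absorbs the sign into the Bott twist $\gamma\colon A\xrightarrow{-\beta_S\otimes I_A}S^2A\hookrightarrow SC_{e(f)}$. You instead invoke the rotated triangle $SB\xrightarrow{-Sf}SA\xrightarrow{i(f)}C_f\xrightarrow{e(f)}B$ together with Lem.~\ref{mn} (taking $\alpha=I_B$, $\beta=I_{C_f}$) to produce an abstract $\psi\in KK(C_{e(f)},SA)^{-1}$ with the same intertwining property. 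This is legitimate, since rotation-invariance of exact triangles is part of the triangulated structure established in \cite{MN}, and it spares you the explicit homotopy that the paper's commuting square implicitly requires; the price is that your comparison equivalence is not realised by a $*$-homomorphism and you carry a sign ambiguity, which you resolve correctly by replacing $\theta$ with $-\theta$ (harmless, as $-\theta$ is still a KK-equivalence and the transport of duality classes only uses that). Your closing identity $D_{\mu,\,\sigma_{B,D(B)}\hat{\otimes}\nu_B}(KK(e(g)))=D_{\bar{\mu},\,\sigma_{B,D(B)}\hat{\otimes}\nu_B}(KK(e(g)))\hat{\otimes}\theta$ is exactly the paper's last line with $\theta$ in place of $\gamma^{-1}$.
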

\begin{proof}
The assumption implies
\[D_{\mu_B\hat{\otimes}\sigma_{B, D(B)}, \sigma_{C_f, D}\hat{\otimes}\nu_C}(KK(g))=KK(e(f)),\]
and Thm. \ref{Mt1} gives a duality class $\bar{\mu}\in KK(\mathbb{C}, C_g\otimes SC_{e(f)})$ satisfying
\[D_{\bar{\mu}, \sigma_{B, D(B)}\hat{\otimes}\nu_B}(KK(e(g)))=d(e(f)):=(\beta_S\otimes I_B)\hat{\otimes}(I_S\otimes KK(i(e(f)))).\]
One may identify $C_{e(f)}$ with the algebra
\[\{(b(t), a(s))\in (C_0(0, 1]\otimes B)\oplus (C_0(0, 1]\otimes A)\;|\; f(b(1))=a(1)\}.\]
The exact sequence
\[0\to SA\to C_{e(f)}\to C_0(0, 1]\otimes B\to 0\]
shows that the inclusion $SA\hookrightarrow C_{e(f)}$ is a KK-equivalence making the following diagram commute
\[
\begin{tikzcd}[column sep=2cm]
SB \ar[r,"KK(i(e(f)))"] \ar[d, equal] & C_{e(f)} \\
SB \ar[r,"-I_S\otimes KK(f)"] & SA, \ar[u]
\end{tikzcd}
\]
and a KK-equivalence $\gamma \in KK(A, SC_{e(f)})^{-1}$ defined by
\[\gamma : A\xrightarrow{-\beta_S\otimes I_A}S^2A\hookrightarrow SC_{e(f)}\]
satisfies
\[d(e(f))\hat{\otimes}\gamma^{-1}=KK(f).\]
Thus, the duality class
\[\mu:=\bar{\mu}\hat{\otimes}(I_{C_g}\otimes \gamma^{-1})\in KK(\mathbb{C}, C_g\otimes A)\]
fulfills the required condition.
\end{proof}

%%%%%%%%%%%%%%%%%%%%%%%%%%%%%%%%%%%%%%%%%%%%%%%%%%%%%%%%%%%%%%%%%%%%%%%%%%%%%%%%%%%%%%%%%%%%%%%%%%%%%%%%%%%%%%%%%%%%%%%%%%%%%%%%%%%%%%%%%%%%%%%%%%%%%%%%%%%%%%%%%%%%%%%%%%%%%%%%%%%%%%%%%%%%%%%%%%%%%%%%%%%%%%%%%%%%%%%%%%%%%%%%

\section{Extensions of C*-algebras and KK-theory}

%%%%%%%%%%%%%%%%%%%%%%%%%%%%%%%%%%%%%%%%%%%%%%%%%%%%%%%%%%%%%%%%%%%%%%%%%%%%%%%%%%%%%%%%%%%%%%%%%%%%%%%%%%%%%%%%%%%%%%%%%%%%%%%%%%%%%%%%%%%%%%%%%%%%%%%%%%%%%%%%%%%%%%%%%%%%%%%%%%%%%%%%
\subsection{Extension groups}
We first recall some basic facts about extension groups and refer to \cite{B} for reference.
%We refer \ulrich{to} \cite{B} for the basic facts \ulrich{about} extension groups.
Let $A, B$ be separable, nuclear C*-algebras,
and let $\tau_1, \tau_2 : A\to \mathcal{Q}(B\otimes\mathbb{K})$ be $*$-homomorphisms called Busby invariants.
Two homomorphisms are strongly equivalent (resp. weakly equivalent) if there is a unitary $U\in\mathcal{M}(B\otimes \mathbb{K})$ (resp. $u\in \mathcal{Q}(B\otimes\mathbb{K})$) satisfying $\tau_1={\rm Ad} \pi(U)\circ \tau_2$ (resp. $\tau_1={\rm Ad} u\circ\tau_2$).
The Busby invariants $\tau_1$ and $\tau_2$ are called stably equivalent if there are $*$-homomorphisms $\rho_1, \rho_2 : A\to\mathcal{M}(B\otimes\mathbb{K})$ such that $\tau_1\oplus \pi\circ\rho_1$ and $\tau_2\oplus\pi\circ\rho_2$ are strongly equivalent.
If $\tau : A\to \mathcal{Q}(B\otimes\mathbb{K})$ is injective,
the Busby invariant is called essential,
and it gives an essential extension
\[
\begin{tikzcd}
B\otimes\mathbb{K} \ar[d,equal] \ar[r] & \pi^{-1}(\tau (A)) \ar[d] \ar[r] & A \ar[d,"\tau"]\\
B\otimes\mathbb{K} \ar[r] & \mathcal{M}(B\otimes\mathbb{K})\ar[r,"\pi"] & \mathcal{Q}(B\otimes\mathbb{K}).
\end{tikzcd}
\]
For a unital C*-algebra $A$,
the Busby invariant is called unital if $\tau : A\to \mathcal{Q}(B\otimes\mathbb{K})$ is unital,
and the corresponding extension is called unital extension.
We denote by $Ext(A, B\otimes\mathbb{K})$ the group of stable equivalence classes of the Busby invariants.
It is well-known that the group $Ext(A, B\otimes\mathbb{K})$ can be naturally identified with $KK^1(A, B)$.

Recall that every element of $KK(A, B)$ is represented by a Cuntz pair $[\phi_0, \phi_1]$,
where $\phi_0, \phi_1 : A\to \mathcal{M}(B\otimes\mathbb{K})$ are $*$-homomorphisms satisfying $\phi_0(a)-\phi_1(a)\in B\otimes\mathbb{K}$,
and one has $KK(f)=[f\otimes e, 0]$ for a $*$-homomorphism $f : A\to B$.
The pull-back of the extension $$SB\otimes\mathbb{K}\to C_0(0, 1]\otimes B\otimes\mathbb{K}\xrightarrow{ev_1} B\otimes\mathbb{K}$$ by $f\otimes e : A\to B\otimes\mathbb{K}$ gives an element of $Ext(A, SB\otimes\mathbb{K})$,
and this extends to the following natural isomorphism
\begin{equation} \label{eqn:Blackadar-19.2.6}
	\eta_{A, B} : KK(A, B)\ni [\phi_0, \phi_1]\mapsto [\tau_{[\phi_0, \phi_1]}]\in Ext(A, SB\otimes\mathbb{K}),
\end{equation}
where the Busby invariant is defined by (see \cite[{19.2.6.}]{B} and the proof in the appendix)
\begin{equation} \label{eqn:Blackadar-def}
	\tau_{[\phi_0, \phi_1]}(a)=\pi (t\phi_0(a)+(1-t)\phi_1(a))\in \mathcal{Q}(SB\otimes\mathbb{K}).
\end{equation}
Here, we identify the function $[0, 1]\ni t\mapsto t\phi_0(a)+(1-t)\phi_1(a) \in\mathcal{M}(B\otimes\mathbb{K})$ with an element of $C[0, 1]\otimes\mathcal{M}(B\otimes\mathbb{K})\subset \mathcal{M}(SB\otimes\mathbb{K})$.
\begin{rem}
Note that the definition of $\tau_{[\phi_0, \phi_1]}$ is slightly different from the one in \cite[{19.2.6.}]{B},
and the difference comes from the definition of  mapping cone algebra.
To construct the mapping cone, we use $C_0(0, 1]$ and the reference uses $C_0[0, 1)$.
\end{rem}
\begin{rem}\label{es}
%By the flip isomorphism $\sigma : S^{\otimes 2}\ni f\otimes h\mapsto h\otimes f\in S^{\otimes 2}$,
The extension $\tau_{[S\phi_0, S\phi_1]}$ is identified with 
\[SA\xrightarrow{{\rm id}_S\otimes \tau_{[\phi_0, \phi_1]}}S\mathcal{Q}(SB\otimes\mathbb{K})\subset\mathcal{Q}(S^{2}B\otimes \mathbb{K})\xrightarrow{\sigma_{S, S}\otimes {\rm id}_{B\otimes\mathbb{K}}}\mathcal{Q}(S^{2}B\otimes\mathbb{K}).\]
Thus,
one has $\eta_{SA, SB}(I_S\otimes x)=-I_S\otimes \eta_{A, B}(x)$ for $x\in KK(A, B)$.
\end{rem}

Since $A$ is nuclear,
every essential extension 
\[SB\otimes\mathbb{K}\to E\xrightarrow{\pi_E} A\]
is semi-split and the morphism $KK(j(E))\in KK(SB\otimes\mathbb{K}, C_{\pi_E})$ is known to be a KK-equivalence (see \cite[{Thm. 19.5.5.}]{B}).
\begin{lem}[{c.f. \cite[{Lem. 19.5.6, Thm. 19.5.7.}]{B}}]\label{li}
Let $E:=\pi^{-1}(\tau(A))\subset \mathcal{M}(SB\otimes\mathbb{K})$ be an essential  extension of $A$ with the quotient map $\pi_E : E\ni x\mapsto \tau^{-1}(\pi(x)) \in A$.
Then, we have \[-I_S\otimes (\eta_{A, B}^{-1}([\tau])\hat{\otimes}KK(e))=KK(i({\pi_E}))\hat{\otimes}KK(j(E))^{-1}.\]
In particular,
the following sequence is an exact triangle:
\[SA\xrightarrow{-I_S\otimes (\eta_{A, B}^{-1}([\tau])\hat{\otimes}KK(e))}SB\otimes\mathbb{K}\to E\xrightarrow{\pi_E} A.\]
\end{lem}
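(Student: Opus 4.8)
The plan is to separate the statement into the single nontrivial assertion, namely the displayed identity in $KK(SA, SB\otimes\mathbb{K})$, and then to derive the ``exact triangle'' clause from it by a formal comparison argument. Throughout write $x:=\eta_{A,B}^{-1}([\tau])\in KK(A,B)$ and $\delta:=KK(i(\pi_E))\hat\otimes KK(j(E))^{-1}$, so that the identity to be proved reads $\delta=-I_S\otimes(x\hat\otimes KK(e))$. Note that $\delta$ is well defined because $A$ is nuclear, so the extension is semi-split and $KK(j(E))$ is a KK-equivalence by \cite[{Thm. 19.5.5.}]{B}.

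First I would dispose of the exact triangle, granting the identity. The mapping cone sequence $SA\xrightarrow{i(\pi_E)}C_{\pi_E}\xrightarrow{e(\pi_E)}E\xrightarrow{\pi_E}A$ is an exact triangle by construction, and I compare it with $SA\xrightarrow{\delta}SB\otimes\mathbb{K}\xrightarrow{\iota}E\xrightarrow{\pi_E}A$, where $\iota$ is the inclusion of the ideal, using the vertical KK-equivalences $I_{SA}=I_S\otimes I_A$, $KK(j(E))$, $I_E$ and $I_A$. The right square is trivial; the left square commutes because $\delta\hat\otimes KK(j(E))=KK(i(\pi_E))$ by definition of $\delta$; and the middle square reduces to the identity $e(\pi_E)\circ j(E)=\iota$, which is immediate from $j(E)(y)=(0,y)$ and $e(\pi_E)(a(t),b)=b$. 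This is precisely the data exhibiting an exact triangle in the sense recalled above, and substituting $\delta=-I_S\otimes(x\hat\otimes KK(e))$ gives the stated triangle.

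The heart of the matter is the identity, for which I would follow \cite[{Lem. 19.5.6, Thm. 19.5.7.}]{B}. Represent $x$ by a Cuntz pair $[\phi_0,\phi_1]$ with $\phi_i\colon A\to\mathcal{M}(B\otimes\mathbb{K})$ and $\phi_0-\phi_1$ valued in $B\otimes\mathbb{K}$; then $[\tau]=[\tau_{[\phi_0,\phi_1]}]$ in $Ext(A,SB\otimes\mathbb{K})$ by definition \eqref{eqn:Blackadar-def} of $\eta_{A,B}$. Since $\delta$ depends only on the class $[\tau]\in Ext(A,SB\otimes\mathbb{K})$ --- strong equivalence produces isomorphic extensions, and adjoining a degenerate summand $\pi\circ\rho$ adds a contractible direct summand to $E$ and leaves $i(\pi_E)\hat\otimes j(E)^{-1}$ fixed --- I may normalize $\tau=\tau_{[\phi_0,\phi_1]}$, i.e.\ $\tau(a)=\pi(t\phi_0(a)+(1-t)\phi_1(a))$. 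With this normalization $E=\pi^{-1}(\tau(A))$ is explicit, and the computation of \cite[{Lem. 19.5.6.}]{B} identifies $C_{\pi_E}$, the maps $i(\pi_E)$ and $j(E)$, and finally the composite $\delta$ with the suspension of the stabilized class $x\hat\otimes KK(e)$, up to the sign discussed next.

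The step I expect to be the real obstacle is getting the global sign $-I_S$ right, since it is produced by two sources that must be kept consistent. On the one hand, the flip relation $KK(\sigma_{S,S})=-I_{S^2}$ enters through the description of the suspended extension in Rem.~\ref{es}, which is exactly the statement $\eta_{SA,SB}(I_S\otimes x)=-I_S\otimes\eta_{A,B}(x)$. On the other hand, the cone convention $S=C_0(0,1)$ together with $C_0(0,1]$ used here (rather than the $C_0[0,1)$ of \cite[{19.2.6.}]{B}) reverses the orientation of the cone parameter and shifts $t\mapsto 1-t$ in $\tau_{[\phi_0,\phi_1]}$. To keep both signs visible I would first verify the identity on the class of an honest $*$-homomorphism $f\colon A\to B$, where $[\tau]$ is the pull-back of the cone extension $SB\otimes\mathbb{K}\to C_0(0,1]\otimes B\otimes\mathbb{K}\xrightarrow{ev_1}B\otimes\mathbb{K}$ along $f\otimes e$ and naturality of the connecting map exhibits both signs transparently, and then run the same cone computation for a general Cuntz pair. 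Matching the Bott/suspension isomorphism implicit in $\eta_{A,B}$ against $KK(j(E))^{-1}$ with consistent signs is the delicate point.
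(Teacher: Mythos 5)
Your handling of the exact-triangle clause is fine and coincides with the paper's: the same comparison diagram with vertical arrows $I_{SA}$, $KK(j(E))$, $I_E$, $I_A$, where the left square commutes by definition of $\delta$ and the middle one reduces to $e(\pi_E)\circ j(E)=\iota$. The genuine gap is in the displayed identity itself, which is the whole content of the lemma and which your proposal never actually establishes. After normalizing $\tau=\tau_{[\phi_0,\phi_1]}$ you defer the computation to \cite[Lem.~19.5.6, Thm.~19.5.7]{B} ``up to the sign,'' and for the sign you offer only a plan: check the identity on classes of honest $*$-homomorphisms, then ``run the same cone computation for a general Cuntz pair.'' But the sign and the exact shape of the formula (the factor $I_S\otimes(-)$, the stabilization by $KK(e)$) are precisely what is at stake here: the paper's conventions differ from Blackadar's (mapping cones built from $C_0(0,1]$ rather than $C_0[0,1)$, hence the modified Busby invariant in \cref{eqn:Blackadar-def}), which is exactly why the paper writes out its own proof rather than quoting 19.5.6/19.5.7. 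Checking homomorphism classes cannot close this gap --- not every element of $KK(A,B)$ is of the form $KK(f)$ --- and the ``general Cuntz pair'' computation, which you yourself flag as the delicate point, is left unexecuted. In addition, your justification for the normalization step is incorrect as stated: adjoining a degenerate summand $\pi\circ\rho$ does not add a contractible direct summand to $E$; the new extension algebra is a split extension of $E$ by a corner of $SB\otimes\mathbb{K}\otimes\mathbb{M}_2$, not a direct sum, and the invariance of $\delta$ has to be argued via a morphism of extensions (identity on $A$, corner embedding on the ideals) together with naturality of the mapping cone, so that $\delta$ changes exactly by the corner-embedding KK-equivalence. That is fixable, but as written the reduction rests on a false claim.

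For comparison, the paper's proof avoids choosing any representative of $[\tau]$ at all. It introduces the canonical extension $S(SB\otimes\mathbb{K})\to C_0(0,1]\otimes E\to C_{\pi_E}$ over the mapping cone and computes its class in $Ext(C_{\pi_E}, S(SB\otimes\mathbb{K}))$ in two ways: pulling back along $j(E)$ gives the cone extension of $SB\otimes\mathbb{K}$, i.e.\ $\eta_{SB\otimes\mathbb{K},SB}([\mathrm{id}_{SB\otimes\mathbb{K}},0])$, and the degenerate-module identity $[\mathrm{id}_{SB\otimes\mathbb{K}},0]\hat{\otimes}KK(e)=I_{SB\otimes\mathbb{K}}$ identifies the class of this extension as $\eta_{C_{\pi_E},SB}\bigl(KK(j(E))^{-1}\hat{\otimes}KK(e)^{-1}\bigr)$; pulling back along $i(\pi_E)$ gives $[\mathrm{id}_S\otimes\tau]$. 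Naturality of $\eta$ in the first variable then yields the identity, with the sign produced entirely by Rem.~\ref{es} (ultimately by $KK(\sigma_{S,S})=-I_{S^2}$). This double-pullback argument is the missing idea: it replaces your normalize-and-compute strategy by a purely functorial one in which the sign is forced rather than tracked by hand, and it is the route you would need (or else a completed Cuntz-pair calculation in the paper's conventions) to turn your outline into a proof.
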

\begin{proof}
For an element  $x\in KK(-, S^iB)$,
we use the following short-hand notation $$x\hat{\otimes}KK(e):= x\hat{\otimes}(I_{S^iB}\otimes KK(e))\in KK(-, S^iB\otimes \mathbb{K}).$$
%for short.
One has the following diagram
\[
\begin{tikzcd}[column sep=3.2cm]
SA \ar[d,equal] \ar[r,"-I_S \otimes (\eta_{A, B}^{-1}({[\tau]})\hat{\otimes}KK(e))"] & SB \otimes\mathbb{K} \ar[d,"j(E)"] \ar[r] & E\ar[d,equal]\ar[r,"\pi_E"] & A \ar[d,equal]\\
SA \ar[r,"i({\pi_E})"] & C_{\pi_E} \ar[r,"e({\pi_E})"] & E\ar[r,"\pi_E"] & A,
\end{tikzcd}
\]
where the middle and right squares commute.
The surjective map 
$$C_0(0, 1]\otimes E\ni x(t)\mapsto (\pi_E(x(t)), x(1))\in C_{\pi_E}$$
gives an essential extension
$$S(SB\otimes\mathbb{K})\to C_0(0, 1]\otimes E\to C_{\pi_E}.$$
For the pull-back $j(E)^* : Ext(C_{\pi_E}, S(SB\otimes\mathbb{K}))\to Ext(SB\otimes\mathbb{K}, S(SB\otimes\mathbb{K}))$,
one has 
\begin{align*}
&j(E)^*([S(SB\otimes\mathbb{K})\to C_0(0, 1]\otimes E\to C_{\pi_E}])\\
=&[S(SB\otimes\mathbb{K})\to C_0(0, 1]\otimes (SB\otimes\mathbb{K})\to SB\otimes\mathbb{K}]\\
=&[\tau_{[{\rm id}_{SB\otimes \mathbb{K}}, 0]}]\\
=&\eta_{SB\otimes\mathbb{K}, SB}([{\rm id}_{SB\otimes\mathbb{K}}, 0]) \in Ext(SB\otimes\mathbb{K}, S(SB\otimes\mathbb{K})).
\end{align*}
Since $[{\rm id}_{SB\otimes\mathbb{K}}, 0]\hat{\otimes} KK(e)$ is represented by the Cuntz pair $[\phi, 0]$ with
\[\phi :  SB\otimes \mathbb{K}= SB\otimes e\otimes \mathbb{K}\subset SB\otimes \mathbb{K}\otimes \mathbb{K},\]
one has 
\begin{align*}
[{\rm id}_{SB\otimes\mathbb{K}}, 0]\hat{\otimes} KK(e)=&[\phi, 0]\\
=&[{\rm id}_{SB\otimes\mathbb{K}}\otimes e, 0]\\
=& I_{SB\otimes\mathbb{K}}+({\rm degenerate \;module})\\
=&I_{SB\otimes \mathbb{K}},
\end{align*}
and the naturality of $j(E)\hat{\otimes}-$ and $\eta_{-, -}$ implies
%and the naturality of $\eta_{-, -}$ implies 
\begin{align*}
&\eta_{C_{\pi_E}, SB}(KK(j(E))^{-1}\hat{\otimes}KK(e)^{-1})\\
=&[S(SB\otimes\mathbb{K})\to C_0(0, 1]\otimes E\to C_{\pi_E}]\in Ext(C_{\pi_E}, S(SB\otimes\mathbb{K})).
\end{align*}
One has
\begin{align*}
&i({\pi_E})^*([S(SB\otimes\mathbb{K})\to C_0(0, 1]\otimes E\to C_{\pi_E}])\\
=&[S(SB\otimes\mathbb{K})\to SE\xrightarrow{S\pi_E} SA]\\
=&[{\rm id}_S\otimes \tau]\in Ext(SA, S(SB\otimes\mathbb{K})).
\end{align*}
Rem. \ref{es} and the naturality of $\eta_{-,-}$ show
\[
KK(i({\pi_E}))\hat{\otimes}KK(j(E))^{-1}\hat{\otimes}KK(e)^{-1}=\eta_{SA, SB}^{-1}(I_S\otimes [\tau])=-I_S\otimes \eta_{A, B}^{-1}([\tau]). \qedhere
\]
\end{proof}
\subsection{Strong extension groups}
For a unital C*-algebra $A$,
we identify the mapping cone $C_{u_A}$ with the algebra 
\[\{a(t)\in C_0(0, 1]\otimes A \; |\; a(1)\in\mathbb{C}1_A\}.\]
For a separable C*-algebra $B$ and a unital, separable,  nuclear C*-algebra $A$,
G. Skandalis \cite{SK} introduces the strong extension group
\[Ext_s(A, B\otimes\mathbb{K}):=\{\tau : A\to \mathcal{Q}(B\otimes\mathbb{K})\;|\; \tau : {\rm unital\; extension}\}/_\sim\]
where $\tau_1\sim\tau_2$ means that there exist unital $*$-homomorphisms $\rho_1, \rho_2 : A\to \mathcal{M}(B\otimes\mathbb{K})$ and a unitary $U\in \mathbb{M}_2(\mathcal{M}(B\otimes\mathbb{K}))$ satisfying
\[\tau_1\oplus \pi\circ\rho_1={\rm Ad}U\circ(\tau_2\oplus\pi\circ\rho_2).\]
He shows the following theorem.
\begin{thm}[{\cite[Thm. 2.3., Cor. 2.4.]{SK}}]\label{SKase}
\begin{enumerate}
\item{}There exists a 6-term exact sequence
\[
\begin{tikzcd}
K_0(B) \ar[r] & Ext_s(A, B\otimes\mathbb{K}) \ar[r] & Ext(A, B\otimes \mathbb{K}) \ar[d] \\
Ext(A, SB\otimes\mathbb{K}) \ar[u] & Ext_s(A, SB\otimes\mathbb{K}) \ar[l] & K_0(SB).\ar[l]
\end{tikzcd}
\]
\item{} We have a natural isomorphism
\[Ext_s(A, B\otimes\mathbb{K})\to Ext (C_{u_A}, SB\otimes\mathbb{K})\]
sending $[B\otimes\mathbb{K}\to E\to A]$ to $[SB\otimes\mathbb{K}\to C_{u_E}\to C_{u_A}]$.
\end{enumerate}
\end{thm}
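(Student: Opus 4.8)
The plan is to establish part (2) first---the concrete isomorphism $\Theta\colon Ext_s(A, B\otimes\mathbb{K})\to Ext(C_{u_A}, SB\otimes\mathbb{K})$, $[E]\mapsto[C_{u_E}]$---since it is the part that carries the real content, and then to deduce the six-term sequence in part (1) almost for free by feeding the mapping cone triangle of the unit $u_A\colon\mathbb{C}\to A$ into the cohomological functor $Ext(-,SB\otimes\mathbb{K})\cong KK(-,S^2B)$ and reading off the terms with the help of part (2).

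To construct $\Theta$, I would start from a unital extension $B\otimes\mathbb{K}\to E\xrightarrow{\pi_E}A$. Since $\pi_E$ is unital, the square of unit maps $u_E\colon\mathbb{C}\to E$ and $u_A\colon\mathbb{C}\to A$ commutes, inducing a $*$-homomorphism $C_{u_E}\to C_{u_A}$, $a(t)\mapsto\pi_E(a(t))$. The key computation is its kernel: if $a(t)\in C_{u_E}$ with $a(1)=\lambda 1_E$ lies in the kernel, then $a(t)\in B\otimes\mathbb{K}$ for all $t$ and $\lambda 1_A=0$; because $1_E=1_{\mathcal{M}(B\otimes\mathbb{K})}\notin B\otimes\mathbb{K}$ this forces $\lambda=0$ and hence $a(1)=0$, so the kernel is exactly $SB\otimes\mathbb{K}$. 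Surjectivity follows by lifting $\pi_E$ pointwise while pinning the endpoint to $\mathbb{C}1_E$. This produces the extension $SB\otimes\mathbb{K}\to C_{u_E}\to C_{u_A}$ defining $\Theta([E])$. I would then check well-definedness by translating the relation defining $Ext_s$ into stable equivalence of cone extensions: a unitary $U\in\mathbb{M}_2(\mathcal{M}(B\otimes\mathbb{K}))$ and unital $\rho_i\colon A\to\mathcal{M}(B\otimes\mathbb{K})$ implementing $\tau_1\oplus\pi\circ\rho_1={\rm Ad}U\circ(\tau_2\oplus\pi\circ\rho_2)$ yield, after coning the units, the degenerate summands and a unitary in $\mathbb{M}_2(\mathcal{M}(SB\otimes\mathbb{K}))$ realizing stable equivalence. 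That $\Theta$ is a group homomorphism is immediate, since coning commutes with the $\mathbb{M}_2$/Cuntz-sum addition on both sides.

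The main obstacle is the bijectivity of $\Theta$. I would prove it by constructing an inverse directly: given an extension $SB\otimes\mathbb{K}\to\mathcal{E}\to C_{u_A}$ (automatically semisplit, since $C_{u_A}$ is nuclear as an extension of $\mathbb{C}$ by $SA$), restriction along $i(u_A)\colon SA\hookrightarrow C_{u_A}$ recovers the underlying weak class in $Ext(SA,SB\otimes\mathbb{K})\cong Ext(A,B\otimes\mathbb{K})$, while the behaviour over the cone point, detected by the evaluation $e(u_A)\colon C_{u_A}\to\mathbb{C}$, encodes the unital lift; reassembling these two pieces yields a unital extension of $A$, and one verifies it is inverse to $\Theta$ and descends to equivalence classes. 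Alternatively, once the two six-term sequences below are known, bijectivity can be obtained by a five-lemma comparison through the natural transformation $\Theta$. I expect the careful bookkeeping of the unit and of the strong equivalence relation, rather than any deep new idea, to be the technical heart of this step.

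Finally, for part (1) I would apply the cohomological functor $X\mapsto Ext(X,SB\otimes\mathbb{K})\cong KK(X,S^2B)$ to the Puppe sequence of $u_A$ (see \cite[Thm. 19.4.3.]{B})
\[\cdots SC_{u_A}\xrightarrow{Se(u_A)}S\mathbb{C}\xrightarrow{Su_A}SA\xrightarrow{i(u_A)}C_{u_A}\xrightarrow{e(u_A)}\mathbb{C}\xrightarrow{u_A}A\cdots,\]
which turns the mapping cone triangle $SA\to C_{u_A}\to\mathbb{C}\to A$ into a six-term exact sequence. Using the identifications $Ext(\mathbb{C},SB\otimes\mathbb{K})\cong K_0(B)$ and $Ext(S\mathbb{C},SB\otimes\mathbb{K})\cong K_0(SB)$, the suspension isomorphism $Ext(SA,SB\otimes\mathbb{K})\cong Ext(A,B\otimes\mathbb{K})$ coming from Rem.~\ref{es}, the Bott-type identifications $KK(A,S^2B)\cong Ext(A,SB\otimes\mathbb{K})$ and $KK(SC_{u_A},S^2B)\cong Ext_s(A,SB\otimes\mathbb{K})$ (the latter from part (2) applied with $SB$ in place of $B$), and part (2) itself to identify the $C_{u_A}$-term $KK(C_{u_A},S^2B)$ with $Ext_s(A,B\otimes\mathbb{K})$, the long exact sequence collapses into exactly the displayed six-term diagram. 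Naturality of all these identifications in $A$ and $B$ then gives the naturality of the sequence.
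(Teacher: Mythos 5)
Your construction of $\Theta$ (the cone-of-the-unit map $[E]\mapsto[SB\otimes\mathbb{K}\to C_{u_E}\to C_{u_A}]$), the well-definedness check, and the derivation of part (1) from part (2) by applying $Ext(-,SB\otimes\mathbb{K})\cong KK(-,S^2B)$ to the Puppe sequence of $u_A$ are all sound, and they match the strategy the paper itself follows for its refinement of this result in the case $B=\mathbb{C}$ (the paper imports Thm.~\ref{SKase} from \cite{SK} and proves the refined statement as Thm.~\ref{mcs}). The genuine gap is the bijectivity of $\Theta$, which is the actual content of part (2). Your primary route --- ``reassembling'' the weak class $i(u_A)^*[\mathcal{E}]$ and the behaviour under $e(u_A)$ into a unital extension of $A$ --- is not a construction: an arbitrary extension of $C_{u_A}$ by $SB\otimes\mathbb{K}$ is not literally of the form $C_{u_E}$, only (at best) stably equivalent to one, and producing a unital extension of $A$ inducing it \emph{is} the surjectivity problem, not a means of solving it. Your fallback route is circular as written: the five-lemma comparison needs the six-term (or at least a four-term) exact sequence for $Ext_s$ as an \emph{independent} input, but in your proposal the only proof of that sequence (part (1)) passes through part (2).

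What is missing is precisely the independent, definition-level handle on $Ext_s$ that the paper supplies before running the five lemma. Concretely, the paper proves (Lem.~\ref{mc1}) exactness of
\[K_1(\tau_A(A)'\cap\mathcal{Q}(\mathbb{K}))\to K_1(\mathcal{Q}(\mathbb{K}))\xrightarrow{\iota_A}Ext_s(A)\xrightarrow{q_A}Ext_w(A)\to 0\]
directly from the equivalence relations (using Voiculescu absorption and unitary implementation), identifies the left-hand term with $Ext(A,S\mathbb{K})$ via Paschke duality (Lem.~\ref{mc2}), and only then compares this ladder with the Puppe sequence $Ext(A,S\mathbb{K})\to Ext(\mathbb{C},S\mathbb{K})\to Ext(C_{u_A},S\mathbb{K})\to Ext(SA,S\mathbb{K})\to 0$ of the triangle $SA\to C_{u_A}\to\mathbb{C}$. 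The technical heart --- which your proposal does not address --- is the compatibility of the two middle squares, above all the identity $m_A\circ\iota_A=e(u_A)^*\circ(-I)$, which the paper verifies by an explicit computation with completely positive lifts and a concrete unitary (proof of Thm.~\ref{mcs}); this is also what later pins down the normalisation $\Psi_A(\iota_A([W]_1))=KK(e(u_A))$ in Cor.~\ref{dDd}. Without an argument of this kind (or an independent proof of Skandalis's six-term sequence), your proof of part (2), and hence of the whole theorem, is incomplete.
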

Following \cite{M0},
we denote by $Ext_s(A)$ the group of strong equivalence classes of unital essential extensions $\tau : A\to \mathcal{Q}(\mathbb{K})$.
%and we write $[\tau]_s=[E]_s\in Ext_s(A)$ for the extension $E:=\pi^{-1}(\tau(A))\subset \mathcal{M}(\mathbb{K})$.
%For the unital extension $\tau$,
Note that Voiculescu's theorem shows $Ext_s(A)=Ext_s(A, \mathbb{K})$,
and the second statement of the above theorem implies $Ext_s(A)\cong KK(C_{u_A}, \mathbb{C})$.

In the rest of this subsection,
we take a closer look at the above theorem in the case of $B=\mathbb{C}$ to fix notation.
For the unital extension $\tau$ with  $E:=\pi^{-1}(\tau(A))\subset \mathcal{M}(\mathbb{K})$,
we write $[\tau]_s=[E]_s\in Ext_s(A)$.
The nuclearlity of $A$ provides a unital completely positive lifting $L_\tau : A\to \mathcal{M}(\mathbb{K})$ of $\tau$,
and the composition of the following maps
\[C_{u_A}\ni a(t) \mapsto L_\tau (a(t))\in C_0(0, 1]\otimes\mathcal{M}(\mathbb{K}),\]
\[C_0(0, 1]\otimes\mathcal{M}(\mathbb{K})\subset \mathcal{M}(S\mathbb{K})\xrightarrow{\pi} \mathcal{Q}(S\mathbb{K})\]
define an essential extension $c(\tau) : C_{u_A}\to\mathcal{Q}(S\mathbb{K})$ with  $\pi^{-1}(c(\tau)(C_{u_A}))=C_{u_E}\subset C_0(0, 1]\otimes\mathcal{M}(\mathbb{K})$.
It is easy to check that $c(\tau)$ is independent of the choice of the unital completely positive lift $L_\tau$.
This construction induces a group homomorphism
\[m_A : Ext_s (A)\ni [\tau]_s=[E]_s\mapsto [c(\tau)]=[S\mathbb{K}\to C_{u_E}\to C_{u_A}]\in Ext(C_{u_A}, S\mathbb{K}).\]
For the above essential extensions $E, C_{u_E}$,
we write $\pi_E :=\tau^{-1}\circ\pi, \; \pi_{C_{u_E}}:=c(\tau)^{-1}\circ \pi$,
and $\pi_{C_{u_E}}={\rm id}_{C_0(0, 1]}\otimes \pi_E$ holds by definition.

We denote by $Ext_w(A)$ the group of weak equivalence classes of unital essential extensions of $A$ by $\mathbb{K}$.
For a general $*$-homomorphism $\sigma : A\to \mathcal{Q}(\mathbb{K})$,
we may assume that the projection $\tau (1_A) \in \mathcal{Q}(\mathbb{K})$ is properly infinite and is Murray--von Neumann equivalent to $1_{\mathcal{Q}(\mathbb{K})}$.
Combining the above and  Voiculescu's absorption theorem (see \cite[{Chap. 15.12.}]{B}),
it is well-known that the natural map
\[Ext_w(A)\ni[\tau]_w\mapsto [\tau]\in Ext(A, \mathbb{K})\]
is an isomorphism.

We denote by $\tau_A:=\pi\circ\rho$ an essential unital trivial extension with an injective unital $*$-homomorphism $\rho : A\to \mathcal{M}(\mathbb{K})$,
and the kernel of the natural surjection
\[q_A : Ext_s(A)\ni [\tau]_s\mapsto [\tau]_w\in  Ext_w(A)\]
is the image of the following group homomorphism
\[\iota_A : K_1(\mathcal{Q}(\mathbb{K}))\ni [u]_1\mapsto [{\rm Ad} u\circ \tau_A]_s\in Ext_s(A),\]
where we identify $K_1(\mathcal{Q}(\mathbb{K}))$ with $U(\mathcal{Q}(\mathbb{K}))/\sim_{h}$ (see \cite[{Lem. 1.2.}]{M0}).
Let $V$ be an isometry such that $e:=1-VV^*\in \mathbb{K}$ is a rank 1 projection,
and we identify $[\pi(V)]_1\in K_1(\mathcal{Q}(\mathbb{K}))$ with $-1={\rm Ind}(V)=-[e]_0\in\mathbb{Z}=K_0(\mathbb{K})$ as in \cite{M}.
For the map $e : \mathbb{C}\to \mathbb{C}e\subset\mathbb{K}$ and a Cuntz pair $[e, 0]=I_\mathbb{C}\in KK(\mathbb{C}, \mathbb{C})$,
we identify $Ext(\mathbb{C}, S\mathbb{K})$ with $K_1(\mathcal{Q}(\mathbb{K}))$ by 
\[-I : K_1(\mathcal{Q}(\mathbb{K}))\ni [\pi (V)]_1\mapsto [\tau_{[e, 0]}] \in Ext(\mathbb{C}, S\mathbb{K}).\]

The groups $Ext_s(A)$ can be computed from KK-groups as shown in the following theorem.
%We show the following theorem to compute $Ext_s(A)$ via KK-groups.
\begin{thm}[{c.f. \cite[Cor. 2.4.]{SK}}]\label{mcs}
For a unital C*-algebra $A$, the map $m_A$ is an isomorphism making the following diagram commute
\[
\begin{tikzcd}
K_1(\mathcal{Q}(\mathbb{K})) \ar[d,"-I"] \ar[r,"\iota_A"] & Ext_s(A) \ar[d,"m_A"] \\
Ext(\mathbb{C}, S\mathbb{K}) \ar[r,"e({u_A})^*"] & Ext(C_{u_A}, S\mathbb{K}).
\end{tikzcd}
\]
%where the bottom horizontal map is the pull-back by the evaluation $e_{u_A} : C_{u_A}\to \mathbb{C}$.
\end{thm}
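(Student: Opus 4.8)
The plan is to read off the isomorphism assertion from Thm.~\ref{SKase}(2) and to reduce the commutativity of the square to an explicit comparison of two Busby invariants $C_{u_A}\to\mathcal{Q}(S\mathbb{K})$. For the isomorphism, note that $Ext_s(A)=Ext_s(A,\mathbb{K})$ by Voiculescu's theorem, and that $m_A$ sends $[\tau]_s=[E]_s$ to the class of $S\mathbb{K}\to C_{u_E}\to C_{u_A}$ because $\pi^{-1}(c(\tau)(C_{u_A}))=C_{u_E}$; hence $m_A$ coincides with the natural isomorphism of Thm.~\ref{SKase}(2) for $B=\mathbb{C}$. Since all four maps in the square are group homomorphisms and $K_1(\mathcal{Q}(\mathbb{K}))\cong\mathbb{Z}$ is generated by $[\pi(V)]_1$, it remains to verify the identity $m_A(\iota_A([\pi(V)]_1))=e(u_A)^*((-I)([\pi(V)]_1))$ on this single generator.

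I first treat the lower-left composite. By the definition of $-I$ one has $e(u_A)^*((-I)([\pi(V)]_1))=[\tau_{[e,0]}\circ e(u_A)]$. Writing $a(1)=\lambda(a)1_A$ for $a\in C_{u_A}$ and combining \eqref{eqn:Blackadar-def} with the identity $KK(e(u_A))=[e(u_A)\otimes e,0]$, one computes $(\tau_{[e,0]}\circ e(u_A))(a)=\pi(t\,\lambda(a)\,e)=\tau_{[e(u_A)\otimes e,0]}(a)$, so that the lower-left composite equals $\eta_{C_{u_A},\mathbb{C}}(KK(e(u_A)))$.

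For the upper-right composite, put $\tau:={\rm Ad}\,\pi(V)\circ\tau_A$ with $\tau_A=\pi\circ\rho$ and take the unital completely positive lift $L_\tau(x)=V\rho(x)V^*+\omega(x)e$ for a state $\omega$ of $A$. Then $c(\tau)(a)=\pi\big(t\mapsto V\rho(a(t))V^*+\omega(a(t))e\big)$, and I would use the orthogonal decomposition $1_{\mathcal{M}(\mathbb{K})}=VV^*+e$. The relations $V^*e=0=eV$ give orthogonal ranges, so $c(\tau)=\pi(g)\oplus\tau_2$, where $\pi(g)(a)=\pi(t\mapsto V\rho(a(t))V^*)={\rm Ad}\,\pi(V)\circ c(\tau_A)$ is a trivial extension, lifted by the $*$-homomorphism $\Phi(a)=(t\mapsto V\rho(a(t))V^*)$, and $\tau_2(a)=\pi(t\mapsto\omega(a(t))e)$. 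The endpoint values $\omega(a(0))=0$ and $\omega(a(1))=\lambda(a)$ show that $t\mapsto\omega(a(t))e$ and $t\mapsto t\,\lambda(a)\,e$ differ by an element of $C_0(0,1)\otimes\mathbb{K}=S\mathbb{K}$, whence $\tau_2=\tau_{[e,0]}\circ e(u_A)$ is exactly the Busby invariant from the previous paragraph.

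It then remains to show $[c(\tau)]=[\tau_2]$, for which I would exhibit an explicit stable equivalence. Taking $\rho_1=0$ and $\rho_2=\Phi$, the Busby invariants $c(\tau)\oplus\pi\rho_1$ and $\tau_2\oplus\pi\rho_2$ live in $M_2(\mathcal{Q}(S\mathbb{K}))$, and conjugation by the constant self-adjoint unitary $U=\left(\begin{smallmatrix} e & 1-e \\ 1-e & e\end{smallmatrix}\right)\in M_2(\mathcal{M}(S\mathbb{K}))$ carries $\mathrm{diag}(c(\tau),0)=\mathrm{diag}(\pi(g)+\tau_2,0)$ to $\mathrm{diag}(\tau_2,\pi(g))$, using that $\tau_2$ is supported under $e$ and $\pi(g)$ under $1-e$. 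This is the required strong equivalence, so $[c(\tau)]=[\tau_2]$ and the square commutes on the generator, hence everywhere. The step I expect to be the main obstacle is precisely this last identification: since the trivial summand $\pi(g)$ occupies the infinite-rank corner while $\tau_2$ is trapped in the \emph{rank-one} corner $\pi(e)\mathcal{Q}(S\mathbb{K})\pi(e)$, the naive ``orthogonal sum equals group sum'' reasoning with the projections $e$ and $1-e$ is unavailable, and one must instead produce the explicit symmetry $U$ above (or, alternatively, invoke Voiculescu absorption for the ample trivial extension $\pi(g)$) to move the trivial part into its own summand without disturbing $\tau_2$.
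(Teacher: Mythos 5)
Your proof is correct, but it divides the work differently from the paper. For the bijectivity of $m_A$ you identify $m_A$ with the natural isomorphism of Thm.~\ref{SKase}(2) for $B=\mathbb{C}$ (via $Ext_s(A)=Ext_s(A,\mathbb{K})$ and $\pi^{-1}(c(\tau)(C_{u_A}))=C_{u_E}$) and quote Skandalis; the paper never uses Thm.~\ref{SKase}(2) here, but instead re-derives bijectivity by a Five-Lemma argument comparing the exact sequence of Lem.~\ref{mc1} with the mapping-cone sequence $Ext(A,S\mathbb{K})\to Ext(\mathbb{C},S\mathbb{K})\to Ext(C_{u_A},S\mathbb{K})\to Ext(SA,S\mathbb{K})\to 0$ through the vertical isomorphisms $p_A$ (Paschke duality, Lem.~\ref{mc2}), $-I$ and $f_A$, with the commutativity of your square as the only nontrivial input. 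Your route is shorter; the paper's is self-contained (Skandalis's result exists only as a 1983 preprint) and produces the full commutative ladder displayed after the proof, which is what Rem.~\ref{dD} and Cor.~\ref{dDd} actually rely on. On the commutativity over the generator $[\pi(V)]_1$ your computation is the paper's argument run in mirror image: you split $c(\mathrm{Ad}\,\pi(V)\circ\tau_A)=\pi\circ\Phi+\tau_2$ along the orthogonal projections $1\otimes VV^*$ and $1\otimes e$, check $\tau_2=\tau_{[e,0]}\circ e(u_A)$ from $t\lambda(a)-\omega(a(t))\in C_0(0,1)$, and conjugate $\mathrm{diag}(c(\tau),0)$ into $\mathrm{diag}(\tau_2,\pi\circ\Phi)$ by the symmetry $\left(\begin{smallmatrix}e&1-e\\1-e&e\end{smallmatrix}\right)$ before absorbing the liftable summand $\pi\circ\Phi$; the paper instead conjugates the lifts $(\mathrm{id}_{C_0(0,1]}\rho|_{C_{u_A}})\oplus\theta$ to $\sigma\oplus 0$ by $\left(\begin{smallmatrix}V&e\\0&V^*\end{smallmatrix}\right)$ and uses $[c(\tau_A)]=0$. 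Both unitaries work for the same reason (the relations $eV=0$, $V^*e=0$), and your closing caveat is exactly the right one: since $1\otimes e$ is an abelian, non-full projection in $\mathcal{M}(S\mathbb{K})$, one cannot claim that the internal orthogonal sum computes the group sum in $Ext$, so the explicit equivalence (or an appeal to Voiculescu absorption for the ample summand) is genuinely required.
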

\begin{lem}[{c.f. \cite[{Sec. 2.2.}]{M}}]\label{mc1}
The following is an exact sequence:
\[K_1(\tau_A(A)'\cap\mathcal{Q}(\mathbb{K}))\to K_1(\mathcal{Q}(\mathbb{K}))\xrightarrow{\iota_A}Ext_s(A)\xrightarrow{q_A}Ext_w(A)\to 0,\]
where the map $K_1(\tau_A(A)'\cap\mathcal{Q}(\mathbb{K}))\to K_1(\mathcal{Q}(\mathbb{K})$ is induced by the inclusion $\tau_A(A)'\cap\mathcal{Q}(\mathbb{K})\subset\mathcal{Q}(\mathbb{K})$.
\end{lem}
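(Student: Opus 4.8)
The plan is to establish exactness at each of the three nontrivial spots. Exactness at $Ext_w(A)$ (surjectivity of $q_A$) and at $Ext_s(A)$ (the identity $\operatorname{im}\iota_A=\ker q_A$) have already been recorded in the discussion preceding the statement, so the only genuinely new content is exactness at $K_1(\mathcal{Q}(\mathbb{K}))$. Writing $\kappa\colon K_1(\tau_A(A)'\cap\mathcal{Q}(\mathbb{K}))\to K_1(\mathcal{Q}(\mathbb{K}))$ for the map induced by the inclusion, I must prove $\operatorname{im}\kappa=\ker\iota_A$. Throughout I fix $\tau_A$ to be an \emph{absorbing} unital trivial extension, so that by Voiculescu's absorption theorem (see \cite[{Chap. 15.12.}]{B}) any finite amplification $\tau_A^{(n)}:=\tau_A\oplus\cdots\oplus\tau_A$ and any sum $\tau_A\oplus\pi\circ\rho$ with $\rho\colon A\to\mathcal{M}(\mathbb{K})$ unital is strongly unitarily equivalent to $\tau_A$ via a unitary in $\mathcal{M}(\mathbb{K})$ (after the identification $\mathbb{M}_n\otimes\mathbb{K}\cong\mathbb{K}$). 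I will repeatedly use the two facts that $[\tau_A]_s=0$ and that $K_1(\mathcal{M}(\mathbb{K}))=K_1(\mathbb{M}_2(\mathcal{M}(\mathbb{K})))=0$.

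For the inclusion $\operatorname{im}\kappa\subseteq\ker\iota_A$, take a unitary $v\in\mathbb{M}_n(\tau_A(A)'\cap\mathcal{Q}(\mathbb{K}))$, so that its image $\tilde v\in\mathcal{Q}(\mathbb{K})$ under $\mathbb{M}_n\otimes\mathbb{K}\cong\mathbb{K}$ commutes with $\tau_A^{(n)}(A)$. Choosing $V\in\mathcal{M}(\mathbb{K})$ with $\tau_A^{(n)}={\rm Ad}(\pi(V))\circ\tau_A$, the unitary $w:=\pi(V)^*\,\tilde v\,\pi(V)$ commutes with $\tau_A(A)$, whence ${\rm Ad}(w)\circ\tau_A=\tau_A$ and $\iota_A([w]_1)=[\tau_A]_s=0$. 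Since $[w]_1=[\tilde v]_1=\kappa[v]_1$ in $K_1(\mathcal{Q}(\mathbb{K}))$ and $\iota_A$ is well defined on homotopy classes, this gives $\iota_A\circ\kappa=0$.

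The substantial direction is $\ker\iota_A\subseteq\operatorname{im}\kappa$. Suppose $\iota_A([u]_1)=[{\rm Ad}\,u\circ\tau_A]_s=0=[\tau_A]_s$. Unwinding the definition of the equivalence relation in $Ext_s(A)$ produces unital $*$-homomorphisms $\rho_1,\rho_2\colon A\to\mathcal{M}(\mathbb{K})$ and a unitary $U\in\mathbb{M}_2(\mathcal{M}(\mathbb{K}))$ with
\[
({\rm Ad}\,u\circ\tau_A)\oplus\pi\circ\rho_1={\rm Ad}(\pi(U))\circ(\tau_A\oplus\pi\circ\rho_2).
\]
After identifying $\mathbb{M}_2\otimes\mathbb{K}\cong\mathbb{K}$ and using absorption to write $\tau_A\oplus\pi\circ\rho_i={\rm Ad}(\pi(V_i))\circ\tau_A$ with $V_i\in\mathcal{M}(\mathbb{K})$, the left-hand side becomes ${\rm Ad}(\tilde u\,\pi(V_1))\circ\tau_A$ — where $\tilde u$ is the image of ${\rm diag}(u,1)$ and satisfies $[\tilde u]_1=[u]_1$ — while the right-hand side becomes ${\rm Ad}(\pi(U)\,\pi(V_2))\circ\tau_A$. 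Consequently the unitary $W:=(\pi(U)\,\pi(V_2))^*\,\tilde u\,\pi(V_1)$ satisfies ${\rm Ad}(W)\circ\tau_A=\tau_A$, i.e.\ $W\in\tau_A(A)'\cap\mathcal{Q}(\mathbb{K})$. In $K_1(\mathcal{Q}(\mathbb{K}))$ one computes $[W]_1=[\tilde u]_1+[\pi(V_1)]_1-[\pi(V_2)]_1-[\pi(U)]_1$, and here is the crux: the classes $[\pi(V_i)]_1$ and $[\pi(U)]_1$ all vanish, being images of unitaries in $\mathcal{M}(\mathbb{K})$, respectively $\mathbb{M}_2(\mathcal{M}(\mathbb{K}))$, whose $K_1$ is trivial. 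Hence $W$ defines a class in $K_1(\tau_A(A)'\cap\mathcal{Q}(\mathbb{K}))$ with $\kappa[W]_1=[W]_1=[\tilde u]_1=[u]_1$, so $[u]_1\in\operatorname{im}\kappa$.

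I expect the main obstacle to be the middle step of the last paragraph. The $Ext_s$-relation a priori allows one both to adjoin arbitrary unital trivial extensions and to conjugate inside a $2\times 2$ multiplier amplification, and the work lies in collapsing all of this auxiliary data into a single conjugating unitary of $\mathcal{Q}(\mathbb{K})$ without changing its $K_1$-class. The bookkeeping is handled by Voiculescu absorption, but the reason the collapse is harmless at the level of $K_1$ is precisely the vanishing $K_1(\mathcal{M}(\mathbb{K}))=0$; this is the conceptual heart of exactness at $K_1(\mathcal{Q}(\mathbb{K}))$.
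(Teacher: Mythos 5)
Your proposal is correct, and its engine is the same as the paper's: given a unitary $u$ with $\iota_A([u]_1)=0$, correct $u$ by unitaries coming from multiplier algebras so that the corrected unitary lies in $\tau_A(A)'\cap\mathcal{Q}(\mathbb{K})$, and observe that the correction is invisible in $K_1(\mathcal{Q}(\mathbb{K}))$ because $K_1(\mathcal{M}(\mathbb{K}))=K_1(\mathbb{M}_2(\mathcal{M}(\mathbb{K})))=0$. The difference lies in which definition of $Ext_s(A)$ gets unwound. The paper adopts the unstabilized definition (strong equivalence classes of unital essential extensions, following \cite{M0}), under which $[{\rm Ad}\,u\circ\tau_A]_s=0$ directly yields a \emph{single} unitary $U\in\mathcal{M}(\mathbb{K})$ with ${\rm Ad}(\pi(U)u)\circ\tau_A=\tau_A$; the whole proof is then three lines, since $\pi(U)u\in\tau_A(A)'\cap\mathcal{Q}(\mathbb{K})$ and $[\pi(U)u]_1=[u]_1$. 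You instead unwind the Skandalis-type stabilized relation (auxiliary unital trivial summands $\pi\circ\rho_i$ and a conjugating unitary in $\mathbb{M}_2(\mathcal{M}(\mathbb{K}))$) and then collapse the auxiliary data via Voiculescu absorption. This costs you the bookkeeping with $V_1,V_2$, but it buys independence from the identification $Ext_s(A)=Ext_s(A,\mathbb{K})$: your argument works verbatim if one takes the stabilized group as the primary definition, whereas the paper's short proof silently relies on the fact (itself a consequence of Voiculescu's theorem) that the zero class of the unstabilized group is witnessed by a single multiplier unitary. Your explicit handling of matrix amplifications in the easy inclusion $\operatorname{im}\kappa\subseteq\ker\iota_A$ — conjugating a representative of a $K_1$-class of $\mathbb{M}_n(\tau_A(A)'\cap\mathcal{Q}(\mathbb{K}))$ back into the commutant of $\tau_A(A)$ using $\tau_A^{(n)}\sim_s\tau_A$ — is also sound; the paper treats this inclusion as immediate and does not record it.
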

\begin{proof}
It is enough to show ${\rm Ker}\; \iota_A \subset {\rm Im} (K_1(\tau_A(A)'\cap\mathcal{Q}(\mathbb{K}))\to K_1(\mathcal{Q}(\mathbb{K})))$.
For a unitary $w\in \mathcal{Q}(\mathbb{K})$ with $[{\rm Ad}w\circ\tau_A]_s=0$,
there is a unitary $U\in\mathcal{M}(\mathbb{K})$ satisfying ${\rm Ad}\pi(U)w\circ \tau_A=\tau_A$,
and $[w]_1=[\pi(U)w]_1\in {\rm Ker}\; \iota_A$ lies in the image of $K_1(\tau_A(A)'\cap\mathcal{Q}(\mathbb{K}))\to K_1(\mathcal{Q}(\mathbb{K}))$.
\end{proof}
\begin{lem}\label{mc2}
There is an isomorphism $p_A : K_1(\tau_A(A)'\cap\mathcal{Q}(\mathbb{K}))\to Ext(SA, \mathbb{K})$ making the following square commute
\[
\begin{tikzcd}
K_1(\tau_A(A)'\cap\mathcal{Q}(\mathbb{K})) \ar[r] \ar[d,"p_A"] & K_1(\mathcal{Q}(\mathbb{K}))\ar[d,"-I"] \\
Ext(A, S\mathbb{K})\ar[r,"u_A^*"] & Ext(\mathbb{C}, S\mathbb{K})
\end{tikzcd}
\]
\end{lem}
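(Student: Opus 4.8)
The plan is to identify $\tau_A(A)'\cap\mathcal{Q}(\mathbb{K})$ with a Paschke dual algebra and to build $p_A$ from the classical construction that sends a unitary commuting with $\tau_A(A)$ to a $K$-homology class. Writing $\tau_A=\pi\circ\rho$ for the fixed injective unital (hence, by Voiculescu, absorbing) lift $\rho\colon A\to\mathcal{M}(\mathbb{K})$, a class in $K_1(\tau_A(A)'\cap\mathcal{Q}(\mathbb{K}))$ is, after stabilizing $\mathbb{M}_n$ into $\mathbb{K}$, represented by a single unitary $w\in\mathcal{Q}(\mathbb{K})$ with $[w,\tau_A(a)]=0$. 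First I would lift $w$ to $W\in\mathcal{M}(\mathbb{K})$ that is unitary modulo $\mathbb{K}$ and satisfies $[W,\rho(a)]\in\mathbb{K}$, and observe that $W\rho(a)W^*-\rho(a)=[W,\rho(a)]W^*\in\mathbb{K}$. When $W$ is genuinely unitary this means $[\rho,{\rm Ad}\,W\circ\rho]$ is a Cuntz pair, hence a class in $KK(A,\mathbb{C})$; for $W$ of nonzero index one uses instead the even Fredholm module given by $\rho\oplus\rho$ and the operator $\left(\begin{smallmatrix}0 & W^*\\ W & 0\end{smallmatrix}\right)$. Composing with $\eta_{A,\mathbb{C}}\colon KK(A,\mathbb{C})\xrightarrow{\cong}Ext(A,S\mathbb{K})$ of \eqref{eqn:Blackadar-19.2.6} defines $p_A$ with values in $Ext(A,S\mathbb{K})$, which I identify with the $Ext(SA,\mathbb{K})$ of the statement via the standard suspension identification (cf.\ Rem.~\ref{es}). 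I would then check, as in \cite[Sec.~2.2]{M}, that this is a well defined homomorphism, independent of the lift $W$ and, up to the absorption theorem, of $\rho$.

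To verify the square I would restrict along $u_A\colon\mathbb{C}\to A$. The map induced by the inclusion sends $[w]$ to its class in $K_1(\mathcal{Q}(\mathbb{K}))$, which by the identification fixed in Section~2 is ${\rm Index}(W)$, and $-I$ carries this to the corresponding class in $Ext(\mathbb{C},S\mathbb{K})$. On the other side, $u_A^*p_A([w])$ is obtained by pulling the Cuntz pair (resp.\ the Fredholm module) back along $u_A$, i.e.\ by letting $\mathbb{C}$ act as scalars; the only surviving datum is the Fredholm operator $W$, whose class in $KK(\mathbb{C},\mathbb{C})\cong\mathbb{Z}$ is again ${\rm Index}(W)$. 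Thus both composites record ${\rm Index}(W)$, and the remaining work is purely bookkeeping of the signs built into $-I$, into $\eta_{A,\mathbb{C}}$, and into the identification $[\pi(V)]_1\mapsto{\rm Index}(V)$; tracking the conventions of Section~2 and of the map $-I$ preceding Thm.~\ref{mcs} makes the square commute.

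Finally I would prove that $p_A$ is an isomorphism, and I expect this to be the main obstacle. It does not follow from exact-sequence bookkeeping alone: comparing the sequence of Lem.~\ref{mc1} with the six-term Puppe sequence of the cone extension $0\to SA\xrightarrow{i(u_A)}C_{u_A}\xrightarrow{e(u_A)}\mathbb{C}\to 0$ under $KK(-,\mathbb{C})\cong Ext(-,S\mathbb{K})$ — in which $\iota_A$ corresponds to $e(u_A)^*$ and $q_A$ to $i(u_A)^*$ by Thm.~\ref{mcs} — only shows that the inclusion-induced map and $u_A^*$ share the image $\ker\iota_A$, and this cannot force $p_A$ to be injective, because the inclusion-induced map $K_1(\tau_A(A)'\cap\mathcal{Q}(\mathbb{K}))\to K_1(\mathcal{Q}(\mathbb{K}))=\mathbb{Z}$ is typically far from injective (already for $A=\mathbb{C}\oplus\mathbb{C}$, where the left-hand group is $\mathbb{Z}^{2}$). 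I would therefore establish the isomorphism through Paschke duality in the concrete form used in \cite[Sec.~2.2]{M}: the inverse of $p_A$ takes a Cuntz pair representing a class of $KK(A,\mathbb{C})\cong Ext(A,S\mathbb{K})$, absorbs it into $\rho$ by Voiculescu's theorem to produce a unitary commuting with $\tau_A(A)$ modulo $\mathbb{K}$, and returns its $K_1$-class; checking that this is a two-sided inverse of $p_A$ completes the proof.
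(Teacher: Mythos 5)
Your proposal is correct, and it rests on the same pillar as the paper's own proof: Paschke duality for the absorbing representation $\rho$, transported across a degree-shift identification whose sign is then normalized. The concrete realizations differ in an instructive way. The paper uses the odd picture of Paschke duality, defining $P_{\tau_A}\colon K_1(\tau_A(A)'\cap\mathcal{Q}(\mathbb{K}))\to Ext(SA,\mathbb{K})$ by the explicit formula $w\mapsto[\,f\otimes a\mapsto f(w)\tau_A(a)\,]$ and citing its bijectivity; the compatibility square relating $P_{\tau_A}$ and $P_{\tau_A\circ u_A}$ then commutes by inspection of this formula, and the lemma follows by composing with a natural suspension isomorphism $\theta_{A,\mathbb{C}}\colon Ext(SA,\mathbb{K})\to Ext(A,S\mathbb{K})$ normalized so that $\theta_{\mathbb{C},\mathbb{C}}\circ P_{\tau_A\circ u_A}=-I$. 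You instead use the even picture (a unitary in the commutant yields a Cuntz pair, or an even Fredholm module when the lift $W$ has nonzero index), land in $KK(A,\mathbb{C})$, and let $\eta_{A,\mathbb{C}}$ play exactly the role of the paper's $\theta_{A,\mathbb{C}}$; your check that both composites around the square record $\pm{\rm Index}(W)$, with the residual sign absorbed into the normalization of $p_A$, is the same mechanism as the paper's ``multiplying by the appropriate sign''. Two genuine differences: (i) you prove the Paschke isomorphism by exhibiting an inverse via Voiculescu absorption, where the paper merely cites it --- this makes your route more self-contained, but it obliges you to carry out the well-definedness checks you only gesture at (reduction of matrix unitaries to a single $w$ via $\tau_A\otimes 1_n\sim\tau_A$, independence of the lift $W$ and of the homotopy representative); (ii) your observation that bijectivity of $p_A$ cannot be extracted from comparing the exact sequence of Lem.~\ref{mc1} with a Puppe sequence --- supported by the $A=\mathbb{C}\oplus\mathbb{C}$ example, where the inclusion-induced map $\mathbb{Z}^{2}\to\mathbb{Z}$ is far from injective --- is correct and explains why Paschke duality is unavoidable here, a point the paper leaves implicit.
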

\begin{proof}
By Paschke duality,
one has an isomorphism
\[P_{\tau_A} : K_1(\tau_A(A)'\cap\mathcal{Q}(\mathbb{K}))\to Ext(SA, \mathbb{K})\]
 which sends $w\in U(\tau_A(A)'\cap\mathcal{Q}(\mathbb{K}))$ to an extension defined by
\[SA\ni f\otimes a\mapsto f(w)\tau_A(a)\in\mathcal{Q}(\mathbb{K}). \]
Thus, one has the commutative square 
\[
\begin{tikzcd}
K_1(\tau_A(A)'\cap\mathcal{Q}(\mathbb{K})) \ar[d,"P_{\tau_A}"] \ar[r] & K_1(\mathcal{Q}(\mathbb{K}))\ar[d,"P_{(\tau_A\circ u_A)}"] \\
Ext (SA, \mathbb{K})\ar[r,"(Su_A)^*"] & Ext (S, \mathbb{K}).
\end{tikzcd}
\]
The generator $-1\in \mathbb{Z}=K_1(\mathcal{Q}(\mathbb{K}))$ is given by $[\pi(V)]_1$,
where $V\in\mathcal{M}(\mathbb{K})$ is an isometry with a rank 1 projection $e:=1-VV^*$.
Multiplying by the appropriate sign $\pm 1$,
one can find a natural isomorphism $\theta_{A, B} : Ext (SA, B\otimes\mathbb{K})\to Ext (A, SB\otimes\mathbb{K})$ such that
\[\theta_{\mathbb{C}, \mathbb{C}}(P_{(\tau_A\circ u_A)}([\pi(V)]))=[\tau_{[e, 0]}]\]
(i.e., $\theta_{\mathbb{C},\mathbb{C}}\circ P_{(\tau_A\circ u_A)}=-I$).
For the isomorphism $p_A:=\theta_{A, \mathbb{C}}\circ P_{\tau_A}$,
the naturality of $\theta_{-, -}$ implies $u_A^*\circ p_A=\theta_{\mathbb{C}, \mathbb{C}}\circ(Su_A)^*\circ P_{\tau_A}$,
and this proves the statement.
\end{proof}
\begin{proof}[{Proof of Thm. \ref{mcs}}]
Let $f_A : Ext_w(A)\to Ext(SA, S\mathbb{K})$ be the composition of the isomorphism $Ext_w(A)\to Ext(A, \mathbb{K})$ and the suspension isomorphism $I_S\otimes -: Ext(A, \mathbb{K})\to Ext(SA, S\mathbb{K})$.
Since $c(\tau)\circ i({u_A})=S\tau : SA\to S\mathcal{Q}(\mathbb{K})\subset \mathcal{Q}(S\mathbb{K})$,
one has $f_A\circ q_A=i({u_A})^*\circ m_A$.
By Lem.~\ref{mc1} and Lem.~\ref{mc2},
one has the following diagram with exact horizontal sequences
\[
\begin{tikzcd}
K_1(\tau_A(A)'\cap\mathcal{Q}(\mathbb{K})) \ar[r] \ar[d,"p_A"] & K_1(\mathcal{Q}(\mathbb{K})) \ar[d,"-I"] \ar[r,"\iota_A"] & Ext_s(A)\ar[d,"m_A"] \ar[r,"q_A"] & Ext_w(A) \ar[r] \ar[d,"f_A"] & 0 \ar[d,equal] \\
Ext(A, S\mathbb{K}) \ar[r,"u_A^*"] & Ext(\mathbb{C}, S\mathbb{K}) \ar[r,"e({u_A})^*"] & Ext (C_{u_A}, S\mathbb{K}) \ar[r,"i({u_A})^*"] & Ext (SA, S\mathbb{K}) \ar[r] & 0.
\end{tikzcd}
\]
We will show $m_A\circ \iota_A=e({u_A})^*\circ -I$. The statement then follows from the Five-Lemma.
%and then the statement follows from 5-lemma.

Let $V\in\mathcal{M}(\mathbb{K})$ be the isometry with a rank 1 projection $e:=1-VV^*$.
It is enough to show $m_A\circ\iota_A([\pi(V)]_1)=e({u_A})^*\circ -I([\pi(V)]_1)$.
For a state $\psi$ of $A$ and $a(t)\in C_{u_A}$,
one has $ta(1)-\psi (a(t))\in C_0(0, 1)$.
Thus,
one has the following contractible completely positive lift of $\tau_{[e, 0]}\circ e({u_A}) : C_{u_A}\to \mathcal{Q}(S\mathbb{K})$:
\[\theta : C_{u_A}\ni a(t)\mapsto \psi (a(t))(1-VV^*)\in C[0, 1]\otimes\mathcal{M}(\mathbb{K})\subset\mathcal{M}(S\mathbb{K}).\]
A unital completely positive lift $L_{{\rm Ad}\pi(V)\circ \tau_A} : A\to \mathcal{M}(\mathbb{K})$ of ${\rm Ad}\pi(V)\circ\tau_A$ is given by
\[{\rm Ad}V\circ\rho+(1-VV^*)\psi,\]
and we have the following lift of $c({\rm Ad}\pi(V)\circ\tau_A) : C_{u_A}\to\mathcal{Q}(\mathbb{K})$:
\[\sigma : C_{u_A}\ni a(t)\mapsto V\rho (a(t))V^*+\psi(a(t))(1-VV^*)\in C[0, 1]\otimes\mathcal{M}(\mathbb{K})\subset\mathcal{M}(S\mathbb{K}).\]
Now one has a unitary
\[
U:=1_{C[0, 1]}\otimes
\left(\begin{array}{cc}
V&e\\
0&V^*
\end{array}\right)\in C[0, 1]\otimes\mathbb{M}_2\otimes\mathcal{M}(\mathbb{K})\subset\mathbb{M}_2\otimes\mathcal{M}(S\mathbb{K})
\]
satisfying ${\rm Ad} U\circ(({\rm id}_{C_0(0, 1]}\rho|_{C_{u_A}})\oplus\theta)=(\sigma\oplus 0)$.
%and this implies
Since $[c(\tau_A)]=0\in Ext(C_{u_A}, S\mathbb{K})$, this implies
\[e({u_A})^*\circ -I([\pi(V)]_1)=[\tau_{[e, 0]}\circ e({u_A})]=[c(\tau_A)\oplus\tau_{[e, 0]}\circ e({u_A})]=[c({\rm Ad}\pi(V)\circ\tau_A)\oplus 0]=m_A\circ\iota_A([\pi(V)]_1).\]
\end{proof}
Another way to understand
%We have another picture to understand 
$Ext_s(A)$ is via the dual algebra $\mathfrak{D}(A)$ due to \cite{HR}. 
For the injective unital $*$-homomorphism $\rho : A\to\mathcal{M}(\mathbb{K})$ with $\rho(A)\cap\mathbb{K}=\{0\}$,
one has the dual algebra
\[\mathfrak{D}(A):=\{T\in\mathcal{M}(\mathbb{K})\;|\; [T, \rho(a)]\in\mathbb{K}\}.\]
For a projection $p\in\mathfrak{D}(A)\backslash \mathbb{K}$ (resp. $q\in \tau_A(A)'\cap\mathcal{Q}(\mathbb{K})$), there is an isometry $W\in \mathcal{M}(\mathbb{K})$  with $p=WW^*$(resp. $w\in\mathcal{Q}(\mathbb{K})$ with $q=ww^*$) which defines a unital essential extension ${\rm Ad}\pi(W^*)\circ\tau_A$ (resp. ${\rm Ad}w^*\circ\tau_A$). 
Since strong (weak) equivalence classes of the above extension do not depend on the choice of the isometry,
this gives the following isomorphisms (see \cite[{Chap. 5}]{HR})
\[K_0(\mathfrak{D}(A))\cong Ext_s(A),\quad K_0(\tau_A(A)'\cap\mathcal{Q}(\mathbb{K})))\cong Ext_w(A).\]
For $-[e]_0=[1_{\mathfrak{D}(A)}-e]_0\in K_0(\mathfrak{D}(A))$ and the isometry $V$ with $1-VV^*=e$,
$-[e]_0\in K_0(\mathfrak{D}(A))$ corresponds to $[{\rm Ad}\pi(V^*)\circ\tau_A]_s=\iota_A([\pi(V^*)]_1)\in Ext_s(A)$.
Thus,
one has the following commutative diagram
\[
\begin{tikzcd}
K_0(\mathbb{K}) \ar[r] & K_0(\mathfrak{D}(A)) \ar[d,"\cong"] \\
K_1(\mathcal{Q}(\mathbb{K})) \ar[u,"-{\rm Ind}"] \ar[r,"\iota_A"] & Ext_s(A).
\end{tikzcd}
\]
%For the unitization $A+\mathbb{C}1$,one can identify $\mathfrak{D}(A+\mathbb{C}1)$ with the algebra\[\left(\begin{array}{cc}\mathfrak{D}(A)&\mathbb{K}\\\mathbb{K}&\mathcal{M}(\mathbb{K})\end{array}\right)\subset\mathbb{M}_2\otimes\mathcal{M}(\mathbb{K}),\]and the exact sequence\[0\to\left(\begin{array}{cc}\mathbb{K}&\mathbb{K}\\\mathbb{K}&\mathcal{M}(\mathbb{K})\end{array}\right)\to\mathfrak{D}(A+\mathbb{C}1)\to \tau_A(A)'\cap\mathcal{Q}(\mathbb{K})\to 0\]gives the following diagram with horizontal exact sequences:
For the exact sequence
\[0\to\mathbb{K}\to\mathfrak{D}(A)\to \tau_A(A)'\cap\mathcal{Q}(\mathbb{K})\to 0,\]
the associated six-term exact sequence fits into
%is understood as 
the following commutative diagram:
\[
\begin{tikzcd}[font=\footnotesize,every label/.append style = {font = \tiny},column sep=0.7cm]
K_1(\mathfrak{D}(A)) \ar[ddd] \ar[r] & K_1(\tau_A(A)'\cap\mathcal{Q}(\mathbb{K})) \ar[d,equal] \ar[r] & K_0(\mathbb{K})\ar[rr] && K_0(\mathfrak{D}(A))\ar[d,"\cong"] \ar[rr] && K_0(\tau_A(A)'\cap\mathcal{Q}(\mathbb{K})) \ar[d,"\cong"] \\
& K_1(\tau_A(A)'\cap\mathcal{Q}(\mathbb{K})) \ar[d,"p_A"] \ar[r] & K_1(\mathcal{Q}(\mathbb{K})) \ar[u,"-\rm Ind"] \ar[d,"-I"]\ar[rr,"\iota_A"] && Ext_s(A)\ar[d,"m_A"] \ar[rr,"q_A"] && Ext_w(A) \ar[d,"f_A"] \\
& Ext (A, S\mathbb{K}) \ar[r,"u_A^*"] \ar[d,"\eta_{A, \mathbb{C}}^{-1}"] & Ext (\mathbb{C}, S\mathbb{K})\ar[d,"\eta_{\mathbb{C}, \mathbb{C}}^{-1}"] \ar[rr,"e({u_A})^*"] && Ext(C_{u_A}, S\mathbb{K}) \ar[d,"\eta_{C_{u_A}, \mathbb{C}}^{-1}"] \ar[rr,"i({u_A})^*"] && Ext(SA, S\mathbb{K}) \ar[d,"\eta_{SA, \mathbb{C}}^{-1}"] \\
KK(SC_{u_A}, \mathbb{C}) \ar[r,"d({u_A})\hat{\otimes}"] & KK(A, \mathbb{C}) \ar[r,"KK(u_A)\hat{\otimes}"] & KK(\mathbb{C}, \mathbb{C})\ar[rr,"KK(e({u_A}))\hat{\otimes}"] && KK(C_{u_A}, \mathbb{C})\ar[rr,"KK(i({u_A}))\hat{\otimes}"] && KK(SA, \mathbb{C}).
\end{tikzcd}
\]
\begin{rem}\label{dD}
If $K_0(A), K_1(A)$ are finitely generated (i.e., $A$ has a Spanier--Whitehead K-dual $D(A)$),
$C_{u_A}$ is dualizable,
and one can observe the isomorphisms
\[K_*(\mathfrak{D}(A))\cong K_*(D(C_{u_A})),\quad K_*(\tau_A(A)'\cap\mathcal{Q}(\mathbb{K}))\cong K_{*-1}(D(A)).\]
\end{rem}
Summarising the results of this section,
we have the following corollary.
\begin{cor}\label{dDd}
Let $A$ be a separable nuclear unital C*-algebras, and let $W\in \mathcal{Q}(\mathbb{K})$ be a unitary with ${\rm Ind}(W)=1$.
There is an isomorphism $\Psi_A : Ext_s(A)\to KK(C_{u_A}, \mathbb{C})$ satisfying
\[I_S\otimes \Psi_A([E]_s)=KK(i(\pi_{C_{u_E}}))\hat{\otimes}KK(j(C_{u_E}))^{-1}\hat{\otimes}KK(e)^{-1}\in KK(SC_{u_A}, S),\]
\[\Psi_A(\iota_A([W]_1))=KK(e(u_A))\in KK(C_{u_A}, \mathbb{C})\]
for any unital essential extension $[\tau]_s=[E]_s$. 
\end{cor}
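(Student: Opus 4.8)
The plan is to build $\Psi_A$ out of the two isomorphisms already at hand and then to read off both displayed identities by tracking how the constituent maps behave, the only genuine issue being the simultaneous normalisation of signs. Concretely I would set
\[
\Psi_A := -\,\eta_{C_{u_A}, \mathbb{C}}^{-1}\circ m_A \colon Ext_s(A)\to KK(C_{u_A}, \mathbb{C}),
\]
where $m_A \colon Ext_s(A)\to Ext(C_{u_A}, S\mathbb{K})$ is the isomorphism of Thm.~\ref{mcs} (sending $[E]_s$ to $[c(\tau)]=[S\mathbb{K}\to C_{u_E}\to C_{u_A}]$) and $\eta_{C_{u_A}, \mathbb{C}}$ is the isomorphism \eqref{eqn:Blackadar-19.2.6}. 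Both factors are isomorphisms and $-{\rm id}$ is an automorphism, so $\Psi_A$ is an isomorphism; the global sign is precisely the freedom one has to fix in order to meet both normalisations below.

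For the first identity I would apply Lem.~\ref{li} to the essential extension $c(\tau)$, i.e.\ with $A$ replaced by $C_{u_A}$, with $B=\mathbb{C}$, with $E$ replaced by $C_{u_E}=\pi^{-1}(c(\tau)(C_{u_A}))$, with $\tau$ replaced by $c(\tau)$ and $\pi_E$ by $\pi_{C_{u_E}}=c(\tau)^{-1}\circ\pi$. Writing $\psi:=\eta_{C_{u_A},\mathbb{C}}^{-1}([c(\tau)])=-\Psi_A([E]_s)$, Lem.~\ref{li} reads
\[
-I_S\otimes(\psi\hat{\otimes}KK(e))=KK(i(\pi_{C_{u_E}}))\hat{\otimes}KK(j(C_{u_E}))^{-1}.
\]
Composing both sides on the right with $I_S\otimes KK(e)^{-1}\in KK(S\mathbb{K}, S)$ cancels the $KK(e)$-factor on the left, because $KK(e)\hat{\otimes}KK(e)^{-1}=I_\mathbb{C}$ and $I_S\otimes-$ is multiplicative for $\hat{\otimes}$; this turns the left-hand side into $-I_S\otimes\psi=I_S\otimes\Psi_A([E]_s)$ and the right-hand side into exactly the claimed expression (with $\hat{\otimes}KK(e)^{-1}$ understood, as in the proof of Lem.~\ref{li}, as $\hat{\otimes}(I_S\otimes KK(e)^{-1})$). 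This is where the $-1$ in the definition of $\Psi_A$ is consumed.

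For the second identity I would use the commuting square of Thm.~\ref{mcs}, giving $m_A(\iota_A([W]_1))=e(u_A)^*\big({-I}([W]_1)\big)$. Since $[\pi(V)]_1$ corresponds to $-1\in\mathbb{Z}\cong K_1(\mathcal{Q}(\mathbb{K}))$ while ${\rm Ind}(W)=1$, we have $[W]_1=-[\pi(V)]_1$ and hence $-I([W]_1)=-[\tau_{[e,0]}]=-\eta_{\mathbb{C},\mathbb{C}}(I_\mathbb{C})$. Naturality of $\eta$ along $e(u_A)\colon C_{u_A}\to\mathbb{C}$ gives $\eta_{C_{u_A},\mathbb{C}}^{-1}\big(e(u_A)^*([\tau_{[e,0]}])\big)=KK(e(u_A))\hat{\otimes}\eta_{\mathbb{C},\mathbb{C}}^{-1}([\tau_{[e,0]}])=KK(e(u_A))$, so $\eta_{C_{u_A},\mathbb{C}}^{-1}(m_A(\iota_A([W]_1)))=-KK(e(u_A))$, whence $\Psi_A(\iota_A([W]_1))=KK(e(u_A))$, again using the global sign.

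The naturality of $\eta$ and the bookkeeping of tensor factors are routine; the main obstacle is forcing the two signs to be consistent at once. A priori nothing guarantees that the normalisation making the generator of $K_1(\mathcal{Q}(\mathbb{K}))$ land on $KK(e(u_A))$ is the same as the one for which Lem.~\ref{li} yields the clean formula for $[E]_s$. The content of the corollary is that a single sign, absorbed into the definition of $\Psi_A$ and combined with the earlier conventions for $-I$ and for the identification of $Ext(\mathbb{C}, S\mathbb{K})$ with $K_1(\mathcal{Q}(\mathbb{K}))$, achieves both simultaneously; verifying this amounts to keeping careful track of the signs in Rem.~\ref{es}, Lem.~\ref{li} and Thm.~\ref{mcs}.
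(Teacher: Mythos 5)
Your proposal is correct and coincides with the paper's own proof: the paper defines $\Psi_A := -\eta_{C_{u_A},\mathbb{C}}^{-1}\circ m_A$, obtains the first identity by applying Lem.~\ref{li} to the extension $[c(\tau)]=[S\mathbb{K}\to C_{u_E}\to C_{u_A}]$, and obtains the second from the commuting square of Thm.~\ref{mcs} together with the computation $\eta_{C_{u_A},\mathbb{C}}^{-1}([\tau_{[e\circ e(u_A),0]}])=[e(u_A)\otimes e,0]=KK(e(u_A))$, which is exactly the instance of naturality of $\eta$ you invoke. Your sign bookkeeping (consuming the global $-1$ once via $[W]_1=-[\pi(V)]_1$ and once via Lem.~\ref{li}) matches the paper's, so there is no gap.
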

\begin{proof}
The isomorphism is defined by
\[\Psi_A :=-\eta_{C_{u_A}, \mathbb{C}}^{-1}\circ m_A.\]
Then, Lem. \ref{li} implies
\begin{align*}
I_S\otimes \Psi_A([E]_s)&=I_S\otimes (-\eta_{C_{u_A}, \mathbb{C}}^{-1}([c(\tau)]_s))\\
&=KK(i(\pi_{C_{u_E}}))\hat{\otimes}KK(j(C_{u_E}))^{-1}\hat{\otimes}KK(e)^{-1},
\end{align*}
and Thm. \ref{mcs} implies
\begin{align*}
-\Psi_A(\iota_A([W]_1))&=-\eta_{C_{u_A}, \mathbb{C}}^{-1}(m_A(\iota_A(-[W]_1)))\\
&=-\eta_{C_{u_A}, \mathbb{C}}^{-1}(e(u_A)^*([\tau_{[e, 0]}]))\\
&=-\eta_{C_{u_A}, \mathbb{C}}^{-1}([\tau_{[e\circ e(u_A), 0]}])\\
&=-[e\circ e(u_A), 0]\\
&=-[e(u_A)\otimes e, 0]\\
&=-KK(e(u_A)). \qedhere
\end{align*}
\end{proof}
%%%%%%%%%%%%%%%%%%%%%%%%%%%%%%%%%%%%%%%%%%%%%%%%%%%%%%%%%%%%%%%%%%%%%%%%%%%%%%%%%%%%%%%%%%%%%%%%%%%%%%%%%%%%%%%%%%%%%%%%%%%%%%%%%%%%%%%%%%%%%%%%%%%%%%%%%%%%%%%%%%%%%%%%%%%%%%%%%%%%%%%%
%%%%%%%%%%%%%%%%%%%%%%%%%%%%%%%%%%%%%%%%%%%%%%%%%%%%%%%%%%%%%%%%%%%%%%%%%%%%%%%%%%%%%%%%%%%%%%%%%%%%%%%%%%%%%%%%%%%%%%%%%%%%%%%%%%%%%%%%%%%%%%%%%%%%%%%%%%%%%%%%%%%%%%%%%%%%%%%%%%%%%%%%
\section{Strong K-theoretic duality for unital extensions}\label{skD}
We first recall the definition of this duality from \cite{M}:
%As an application of Cor. \ref{Sed},
%we will show in Thm. \ref{mtcp} that every unital essential extension of a dualizable C*-algebra admits its stongly K-theoretic dual extension.
\begin{dfn}[{\cite[{Def. 6.1.}]{M}}]\label{skd}
Let $A, B$ be separable nuclear unital C*-algebras,
and let $\tau : A\to \mathcal{Q}(\mathbb{K})$ and $\sigma : B\to\mathcal{Q}(\mathbb{K})$ be unital essential extensions with $E:=\pi^{-1}(\tau(A)), F:=\pi^{-1}(\sigma(B))$.
Two extension $\tau, \sigma$ are K-theoretic dual if there are vertical arrows given by isomorphisms that make the following diagram commute:
%there are vertical arrows of isomorphisms in the following commutative diagrams:
\[
\begin{tikzcd}
K_1(\mathfrak{D}(A)) \ar[r] \ar[d] & K_1(\tau_A(A)'\cap\mathcal{Q}(\mathbb{K})) \ar[d] \ar[r] & K_1(\mathcal{Q}(\mathbb{K})) \ar[d,"\rm Ind"] \ar[r,"\iota_A"] & Ext_s(A) \ar[d,"\Phi_A"] \ar[r,"q_A"] & Ext_w(A) \ar[d] \\
K_1(F) \ar[r] & K_1(B) \ar[r,"\rm Ind"] & K_0(\mathbb{K}) \ar[r] & K_0(F) \ar[r,"K_0(\pi_F)"] & K_0(B),
\end{tikzcd}
\]
\[
\begin{tikzcd}
K_1(\mathfrak{D}(B)) \ar[r] \ar[d] & K_1(\tau_B(B)'\cap\mathcal{Q}(\mathbb{K})) \ar[d] \ar[r] & K_1(\mathcal{Q}(\mathbb{K})) \ar[d,"\rm Ind"] \ar[r,"\iota_B"] & Ext_s(B) \ar[d,"\Phi_B"] \ar[r,"q_B"] & Ext_w(B) \ar[d] \\
K_1(E) \ar[r] & K_1(A) \ar[r,"\rm Ind"] & K_0(\mathbb{K}) \ar[r] & K_0(E) \ar[r,"K_0(\pi_E)"] & K_0(A).
\end{tikzcd}
\]
We call $\tau$ and $\sigma$ are strongly K-theoretic dual with respect to $\epsilon\in \{\pm 1\}$ if
\[\Phi_A([E]_s)=\epsilon [1_F]_0,\quad \Phi_B([F]_s)=\epsilon [1_E]_0\]
holds.
\end{dfn}
\begin{rem}
It is easy to see that $E$ and $F$ are strongly K-theoretic dual with respect to $\epsilon$ if and only if we have the following isomorphisms
\[\Phi_B : (K_0(E), \epsilon [1_E]_0, [e]_0, K_1(E))\cong (Ext_s(B), [F]_s, \iota_B([W]_1), K_1(\mathfrak{D}(B))),\]
\[\Phi_A : (K_0(F), \epsilon [1_F]_0, [e]_0, K_1(F))\cong (Ext_s(A), [E]_s, \iota_A([W]_1), K_1(\mathfrak{D}(A))),\] 
where $W\in \mathcal{Q}(\mathbb{K})$ is a unitary of ${\rm Ind}(W)=1$.
\end{rem}
\begin{rem}\label{r1}
Thanks to \cite[{Thm. A}]{GR},
if both of $A$ and $B$ are Kirchberg algebras,
then the isomorphism class of $F$ is uniquely determined by $E$ and vice versa.
\end{rem}

\begin{rem}\label{md}
In \cite{M},
K. Matsumoto computed the strong extension groups of the Cuntz--Krieger algebras explicitly and discovered the isomorphism
\[(K_0(\mathcal{T}_{A^t}), -[1_{\mathcal{T}_{A^t}}]_0, [e]_0, K_1(\mathcal{T}_{A^t}))\cong (Ext_s(\mathcal{O}_A), [\mathcal{T}_A]_s, \iota_A([W]_1), K_1(\mathfrak{D}(\mathcal{O}_A)),\] 
where $\mathcal{T}_A$ is the Toeplitz extension of $\mathcal{O}_A$ (see \cite{EFW, DE, M0}).
In particular, the following Toeplitz extensions are proved to be strongly K-theoretic dual with respect to $\epsilon=-1$:
\[\mathbb{K}\to\mathcal{T}_A\to\mathcal{O}_A,\quad \mathbb{K}\to\mathcal{T}_{A^t}\to\mathcal{O}_{A^t}.\]
\end{rem}
%\begin{rem}
%Assume that $A$ and $B$ have finitely generated K-groups and satisfy the UCT.
%By Rem. \ref{dD} and Cor. \ref{dDd},
%it is easy to see that $\tau$ and $\sigma$ are strongly K-theoretic dual if 
%\[D(C_{u_A})\sim_{KK} F,\quad D(C_{u_B})\sim_{KK} E\]
%holds and there are isomorphisms
%\[\tilde{\Phi}_A : KK(C_{u_A}, \mathbb{K})\to K_0(F),\quad \tilde{\Phi}_B : KK(C_{u_B}, \mathbb{K})\to K_0(E)\]
%satisfying
%\[\tilde{\Phi}_A([e\circ e_{u_A}, 0])=[e]_0,\; \tilde{\Phi}_A(\Psi_A([E]_s))=\epsilon [1_F]_0,\quad \]
%\[\tilde{\Phi}_B([e\circ e_{u_B}, 0])=[e]_0,\; \tilde{\Phi}_B(\Psi_A([F]_s))=\epsilon [1_E]_0.\quad \]
%\end{rem}

Let $\xi_E$ (resp. $\xi_F$) be the inclusion $\mathbb{K}+\mathbb{C}1_E\to E$ (resp. $\mathbb{K}+\mathbb{C}1_F\to F$) and denote by $q_E$ (resp.~$q_F$) the quotient map $C_{\xi_E}\to C_{\xi_E}/C_0(0, 1]\otimes\mathbb{K}=C_{u_A}$ (resp. $C_{\xi_F}\to C_{u_B}$).
Let $i_\mathbb{K}$ (resp. $i_\mathbb{C}$) be the inclusion $\mathbb{K}\hookrightarrow\mathbb{K}+\mathbb{C}$ (resp. $\mathbb{C}\hookrightarrow \mathbb{K}+\mathbb{C}$).
For the elements $\Psi_B([F]_s), KK(e(u_B))$ (see Cor.~\ref{dDd}),
we write
%\[\overline{\Psi_B([F]_s)}:=C_{u_B}\xrightarrow{\Psi_B([F]_s)}\mathbb{K}\hookrightarrow \mathbb{K}+\mathbb{C}\in KK(C_{u_B}, \mathbb{K}+\mathbb{C}),\]
%\[\overline{[e\circ e(u_B), 0]}:=C_{u_B}\xrightarrow{[e\circ e(u_B), 0]}\mathbb{K}\xrightarrow{[e, 0]^{-1}}\mathbb{C}\hookrightarrow\mathbb{K}+\mathbb{C}\in KK(C_{u_B}, \mathbb{K}+\mathbb{C}).\]
\[\overline{\Psi_B([F]_s)}:=\Psi_B([F]_s)\hat{\otimes}KK(e)\hat{\otimes}KK(i_\mathbb{K})\in KK(C_{u_B}, \mathbb{K}+\mathbb{C}),\]
\[\overline{KK(e(u_B))}:=KK(e(u_B))\hat{\otimes}KK(i_\mathbb{C})\in KK(C_{u_B}, \mathbb{K}+\mathbb{C}).\]

Using Cor. \ref{Sed} and the following two propositions which will be proved in appendix,
we give a categorical picture to understand strong K-theoretic duality and prove the existence of dual extensions (Thm. \ref{mtcp}).

\begin{prop}\label{keyp2}
The quotient map $q_F$ gives a KK-equivalence $KK(q_F)\in KK(C_{\xi_F}, C_{u_B})^{-1}$ and one has
\[KK(q_F)\hat{\otimes}(\overline{\Psi_B([F]_s)}+\overline{KK(e(u_B))})=KK(e(\xi_F))\in KK(C_{\xi_F}, \mathbb{K}+\mathbb{C}).\]
\end{prop}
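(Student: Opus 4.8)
The plan is to prove the two assertions in turn. For the first, I would identify the kernel of $q_F$: writing $C_{\xi_F}=\{a\in C_0(0,1]\otimes F\mid a(1)\in\mathbb{K}+\mathbb{C}1_F\}$ and $q_F(a)=(\mathrm{id}\otimes\pi_F)\circ a$, one sees $\ker q_F=C_0(0,1]\otimes\mathbb{K}$, which is contractible. Since $C_{u_B}$ is nuclear the extension $0\to C_0(0,1]\otimes\mathbb{K}\to C_{\xi_F}\xrightarrow{q_F}C_{u_B}\to 0$ is semi-split, so $q_F$ is a KK-equivalence. For the identity I would compute the ``$\mathbb{K}$-component'' and the ``$\mathbb{C}$-component'' of $KK(e(\xi_F))$ separately.

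The decomposition rests on the split extension $0\to\mathbb{K}\xrightarrow{i_\mathbb{K}}\mathbb{K}+\mathbb{C}\xrightarrow{p}\mathbb{C}\to0$ with splitting $i_\mathbb{C}$: this shows $(KK(i_\mathbb{K}),KK(i_\mathbb{C}))$ is a KK-equivalence $\mathbb{K}\oplus\mathbb{C}\sim_{KK}\mathbb{K}+\mathbb{C}$, whose inverse has the form $(\rho_\mathbb{K},KK(p))$ with $\rho_\mathbb{K}\in KK(\mathbb{K}+\mathbb{C},\mathbb{K})$ uniquely determined by $KK(i_\mathbb{K})\hat{\otimes}\rho_\mathbb{K}=I_\mathbb{K}$ and $KK(i_\mathbb{C})\hat{\otimes}\rho_\mathbb{K}=0$. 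Hence any $y\in KK(C_{\xi_F},\mathbb{K}+\mathbb{C})$ is recovered from $y\hat{\otimes}\rho_\mathbb{K}$ and $y\hat{\otimes}KK(p)$, so it suffices to check the proposition after $\hat{\otimes}KK(p)$ and after $\hat{\otimes}\rho_\mathbb{K}$. The $\mathbb{C}$-component is immediate: naturality of the mapping cone applied to the commuting square $\pi_F\circ\xi_F=u_B\circ p$ gives $p\circ e(\xi_F)=e(u_B)\circ q_F$, and since $p\circ i_\mathbb{C}=\mathrm{id}$, $p\circ i_\mathbb{K}=0$, both sides of the asserted equality compose with $KK(p)$ to $KK(q_F)\hat{\otimes}KK(e(u_B))$ (the term $\overline{\Psi_B([F]_s)}$ being killed).

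The $\mathbb{K}$-component is the crux. Using the relations defining $\rho_\mathbb{K}$, the identity reduces to
\[
KK(e(\xi_F))\hat{\otimes}\rho_\mathbb{K}=KK(q_F)\hat{\otimes}\Psi_B([F]_s)\hat{\otimes}KK(e)\in KK(C_{\xi_F},\mathbb{K}).
\]
By Cor.~\ref{dDd} the element $\Psi_B([F]_s)\hat{\otimes}KK(e)$ is the desuspension of $KK(i(\pi_{C_{u_F}}))\hat{\otimes}KK(j(C_{u_F}))^{-1}$, which by Lem.~\ref{li} is the connecting morphism of the cone-extension $S\mathbb{K}\to C_{u_F}\xrightarrow{\pi_{C_{u_F}}}C_{u_B}$. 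To match this with the suspended $\mathbb{K}$-part of $e(\xi_F)$, I would use that $C_{u_F}$ sits inside $C_{\xi_F}$ as a subalgebra with $q_F|_{C_{u_F}}=\pi_{C_{u_F}}$, together with the Mayer--Vietoris configuration of the ideals $SF=\ker e(\xi_F)$ and $C_0(0,1]\otimes\mathbb{K}=\ker q_F$ inside $C_{\xi_F}$: they satisfy $SF\cap(C_0(0,1]\otimes\mathbb{K})=S\mathbb{K}$ and $SF+(C_0(0,1]\otimes\mathbb{K})=e(\xi_F)^{-1}(\mathbb{K})=C_{\iota_\mathbb{K}}$, where $\iota_\mathbb{K}\colon\mathbb{K}\hookrightarrow F$ is the ideal inclusion and $C_{\iota_\mathbb{K}}\sim_{KK}SB$ via $a\mapsto(\mathrm{id}\otimes\pi_F)\circ a$ (quotient by the contractible $C_0(0,1]\otimes\mathbb{K}$). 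This data assembles into a ladder between the Puppe sequence of $\xi_F$ and the cone-extension sequence; chasing it under the KK-equivalences $q_F$, $KK(j(C_{u_F}))$ and $C_{\iota_\mathbb{K}}\sim_{KK}SB$ yields the displayed equality.

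I expect the last step to be the main obstacle: setting up the comparison ladder correctly and verifying its commutativity at the level of Kasparov products, while carefully tracking the suspensions and signs (compare Rem.~\ref{es} and Lem.~\ref{li}). Everything else is a formal manipulation of the Kasparov product together with the adjunction relations recorded in Section~3.
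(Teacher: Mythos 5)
Your handling of the first assertion and your reduction scheme are sound, and in fact they coincide with the paper's own argument: your $\rho_{\mathbb{K}}$ and $KK(p)$ are exactly the classes $p_{\mathbb{K}}$ and $p_{\mathbb{C}}$ introduced at the start of the appendix, the decomposition $p_{\mathbb{K}}\hat{\otimes}KK(i_{\mathbb{K}})+p_{\mathbb{C}}\hat{\otimes}KK(i_{\mathbb{C}})=I_{\mathbb{K}+\mathbb{C}}$ is the one used there, the $\mathbb{C}$-component is disposed of by the same relation $\pi\circ e(\xi_F)=e(u_B)\circ q_F$, and your identification of the $\mathbb{K}$-component (via Cor.~\ref{dDd} and Lem.~\ref{li}) with the connecting morphism of $S\mathbb{K}\to C_{u_F}\xrightarrow{\pi_{C_{u_F}}}C_{u_B}$ composed with $KK(q_F)$ is the correct target.

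The gap is the $\mathbb{K}$-component identity itself, which is precisely Lem.~\ref{k2} of the paper, and the Mayer--Vietoris ``ladder chase'' you propose does not prove it. Three obstructions are worth naming. First, $p_{\mathbb{K}}$ is not induced by any $*$-homomorphism $\mathbb{K}+\mathbb{C}\to\mathbb{K}$: such a map sends the unit to a finite-rank projection, hence kills the simple infinite-dimensional ideal $\mathbb{K}$ and induces $0$ on $[e]_0$. So naturality of mapping cones and Puppe sequences --- the engine behind any ladder argument --- cannot be applied to $p_{\mathbb{K}}$ directly; one needs a zigzag representative, and this is exactly what the paper builds with the cone $C_{i_{\mathbb{C}}}$ and the inclusion $x\colon S\mathbb{K}\to C_{i_{\mathbb{C}}}$, giving $I_S\otimes p_{\mathbb{K}}=KK(i(i_{\mathbb{C}}))\hat{\otimes}KK(x)^{-1}$. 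Second, even if you produce a homotopy-commutative ladder between the two triangles, fill-in morphisms in the triangulated category $KK$ exist but are not unique (Lem.~\ref{mn} asserts existence only), so commutativity of a ladder can never pin down an equality of two \emph{given} parallel morphisms. Third, you cannot retreat to a K-theory computation: the map $KK(C_{\xi_F},\mathbb{K})\to\operatorname{Hom}(K_*(C_{\xi_F}),K_*(\mathbb{K}))$ has kernel the $\operatorname{Ext}^1_{\mathbb{Z}}$-term of the UCT, which need not vanish since $K_*(C_{\xi_F})$ may contain torsion. What closes the gap in the paper is concrete and unavoidable: after composing with the KK-equivalences $KK(x)$ and $KK(j(C_{u_F}))$, both sides of the $\mathbb{K}$-component identity become classes of honest $*$-homomorphisms $\varphi=y\circ i(i_{\mathbb{C}})\circ Se(\xi_F)$ and $\psi=i(\pi_{C_{u_F}})\circ Sq_F$ from $SC_{\xi_F}$ to $C_{\pi_{C_{u_F}}}$, and the proof of Lem.~\ref{k2} writes down an explicit two-parameter reparametrization homotopy $\varphi=\varphi_0\sim_h\varphi_1=\psi_1\sim_h\psi_0=\psi$. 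That homotopy is the essential content of the proposition; it is exactly the step your proposal flags as ``the main obstacle'' and leaves unproved.
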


\begin{prop}\label{keyp1}
Let $A, B$ and $E, F$ be as in Def. \ref{skd}, and assume that they satisfy UCT.
Then, the isomorphism
\[(K_0(E), \epsilon [1_E]_0, \delta [e]_0, K_1(E))\cong (Ext_s(B), [F]_s, \iota_B([W]_1), K_1(\mathfrak{D}(B))),\]
holds if and only if
there exists a duality class $\mu_E\in KK(\mathbb{C}, C_{\xi_F}\otimes E)$ satisfying
\[D_{\mu_E, \nu_{\epsilon, \delta}}(KK(e(\xi_F)))=KK(\xi_E)\]
\end{prop}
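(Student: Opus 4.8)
The strategy is to make precise the idea, emphasised in the introduction, that the single class $KK(e(\xi_F)) \in KK(C_{\xi_F}, \mathbb{K}+\mathbb{C})$ encodes the complete extension datum of $F$, while $KK(\xi_E) \in KK(\mathbb{K}+\mathbb{C}, E)$ encodes the K-theory datum of $E$, and to show that the normalisation $D_{\mu_E, \nu_{\epsilon, \delta}}(KK(e(\xi_F))) = KK(\xi_E)$ is exactly the assertion that these two data correspond under the isomorphism of Def.~\ref{skd}. First I would split both classes along the unitisation $\mathbb{K}+\mathbb{C}$: composing $\xi_E$ with $i_\mathbb{C}$ gives $u_E$, while composing $\xi_E$ with $i_\mathbb{K}$ and the rank-one inclusion $e$ gives the class $[e]_0$, so $KK(\xi_E)$ is determined by $[1_E]_0$ and $[e]_0 \in K_0(E)$. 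Dually, Prop.~\ref{keyp2} writes $KK(e(\xi_F)) = KK(q_F)\hat{\otimes}(\overline{\Psi_B([F]_s)} + \overline{KK(e(u_B))})$, with the two summands supported on the $\mathbb{K}$- and $\mathbb{C}$-parts of $\mathbb{K}+\mathbb{C}$. Because $\nu_{\epsilon, \delta}$ pairs $[1\otimes e]_0, [e\otimes 1]_0$ to $\epsilon[1_\mathbb{C}]_0, \delta[1_\mathbb{C}]_0$ and annihilates $[e\otimes e]_0, [1\otimes 1]_0$, contracting $D_{\mu_E, \nu_{\epsilon, \delta}}$ against $i_\mathbb{C}$ and $i_\mathbb{K}$ turns the single equality into the pair of requirements that the $\mathbb{K}$-supported summand be sent by $D_{\mu_E, I_\mathbb{C}}$ to $\epsilon[1_E]_0$ and the $\mathbb{C}$-supported summand to $\delta[e]_0$. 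This is precisely the interchange of $\mathbb{C}$ and $\mathbb{K}$.

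For the direction assuming a duality class $\mu_E$, I would use that a duality class makes $D_{\mu_E, -}$ an isomorphism on the relevant KK-groups. Taking the test algebra to be the self-dual $\mathbb{C}$ gives $D_{\mu_E, I_\mathbb{C}} \colon KK(C_{\xi_F}, \mathbb{C}) \xrightarrow{\cong} KK(\mathbb{C}, E) = K_0(E)$, and taking it to be the self-dual $S$ gives the $K_1$-analogue $KK(C_{\xi_F}, S) \cong K_1(E)$. Pre-composing with the KK-equivalence $KK(q_F)$ from Prop.~\ref{keyp2} and with $\Psi_B^{-1}$ from Cor.~\ref{dDd} identifies the source with $Ext_s(B)$ in degree $0$ and with $K_1(\mathfrak{D}(B))$ in degree $1$, the latter through the identifications of Section~4 (cf.\ Rem.~\ref{dD}, noting $C_{u_B}$ is dualizable since $C_{\xi_F} \sim_{KK} C_{u_B}$). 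Naturality of the Kasparov product with respect to $e(u_B)$, $i(u_B)$ and $u_B$ shows that these isomorphisms intertwine the two six-term exact sequences, producing the commuting diagram of Def.~\ref{skd}. Finally, the reformulation of the first paragraph, together with $\Psi_B(\iota_B([W]_1)) = KK(e(u_B))$ from Cor.~\ref{dDd}, matches the distinguished elements as $\epsilon[1_E]_0 \leftrightarrow [F]_s$ and $\delta[e]_0 \leftrightarrow \iota_B([W]_1)$, which is the tuple isomorphism.

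For the converse I would reverse this, most cleanly via Cor.~\ref{Sed}. The tuple isomorphism says that $F$, through $Ext_s(B) \cong KK(C_{u_B}, \mathbb{C})$, is K-dual to $C_{\xi_E}$, through $K_0(E)$, matching $e(\xi_E)$ with $\xi_F$; since all algebras satisfy the UCT, the associated natural transformation is an isomorphism, so by Lem.~\ref{ci} there is a duality class $\mu_C$ for $C_{\xi_E}$ and $F$ with $D_{\mu_C, \nu_{\epsilon, \delta}}(KK(e(\xi_E))) = KK(\xi_F)$. Applying Cor.~\ref{Sed} with $f = \xi_E$ and $g = \xi_F$ then yields a duality class $\mu_E \in KK(\mathbb{C}, C_{\xi_F} \otimes E)$ satisfying $D_{\mu_E, \nu_{\epsilon, \delta}}(KK(e(\xi_F))) = KK(\xi_E)$, as required. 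Alternatively one can define $\mu_E$ directly by prescribing $D_{\mu_E, -}$ to be the transported tuple isomorphism and verify, via Lem.~\ref{ci} and the UCT, that it is a genuine duality class.

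The main obstacle will be the bookkeeping in the first paragraph: tracking, with correct signs, how $D_{\mu_E, \nu_{\epsilon, \delta}}$ distributes the factors $\epsilon$ and $\delta$ onto the $[F]_s$- and $[1_F]_0$-components when contracted along $i_\mathbb{K}$ and $i_\mathbb{C}$, and confirming that the $\mathbb{K}$- and $\mathbb{C}$-supported components of $KK(e(\xi_F))$ really are $\Psi_B([F]_s)$ and $KK(e(u_B))$ as isolated by Prop.~\ref{keyp2}. This is exactly the step where the roles of $\mathbb{C}$ and $\mathbb{K}$ are swapped, and where a careless sign would break the matching of distinguished elements; everything else is the formal isomorphism-chasing enabled by Lem.~\ref{ci}, Cor.~\ref{Sed} and the UCT.
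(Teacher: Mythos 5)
Your reformulation and your proof of the ``if'' direction are essentially the paper's argument: transport $\mu_E$ along the KK-equivalence $KK(q_F)$, use the decomposition of $KK(e(\xi_F))$ from Prop.~\ref{keyp2}, contract against $KK(i_\mathbb{C})$ and $KK(e)$ using the explicit form of $\nu_{\epsilon,\delta}$, and read off the isomorphisms $Ext_s(B)\cong K_0(E)$ and $K_1(\mathfrak{D}(B))\cong K_1(E)$ together with the matching of distinguished elements via Cor.~\ref{dDd}. That half is fine.

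The converse, however, has a genuine gap: it is circular. The hypothesis is the tuple isomorphism $(K_0(E),\dots)\cong(Ext_s(B),[F]_s,\dots)$, i.e.\ an identification of the K-theory of $E$ with the K-homology of $C_{u_B}$; the duality it can possibly produce is therefore one between $E$ and $C_{u_B}$ (equivalently $C_{\xi_F}$), which is exactly the desired conclusion. Instead you claim it yields a duality class $\mu_C$ for the pair $(C_{\xi_E},F)$ with $D_{\mu_C,\nu}(KK(e(\xi_E)))=KK(\xi_F)$, and then apply Cor.~\ref{Sed}. But that mirror duality encodes the \emph{other} tuple isomorphism $(K_0(F),\dots)\cong(Ext_s(A),[E]_s,\dots)$, which is not part of the hypothesis; in the paper it is deduced (in the proof of Thm.~\ref{mtcp}) only \emph{after} the only-if direction of Prop.~\ref{keyp1} and Cor.~\ref{Sed} are both available. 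So your route assumes what is to be proven. Moreover, two ingredients you omit are indispensable for any proof of this direction. First, Lem.~\ref{ci} cannot conjure a duality class out of abstract group isomorphisms: it requires an element $\mu$ already in hand whose induced natural transformation is bijective. The paper obtains existence of such an element from Lem.~\ref{fg} (the hypothesis forces the K-groups of $C_{u_B}$ to be finitely generated; without this, $C_{u_B}$ need not be dualizable at all), combined with Thm.~\ref{ks} and the UCT, which show that $E$ is KK-equivalent to a dual of $C_{u_B}$. Second, the duality isomorphism $KK(C_{u_B},\mathbb{C})\to K_0(E)$ induced by such a $\mu$ need not agree with the given $\Phi_B^{-1}\circ\Psi_B^{-1}$; the paper corrects $\mu$ by a KK-equivalence $\gamma\in KK(E,E)^{-1}$ (again via UCT) so that the two maps agree, and only then verifies $D_{\mu_E,\nu_{\epsilon,\delta}}(KK(e(\xi_F)))=KK(\xi_E)$ by contracting with $i_\mathbb{K}$ and $i_\mathbb{C}$ and invoking $KK(\mathbb{K}+\mathbb{C},E)=\operatorname{Hom}(K_0(\mathbb{K}+\mathbb{C}),K_0(E))$. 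Your closing ``alternatively'' sentence points in this direction but supplies neither the existence step nor the correction by $\gamma$.
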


\begin{thm}\label{mtcp}
Let $A$ be a unital separable nuclear UCT C*-algebras with finitely generated K-groups,
and let $\mathbb{K}\to E\to A$ be a unital essential extension.
Then the following holds:
\begin{enumerate}
\item{} There exists a unital separable nuclear UCT C*-algebra $B$ and a unital essential extension $\mathbb{K}\to F\to B$ which is strongly K-theoretic dual to $\mathbb{K}\to E\to A$ with respect to $\epsilon\in\{\pm 1\}$.
\item{} Two extensions $E$ and $F$ are strongly K-theoretic dual if and only if there exist duality classes
\[\mu_1\in KK(\mathbb{C}, C_{\xi_F}\otimes E),\quad \nu_2\in KK(C_{\xi_E}\otimes F, \mathbb{C})\]
making the following diagram commute
\[
\begin{tikzcd}[column sep=3.4cm]
E \ar[d,equal] & \ar[l,"D_{\mu_1, \nu_{\epsilon, +1}}(KK(e(\xi_F)))"] \mathbb{C}+\mathbb{K} \ar[d,equal] & \ar[l,"D_{\mu_{\epsilon, +1}, \nu_2}(KK(\xi_F))"] C_{\xi_E} \ar[d,equal] \\
E & \ar[l,"KK(\xi_E)"] \mathbb{C}+\mathbb{K} & \ar[l,"KK(e(\xi_E))"] C_{\xi_E}
\end{tikzcd}
\]
(i.e., a dual sequence of $C_{\xi_F}\xrightarrow{e(\xi_F)}\mathbb{K}+\mathbb{C}\xrightarrow{\xi_F}F$ is given by $E\xleftarrow{\xi_E}\mathbb{C}+\mathbb{K}\xleftarrow{e(\xi_E)}C_{\xi_E}$.).
\end{enumerate}
\end{thm}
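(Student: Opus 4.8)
The plan is to establish Part (2) as a formal consequence of Prop.~\ref{keyp1}, Cor.~\ref{Sed} and Lem.~\ref{inv}, and then to deduce Part (1) by constructing the dual extension explicitly via reciprocality. I would start from the reformulation of Def.~\ref{skd} recorded in the Remark that follows it: $E$ and $F$ are strongly K-theoretic dual with respect to $\epsilon$ exactly when the two marked-tuple isomorphisms $\Phi_B$ and $\Phi_A$ hold. The first, $(K_0(E), \epsilon[1_E]_0, [e]_0, K_1(E)) \cong (Ext_s(B), [F]_s, \iota_B([W]_1), K_1(\mathfrak{D}(B)))$, is translated by Prop.~\ref{keyp1} (with $\delta = +1$) into the existence of a duality class $\mu_1 \in KK(\mathbb{C}, C_{\xi_F}\otimes E)$ satisfying $D_{\mu_1, \nu_{\epsilon,+1}}(KK(e(\xi_F)))=KK(\xi_E)$. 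This is precisely the upper horizontal arrow of the diagram.

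For the lower arrow I would feed this relation into Cor.~\ref{Sed}, taking $f=\xi_F$, the algebra $B=\mathbb{K}+\mathbb{C}$ with its self-duality classes $(\mu_{\epsilon,+1},\nu_{\epsilon,+1})$ from Lem.~\ref{dc}, with $E$ the dual of $C_{\xi_F}$ (witnessed by $\mu_1$) and $g=\xi_E$, since $D_{\mu_1,\nu_{\epsilon,+1}}(KK(e(\xi_F)))=KK(\xi_E)$ identifies $g$. Cor.~\ref{Sed} then produces a duality class $\mu_F\in KK(\mathbb{C}, C_{\xi_E}\otimes F)$ with $D_{\mu_F,\, KK(\sigma_{\mathbb{K}+\mathbb{C},\mathbb{K}+\mathbb{C}})\hat\otimes\nu_{\epsilon,+1}}(KK(e(\xi_E)))=KK(\xi_F)$. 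Writing $\nu_F$ for the counit partner of $\mu_F$ and setting $\nu_2:=KK(\sigma_{C_{\xi_E},F})\hat\otimes\nu_F\in KK(C_{\xi_E}\otimes F,\mathbb{C})$, I would invert this equation with Lem.~\ref{inv}, using the flip identities $\mu_{\epsilon,\delta}\hat\otimes KK(\sigma_{\mathbb{K}+\mathbb{C},\mathbb{K}+\mathbb{C}})=\mu_{\delta,\epsilon}$ and $KK(\sigma_{\mathbb{K}+\mathbb{C},\mathbb{K}+\mathbb{C}})\hat\otimes\nu_{\epsilon,\delta}=\nu_{\delta,\epsilon}$ for the explicit classes of Lem.~\ref{dc}, to rewrite it as $D_{\mu_{\epsilon,+1},\nu_2}(KK(\xi_F))=KK(e(\xi_E))$, which is the lower arrow. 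Conversely, reading the two arrows of a commuting diagram back through Prop.~\ref{keyp1} recovers both marked-tuple isomorphisms, so the diagram commutes iff $E$ and $F$ are strongly K-theoretic dual; this proves Part (2) in both directions.

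For Part (1) I would construct $F$ as an honest extension, not merely its $KK$-class. Since $A$ has finitely generated K-groups and $K_*(\mathbb{K})$ is finitely generated, the six-term sequence shows $E$ also has finitely generated K-groups, so $E$ is dualizable by Thm.~\ref{ks}. I would first replace $E$ by a $KK$-equivalent unital UCT Kirchberg algebra $R$ (Kirchberg--Phillips), and then invoke reciprocality from \cite{S} to produce a unital UCT Kirchberg algebra $B$ whose cone $C_{u_B}$ is a Spanier--Whitehead dual of $R\sim_{KK}E$. This is the step that interchanges $K_0$ with $K_1$ and, at the level of the cone sequences, the roles of $\mathbb{C}$ and $\mathbb{K}$. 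The extension $\mathbb{K}\to F\to B$ is then obtained by applying the inverse of the isomorphism $\Psi_B$ of Cor.~\ref{dDd} to the distinguished class in $KK(C_{u_B},\mathbb{C})$ determined by the duality (the dual of the inclusion $e\colon\mathbb{C}\to E$), so that $[F]_s=\Psi_B^{-1}$ of that class.

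It then remains to verify that this $F$ is strongly K-theoretic dual to $E$. Here Prop.~\ref{keyp2} is the crucial tool, since it shows that $KK(e(\xi_F))$ simultaneously encodes $[F]_s$ (through $\Psi_B([F]_s)$) and the class $KK(e(u_B))$ of the unit; this lets me assemble the duality class $\mu_1$ of Part (2) and match the marked classes $[1_E],[e]_0$ against $[F]_s,\iota_B([W]_1)$, yielding the $\Phi_B$ isomorphism, while the $\Phi_A$ isomorphism follows from the involutivity of Spanier--Whitehead duality. The step I expect to be the main obstacle is the sign and index bookkeeping of Part (2): controlling the four self-duality classes $\mu_{\epsilon,\delta},\nu_{\epsilon,\delta}$ of $\mathbb{K}+\mathbb{C}$ under the flip and under Lem.~\ref{inv}, and confirming that the $\epsilon$ appearing in the diagram is the one for which strong K-duality actually holds. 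A secondary, genuinely $C^*$-algebraic difficulty is realizing the abstract dual $KK$-class as a unital \emph{essential} extension with the correct unit and the correct class $[e]_0$; this is exactly what the Kirchberg algebra replacement, the uniqueness result \cite{GR} (Rem.~\ref{r1}), and $\Psi_B^{-1}$ are there to guarantee.
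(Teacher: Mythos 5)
Your Part (2) is correct and follows the paper's own route: the upper arrow is exactly Prop.~\ref{keyp1} with $\delta=+1$, the lower arrow is converted into the form accepted by Prop.~\ref{keyp1} via Lem.~\ref{inv} and the flip identities for the classes of Lem.~\ref{dc}, and reading both arrows back through Prop.~\ref{keyp1} gives the converse. (In the forward direction the paper obtains the lower arrow from $\Phi_A$ and Prop.~\ref{keyp1} directly, reserving Cor.~\ref{Sed} for Part (1); your derivation of the lower arrow from the upper one via Cor.~\ref{Sed} is also valid and mirrors the paper's Part (1) argument.)

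The genuine gap is the definition of $F$ in your Part (1): you dualize the wrong marked class. You set $[F]_s:=\Psi_B^{-1}$ of the class in $KK(C_{u_B},\mathbb{C})$ dual to the inclusion $e\colon\mathbb{C}\to\mathbb{C}e\subset E$, i.e.\ dual to $[e]_0$. But the reciprocality normalization of \cite[Thm.~3.3]{S} is $KK(u_R)=\mu\hat{\otimes}(KK(e(u_B))\otimes I_R)$, and in Lem.~\ref{exB} the Kirchberg model $R$ is chosen so that $[1_R]_0$ corresponds to $[e]_0\in K_0(E)$ (a normalization your outline never imposes); consequently the class dual to $[e]_0$ is $KK(e(u_B))$, and Cor.~\ref{dDd} gives $\Psi_B^{-1}(KK(e(u_B)))=\iota_B([W]_1)$. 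Your $F$ therefore satisfies $[F]_s=\iota_B([W]_1)$, so the two marked classes of $Ext_s(B)$ collide. Strong duality requires an isomorphism sending $\epsilon[1_E]_0$ to $[F]_s$ \emph{and} $[e]_0$ to $\iota_B([W]_1)$; with your $F$ any such isomorphism would force $\epsilon[1_E]_0=[e]_0$ in $K_0(E)$, which fails in general: for the Cuntz--Toeplitz extension $\mathbb{K}\to E_n\to\mathcal{O}_n$ one has $K_0(E_n)\cong\mathbb{Z}$ generated by $[1_{E_n}]_0$ with $[e]_0=\pm(n-1)[1_{E_n}]_0$, so the equality fails for all $n\geq 3$. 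This also contradicts your own verification plan, which (correctly) matches $\epsilon[1_E]_0$ with $[F]_s$ and $[e]_0$ with $\iota_B([W]_1)$. The repair is exactly the paper's Lem.~\ref{exB}: keep from reciprocality only the three-tuple isomorphism $\Phi_B\colon(K_0(E),[e]_0,K_1(E))\cong(Ext_s(B),\iota_B([W]_1),K_1(\mathfrak{D}(B)))$, and then \emph{define} $F$ by $[F]_s:=\Phi_B(\epsilon[1_E]_0)$. With that definition the rest of your outline --- Prop.~\ref{keyp1} to get the upper arrow, then Cor.~\ref{Sed} followed by a second application of Prop.~\ref{keyp1} to get $\Phi_A$ (this, rather than an appeal to ``involutivity'' of the duality, is what actually produces $\Phi_A$) --- coincides with the paper's proof and goes through.
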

\begin{lem}\label{exB}
Let $A, E$ be as in Thm. \ref{mtcp}, and let $W\in\mathcal{Q}(\mathbb{K})$ be a unitary of ${\rm Ind}(W)=1$.
Then, there (uniquely) exists a unital UCT Kirchberg algebra $B$ satisfying
\[(K_0(E), [e]_0, K_1(E))\cong (Ext_s(B), \iota_B([W]_1), K_1(\mathfrak{D}(B))).\]
\end{lem}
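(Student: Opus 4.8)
The plan is to construct $B$ by first replacing $E$ with a KK-equivalent Kirchberg model, then applying the reciprocality construction of \cite{S}, and finally assembling the required isomorphism of triples by chaining Spanier--Whitehead K-duality with the identification $\Psi_B$ of Cor.~\ref{dDd}. As a preliminary reduction I would record that $E$ is itself a unital separable nuclear UCT C*-algebra with finitely generated K-groups: unitality is inherited from $A$ because $\tau$ is a unital extension, the UCT passes through $\mathbb{K}\to E\to A$, and the six-term sequence together with $K_0(\mathbb{K})=\mathbb{Z}$, $K_1(\mathbb{K})=0$ exhibits $K_0(E)$ and $K_1(E)$ as extensions of subquotients of the finitely generated groups $K_*(A)$. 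Thus Thm.~\ref{ks} applies to $E$, and by the Kirchberg--Phillips classification (cf.~\cite{RS}) I would fix a unital UCT Kirchberg algebra $R$ with a KK-equivalence $\kappa\in KK(E,R)^{-1}$, setting $r:=[e]_0\hat{\otimes}\kappa\in KK(\mathbb{C},R)=K_0(R)$ for the distinguished class corresponding to $[e]_0$.

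Next I would invoke reciprocality \cite{S} to produce a unital UCT Kirchberg algebra $B$ reciprocal to $R$, i.e.\ such that $C_{u_B}$ is a Spanier--Whitehead K-dual of $R$ (hence of $E$, via $\kappa$), equipped with duality classes $\mu\in KK(\mathbb{C},R\otimes C_{u_B})$ and $\nu\in KK(C_{u_B}\otimes R,\mathbb{C})$ chosen so that the induced duality isomorphism $D\colon KK(\mathbb{C},R)\to KK(C_{u_B},\mathbb{C})$ sends $r$ to $KK(e(u_B))$. With this arranged I would define
\[
\Phi:=\Psi_B^{-1}\circ D\circ(-\hat{\otimes}\kappa)\colon K_0(E)\longrightarrow Ext_s(B),
\]
which is an isomorphism by Cor.~\ref{dDd} and the duality. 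Tracking $[e]_0$ through the three maps gives $[e]_0\mapsto r\mapsto KK(e(u_B))\mapsto\Psi_B^{-1}(KK(e(u_B)))$, which equals $\iota_B([W]_1)$ by Cor.~\ref{dDd} precisely because ${\rm Ind}(W)=1$. For the remaining group I would use Rem.~\ref{dD}, giving $K_1(\mathfrak{D}(B))\cong K_1(D(C_{u_B}))$; since $R$ is a dual of $C_{u_B}$ by Lem.~\ref{inv}, one has $D(C_{u_B})\sim_{KK}R$, so composing with $\kappa$ yields $K_1(\mathfrak{D}(B))\cong K_1(R)\cong K_1(E)$. Together these produce the desired isomorphism of triples.

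For uniqueness I would argue that the right-hand triple pins down the Elliott invariant of $B$. Via $\Psi_B$ and the duality it determines $C_{u_B}$ together with the evaluation class $KK(e(u_B))$ up to KK-equivalence, and the mapping-cone exact triangle
\[
K_1(B)\to K_0(C_{u_B})\to\mathbb{Z}\xrightarrow{[1_B]_0}K_0(B)\to K_1(C_{u_B})\to 0
\]
then recovers $K_*(B)$ and the position of $[1_B]_0$. Two unital UCT Kirchberg algebras with the same invariant are isomorphic, so $B$ is unique up to isomorphism (cf.~\cite{GR} and Rem.~\ref{r1}).

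The step I expect to be the main obstacle is the compatibility demanded in the reciprocality construction: duality classes are only canonical up to $KK(R,R)^{-1}$ (see the remark following Def.~\ref{dsw}), so arranging that the evaluation $e(u_B)$ is dual to the distinguished class $r$ itself---rather than to some automorphic image of it---is exactly what \cite{S} must supply, and verifying this normalisation is the crux of the argument. Everything else is a formal chase of isomorphisms that are already available in the excerpt.
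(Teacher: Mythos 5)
Your overall strategy coincides with the paper's: replace $E$ by a Kirchberg model $R$, invoke the reciprocality theorem of \cite{S} to get $B$, chain the resulting duality with $\Psi_B$ from Cor.~\ref{dDd}, and handle the $K_1$-part via Rem.~\ref{dD}. You also correctly identify the normalisation of the duality class as the crux --- but you leave exactly that step unresolved, and the way you have set it up it is in general \emph{false}, not merely unverified. You fix an arbitrary Kirchberg model $R$ with $\kappa\in KK(E,R)^{-1}$ and then ask the reciprocality construction to supply duality classes for $(R,C_{u_B})$ whose induced isomorphism sends $r:=[e]_0\hat{\otimes}\kappa$ to $KK(e(u_B))$. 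What \cite[Thm.~3.3]{S} actually supplies is a reciprocal algebra $B$ whose duality is normalised at the \emph{unit} class: $KK(u_R)=\mu\hat{\otimes}(KK(e(u_B))\otimes I_R)$, i.e.\ $KK(e(u_B))$ corresponds to $[1_R]_0$. Re-choosing the duality classes only moves the class corresponding to $KK(e(u_B))$ through the orbit of $[1_R]_0$ under $KK(R,R)^{-1}$, that is, through automorphic images of $[1_R]_0$ in $K_0(R)$, and $r$ need not lie in this orbit. Concretely, for the Cuntz--Toeplitz extension $\mathbb{K}\to E_n\to\mathcal{O}_n$ with $n\geq 3$ one has $K_0(E_n)\cong\mathbb{Z}$ with $[1_{E_n}]_0=1$ and $[e]_0=\pm(n-1)$; taking the natural model $R=\mathcal{O}_\infty$ (so $[1_R]_0=1$) gives $r=\pm(n-1)\notin\{\pm 1\}$, so no choice of duality classes achieves your normalisation, and your map $\Phi$ would send $[1_E]_0$, not $[e]_0$, to $\iota_B([W]_1)$.

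The missing idea is that the normalisation is arranged by the choice of $R$, not by the choice of duality classes: by the existence (range-of-invariant) part of the Kirchberg--Phillips classification one takes $R$ to be the unital UCT Kirchberg algebra with $(K_0(R),[1_R]_0,K_1(R))\cong(K_0(E),[e]_0,K_1(E))$, so that the \emph{unit} of $R$ corresponds to the rank-one projection class of $E$. This is precisely the first line of the paper's proof. With this choice, the normalisation from \cite[Thm.~3.3]{S} is exactly the one required: choosing by the UCT a KK-equivalence $\gamma\in KK(R,E)^{-1}$ with $KK(u_R)\hat{\otimes}\gamma=[e]_0$, the composition $Ext_s(B)\xrightarrow{\Psi_B}KK(C_{u_B},\mathbb{C})\xrightarrow{\mu\hat{\otimes}}K_0(R)\xrightarrow{\hat{\otimes}\gamma}K_0(E)$ sends $\iota_B([W]_1)\mapsto KK(e(u_B))\mapsto [1_R]_0\mapsto [e]_0$ by Cor.~\ref{dDd}. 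Once this is repaired, the remainder of your argument is sound: the $K_1$-computation via Rem.~\ref{dD} and reciprocality ($D(C_{u_B})=R$) matches the paper, and your uniqueness sketch via the classification theorem is consistent with Rem.~\ref{r1}, which is all the paper itself offers on that point.
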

\begin{proof}
Let $R$ be the unital UCT Kirchberg algebra defined by
\[(K_0(R), [1_R]_0, K_1(R))\cong (K_0(E), [e]_0, K_1(E)).\]
By \cite[{Thm. 3.3.}]{S},
there exist a unital Kirchberg algebra $B$ reciprocal to $R$ and a duality class $\mu\in KK(\mathbb{C}, C_{u_B}\otimes R)$ satisfying $KK(u_R)=\mu\hat{\otimes}(KK(e(u_B))\otimes I_R)$.
The UCT gives a KK-equivalence $\gamma\in KK(R, E)^{-1}$ satisfying $KK(u_R)\hat{\otimes}\gamma=[e]_0\in KK(\mathbb{C}, E)=K_0(E)$,
and the isomorphism
\[Ext_s(B)\xrightarrow{\Psi_B}KK(C_{u_B}, \mathbb{C})\xrightarrow{\mu\hat{\otimes}}KK(\mathbb{C}, R)\xrightarrow{\hat{\otimes}\gamma}KK(\mathbb{C},  E)=K_0(E)\]
sends $\iota_B([W]_1)$ to $[e]_0$ by Cor. \ref{dDd}.
The reciprocality (i.e,. $D(C_{u_B})=R$) and Rem. \ref{dD} imply $$K_1(\mathfrak{D}(B))\cong K_1(D(C_{u_B}))\cong K_1(R)\cong K_1(E)$$
and this completes the proof.
\end{proof}
\begin{proof}[{The proof of Thm. \ref{mtcp}}]
Since statement 2 immediately follows from Prop. \ref{keyp1} and Lem. \ref{inv},
we only have to show statement 1.
By Lem. \ref{exB},
one has a unital UCT Kirchberg algebra $B$ with the following isomorphism
\[\Phi_B : (K_0(E), [e]_0, K_1(E))\cong (Ext_s(B), \iota_B([W]_1), K_1(\mathfrak{D}(B))),\]
and there is a unital essential extension $F$ defined by 
\[\Phi_B(\epsilon [1_E]_0)=[F]_s.\]
By Prop. \ref{keyp1},
one has a duality class $\mu_E\in KK(\mathbb{C}, C_{\xi_F}\otimes E)$ satisfying
\[D_{\mu_E, \nu_{\epsilon, +1}}(KK(e(\xi_F)))=KK(\xi_E).\]
Since $KK(\sigma_{\mathbb{K}+\mathbb{C}, \mathbb{K}+\mathbb{C}})\hat{\otimes}\nu_{\epsilon, +1}=\nu_{+1, \epsilon}$,
Cor.~\ref{Sed} shows that
there exists a duality classes $\mu_F\in KK(\mathbb{C}, C_{\xi_E}\otimes F)$ satisfying
\[D_{\mu_F, \nu_{+1, \epsilon}}(KK(e(\xi_E)))=KK(\xi_F).\]
Now Prop. \ref{keyp1} gives an isomorphism
\[\epsilon\Phi_A : (K_0(F), [1_F]_0, \epsilon [e]_0, K_1(F))\cong (Ext_s(A), [E]_s, \iota_A([W]_1), K_1(\mathfrak{D}(A))). \qedhere\]
\end{proof}
\begin{ex}
For the Cuntz algebra $\mathcal{O}_n$,
one has $$(Ext_s(\mathcal{O}_n), \iota_{\mathcal{O}_n}([W]_1), [E(1)]_s)\cong (\mathbb{Z}, n-1, 1),$$ where $W\in U(\mathcal{Q}(\mathbb{K}))$ is a unitary with ${\rm Ind} ([W]_1)=1$ and $E(1):=E_n$ is the Cuntz--Toeplitz algebra.
Denote by $E(m)$ the extension which satisfies $[E(m)]_s=m[E(1)]_s$.
For $m\in \mathbb{N}$,
the algebra $E(m)$ is given by
\[E(m):=\mathbb{M}_m(\mathbb{C})\otimes\mathbb{K}+1_m\otimes E(1)\subset \mathbb{M}_m(\mathbb{C})\otimes \mathcal{M}(\mathbb{K}).\]
Since $$(K_0(\mathbb{M}_m(E(-\epsilon))), [e]_0, [1]_0)\cong (\mathbb{Z}, n-1, \epsilon m),$$ 
Rem. \ref{r1} implies that the strong K-theoritic dual of $$\mathbb{K}\to E(m)\to \mathcal{O}_n, \quad m>0$$ with respect to $\epsilon \in \{\pm 1\}$ is $$\mathbb{M}_m(\mathbb{K})\to \mathbb{M}_m(E(-\epsilon))\to\mathbb{M}_m(\mathcal{O}_n).$$
\end{ex}

%%%%%%%%%%%%%%%%%%%%%%%%%%%%%%%%%%%%%%%%%%%%%%%%%%%%%%%%%%%%%%%%%%%%%%%%%%%%%%%%%%%%%%%%%%%%%%%%%%%%%%%%%%%%%%%%%%%%%%%%%%%%%%%%%%%%%%%%%%%%%%%%%%%%%%%%%%%%%%%%%%%%%%%%%%%%%%%%%%%%%%%%

\section{Appendix}
\subsection{Proof of Prop. \ref{keyp2}}
%Let $i_\mathbb{K}$ (resp. $i_\mathbb{C}$) be the inclusion $\mathbb{K}\hookrightarrow\mathbb{K}+\mathbb{C}$ (resp. $\mathbb{C}\hookrightarrow \mathbb{K}+\mathbb{C}$),
Denote by $p_\mathbb{K}$ and $p_\mathbb{C}$ the morphisms defined by 
\[p_\mathbb{K} \in KK(\mathbb{K}+\mathbb{C}, \mathbb{K})=\operatorname{Hom}(K_0(\mathbb{K}+\mathbb{C}), K_0(\mathbb{K})),\quad p_\mathbb{K}\hat{\otimes}[e]_0=[e]_0,\; p_\mathbb{K}\hat{\otimes}[1]_0=0,\]
\[p_\mathbb{C}\in KK(\mathbb{K}+\mathbb{C}, \mathbb{C})=\operatorname{Hom}(K_0(\mathbb{K}+\mathbb{C}), K_0(\mathbb{C})),\quad p_\mathbb{C}\hat{\otimes}[e]_0=0,\; p_\mathbb{C}\hat{\otimes}[1]_0=[1]_0.\]
One has $p_\mathbb{K}\hat{\otimes}KK(i_\mathbb{K})+p_\mathbb{C}\hat{\otimes}KK(i_\mathbb{C})=I_{\mathbb{K}+\mathbb{C}}$.
\begin{lem}\label{k2}
%We define $p_{\mathbb{K}}\in KK(\mathbb{K}+\mathbb{C}1_{\mathcal{M}(\mathbb{K})}, \mathbb{K})$ by
%\[[e]_0\hat{\otimes}p_{\mathbb{K}}=[e]_0,\quad [1_{\mathcal{M}(\mathbb{K})}]_0\hat{\otimes}p_{\mathbb{K}}=0.\]
The following diagram commutes:
\[
\begin{tikzcd}[column sep=2.4cm]
SC_{\xi_F} \ar[r,"I_S\otimes KK(e({\xi_F}))"] \ar[d,"I_S\otimes KK(q_F)" left] & S(\mathbb{K}+\mathbb{C}1) \ar[d,"I_S\otimes p_\mathbb{K}"] \\
SC_{u_B} \ar[r,"I_S\otimes (\Psi_B({[F]}_s)\hat{\otimes}KK(e))"] & S\mathbb{K}.
\end{tikzcd}
\]
In particular, one has $KK(e(\xi_F))\hat{\otimes}p_\mathbb{K}\hat{\otimes}KK(i_\mathbb{K})=KK(q_F)\hat{\otimes}\overline{\Psi_B([F]_s)}$
\end{lem}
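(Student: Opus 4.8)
The plan is to reduce the commutativity of the square to an equality of two honest extension classes in $Ext(C_{\xi_F},S\mathbb{K})$ and then to compare explicit Busby invariants. Since $I_S\otimes(-)\colon KK(X,Y)\to KK(SX,SY)$ is a bijection, the stated square commutes if and only if the de-suspended identity
\[
KK(e(\xi_F))\hat{\otimes}p_\mathbb{K}=KK(q_F)\hat{\otimes}(\Psi_B([F]_s)\hat{\otimes}KK(e))\in KK(C_{\xi_F},\mathbb{K})
\]
holds. The final ``in particular'' assertion is then immediate: composing this identity on the right with $KK(i_\mathbb{K})$ and recalling $\overline{\Psi_B([F]_s)}=\Psi_B([F]_s)\hat{\otimes}KK(e)\hat{\otimes}KK(i_\mathbb{K})$ gives exactly the displayed formula. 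So I only have to prove the displayed equality.

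Next I would pass to extensions via $\eta$. By the definition of $\Psi_B$ in Cor.~\ref{dDd}, namely $\Psi_B=-\eta_{C_{u_B},\mathbb{C}}^{-1}\circ m_B$, together with $m_B([F]_s)=[c(\sigma)]$, we get $\eta_{C_{u_B},\mathbb{C}}(\Psi_B([F]_s))=-[c(\sigma)]$, the class of the essential extension $S\mathbb{K}\to C_{u_F}\xrightarrow{\pi_{C_{u_F}}}C_{u_B}$ (this is also the content of Lem.~\ref{li} in boundary form, the sign in $\Psi_B$ cancelling the one there). Composing the target identity with $KK(e)^{-1}$ to land in $KK(-,\mathbb{C})$ and using the naturality of the isomorphism $\eta_{-,\mathbb{C}}$ under the $*$-homomorphisms $q_F$ and $e(\xi_F)$, the desired equality becomes the equality of extensions of $C_{\xi_F}$ by $S\mathbb{K}$
\[
e(\xi_F)^*\,\eta_{\mathbb{K}+\mathbb{C}1,\mathbb{C}}\big(p_\mathbb{K}\hat{\otimes}KK(e)^{-1}\big)=-\,q_F^*[c(\sigma)].
\]
Here the right-hand side is the pullback of $c(\sigma)$ along $q_F$, with Busby invariant $x\mapsto\pi(L_\sigma(\pi_F\circ x))$ for a unital completely positive lift $L_\sigma\colon B\to\mathcal{M}(\mathbb{K})$ of $\sigma$.

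For the left-hand side I would represent $p_\mathbb{K}\hat{\otimes}KK(e)^{-1}\in KK(\mathbb{K}+\mathbb{C}1,\mathbb{C})$ by the Cuntz pair $[\phi_0,\phi_1]$ with $\phi_0$ the inclusion $\mathbb{K}+\mathbb{C}1\subset\mathcal{M}(\mathbb{K})$ and $\phi_1(k+\mu 1)=\mu 1_{\mathcal{M}(\mathbb{K})}$, so that $\phi_0-\phi_1$ takes values in $\mathbb{K}$ and the induced map on $K_0$ sends $[e]_0\mapsto 1$, $[1]_0\mapsto 0$, as required of $p_\mathbb{K}$. By \eqref{eqn:Blackadar-def} its extension class has Busby invariant $x\mapsto\pi\big(t\,\kappa(x(1))+\mu(x(1))1\big)$, where $x(1)=\kappa(x(1))+\mu(x(1))1_F\in\mathbb{K}+\mathbb{C}1_F$. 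I would then exhibit the equivalence with $-q_F^*[c(\sigma)]$ directly: choosing $L_\sigma$ to take values in $F=\pi^{-1}(\sigma(B))\subset\mathcal{M}(\mathbb{K})$, so that $L_\sigma\circ\pi_F$ agrees with the identity modulo $C_0(0,1]\otimes\mathbb{K}$, reduces the whole comparison to the compact part $\kappa(x(1))$ of the evaluation, and this is precisely the datum recorded by both constructions. The required unitary and the linear-in-$t$ homotopy can then be written out explicitly, exactly in the style of the computations in the proofs of Lem.~\ref{li} and Thm.~\ref{mcs}.

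The main obstacle is this last matching step. Because $p_\mathbb{K}$ is not induced by a $*$-homomorphism but only by a Cuntz pair (reflecting the KK-splitting $I_{\mathbb{K}+\mathbb{C}1}=p_\mathbb{K}\hat{\otimes}KK(i_\mathbb{K})+KK(i_\mathbb{C}\circ q_\mathbb{C})$ of the unitisation), the equality cannot be obtained by chasing homomorphisms and must be checked on explicit extension representatives. Aligning the linear-in-$t$ compact term of the $p_\mathbb{K}$-extension with the $C_0(0,1]\otimes\mathbb{K}$-defect of $L_\sigma\circ\pi_F$, and pinning down the overall sign so that the inverse class $-q_F^*[c(\sigma)]$ comes out correctly, is where the real work lies. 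What makes the matching possible at all is the elementary compatibility $q_\mathbb{C}\circ e(\xi_F)=e(u_B)\circ q_F$ of the scalar parts, which guarantees that both sides see the same $\mu(x(1))$.
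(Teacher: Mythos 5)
Your reduction is sound as far as it goes: de-suspending through the isomorphism $I_S\otimes(-)$, transporting both sides into $Ext(C_{\xi_F},S\mathbb{K})$ by naturality of $\eta$ together with $\eta_{C_{u_B},\mathbb{C}}(\Psi_B([F]_s))=-[c(\sigma)]$, and representing $p_\mathbb{K}\hat{\otimes}KK(e)^{-1}$ by the Cuntz pair of the inclusion $\mathbb{K}+\mathbb{C}1\subset\mathcal{M}(\mathbb{K})$ and the scalar-part homomorphism are all correct, as is the Busby invariant you extract for the left-hand side via \eqref{eqn:Blackadar-def}. The genuine gap is that the proof stops exactly where the content of the lemma begins: the identity
\[
e(\xi_F)^*\,\eta_{\mathbb{K}+\mathbb{C}1,\mathbb{C}}\bigl(p_\mathbb{K}\hat{\otimes}KK(e)^{-1}\bigr)\;=\;-\,q_F^*[c(\sigma)]
\]
is never proved; you describe the data to be matched and then defer the matching (``where the real work lies''). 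Moreover, this last step is not a routine comparison of Busby invariants. Because of the minus sign, exhibiting a strong or stable equivalence between the two explicit extensions would prove the \emph{wrong} statement; what must be shown is that their \emph{sum} is a split class. Writing $a(1)=\kappa(a(1))+\mu(a(1))1$ for $a\in C_{\xi_F}\subset C_0(0,1]\otimes F$ and using that any u.c.p.\ lift $L_\sigma$ automatically takes values in $F$, the sum has Busby invariant
\[
a\;\longmapsto\;\pi\Bigl(\operatorname{diag}\bigl(t\kappa(a(1))+\mu(a(1))1,\;a(t)-t\kappa(a(1))\bigr)\Bigr)
\]
(with $a(t)$ acting as a multiplier of $S\mathbb{K}$), and one has to connect it, by an explicit homotopy of extensions or a unitary after adding degenerate summands, to the manifestly liftable invariant $a\mapsto\pi\bigl(\operatorname{diag}(\mu(a(1))1,\,a(t))\bigr)$. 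No K-theoretic shortcut can replace this construction: $B$ and $F$ are not assumed to have finitely generated K-groups here, so a class in $KK(C_{\xi_F},\mathbb{C})\cong Ext(C_{\xi_F},S\mathbb{K})$ is not determined by its action on K-theory (there is an $\operatorname{Ext}^1_{\mathbb{Z}}(K_1(C_{\xi_F}),\mathbb{Z})$ ambiguity).

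For comparison, the paper never leaves the world of $*$-homomorphisms: it factors both compositions through the mapping cone $C_{\pi_{C_{u_F}}}$ of $S\mathbb{K}\to C_{u_F}\to C_{u_B}$, identifies $I_S\otimes p_\mathbb{K}=KK(i(i_\mathbb{C}))\hat{\otimes}KK(x)^{-1}$ via the K-theory exact sequence of $0\to S\mathbb{K}\xrightarrow{x}C_{i_\mathbb{C}}\to C_0(0,1]\to 0$, identifies $I_S\otimes(\Psi_B([F]_s)\hat{\otimes}KK(e))=KK(i(\pi_{C_{u_F}}))\hat{\otimes}KK(j(C_{u_F}))^{-1}$ by Cor.~\ref{dDd}, and then closes the argument with the explicit two-parameter homotopies $\varphi_h,\psi_h$ between $*$-homomorphisms; the sign bookkeeping is entirely absorbed into Cor.~\ref{dDd}, which is already proven. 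That concrete homotopy is precisely the analogue of the step your proposal leaves undone; until you supply its counterpart on the extension side, including the treatment of the group inverse, the argument is a plan rather than a proof.
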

\begin{proof}
Recall the Busby invariant $\sigma : B\to\mathcal{Q}(\mathbb{K})$ of the extension $\mathbb{K}\to F\to B$.
We write $CB:=C_0(0, 1]\otimes B, CF:=C_0(0, 1]\otimes F$ for short.
We use the following identification
\[SC_{\xi_F}=\{f_t(s)\in C_0(0, 1]\otimes (CF)\; |\;f_t(1)\in \mathbb{K}+\mathbb{C}1_F,\; f_1(s)=0 \},\]
\[SC_{u_B}=\{b_t(s)\in C_0(0, 1]\otimes (CB)\; |\; b_t(1)\in \mathbb{C}1_B,\; b_1(s)=0\}, \]
\begin{align*}
&C_{\pi_{C_{u_F}}}=\\
&\{(b_t(s), f(s))\in (C_0(0, 1]\otimes(CB))\oplus (CF)\;|\; b_1(s)=\pi_F(f(s)),\; b_t(1)\in \mathbb{C}1_B,\; f(1)\in\mathbb{C}1_F\},
\end{align*}
where the third algebra is the mapping cone obtained from the extension 
\[[c(\sigma)]=[S\mathbb{K}\to C_{u_F}\xrightarrow{\pi_{C_{u_F}}} C_{u_B}].\]

%We write $w:=u_{\mathbb{K}+\mathbb{C}1} : \mathbb{C}\to \mathbb{K}+\mathbb{C}1_{\mathcal{M}(\mathbb{K})}$ for short,
For the mapping cone $C_{i_\mathbb{C}}=\{f(s)\in C_0(0, 1]\otimes (\mathbb{K}+\mathbb{C})\;|\; f(1)\in\mathbb{C}\}$,
we define two maps $x : S\mathbb{K}\to C_{i_\mathbb{C}}$ and $y : C_{i_\mathbb{C}}\to C_{\pi_{C_{u_F}}}$ by
\[x : S\mathbb{K}\ni f(s)\mapsto f(s)\in C_{i_\mathbb{C}},\]
\[y : C_{i_\mathbb{C}}\ni f(s)\mapsto (\pi_{F}(f(ts)), f(s))\in C_{\pi_{C_{u_F}}},\]
where $\pi_F : F\to F/\mathbb{K}=B$ is the quotient map.
Consider the following diagram:
\[
\begin{tikzcd}
SC_{\xi_F} \ar[dd,"Sq_F"] \ar[r,"Se({\xi_F})"] & S(\mathbb{K}+\mathbb{C}1) \ar[d,"i(i_\mathbb{C})" left] & & \\
& C_{i_\mathbb{C}} \ar[d,"y" left] & S\mathbb{K} \ar[l,"x"] \ar[ul] \ar[dl,"j{(C_{u_F})}"] \\
SC_{u_B} \ar[r,"i{(\pi_{C_{u_F}})}"] & C_{\pi_{C_{u_F}}} &,
\end{tikzcd}
\]
where the triangles on the right hand side commute.
By the extension
\[0\to S\mathbb{K}\xrightarrow{x} C_{i_\mathbb{C}}\to C_0(0, 1]\to 0,\]
the inclusion $x$ is a KK-equivalence and $y=x^{-1}\hat{\otimes}j(C_{u_F})$ is also a KK-equivalence.
The exact sequence
\[K_*(S)\xrightarrow{K_*(Si_\mathbb{C})} K_*(S(\mathbb{K}+\mathbb{C}1))\xrightarrow{K_*(i(i_{\mathbb{C}}))} K_*(C_{i_\mathbb{C}}),\]
and the commutativity of the upper right triangle implies $KK(i(i_\mathbb{C}))\hat{\otimes}KK(x)^{-1}=I_S\otimes p_{\mathbb{K}}$.

Since Cor. \ref{dDd} shows $KK(i{(\pi_{C_{u_F}}}))\hat{\otimes}KK(j{(C_{u_F})})^{-1}=I_S\otimes(\Psi_B([F]_s)\hat{\otimes}KK(e))$,
it is enough to check that the large square commutes up to homotopy.
For $\varphi : =y\circ i(i_\mathbb{C})\circ Se({\xi_F})$ and $\psi : =i{(\pi_{C_{u_F}})}\circ Sq_F$,
one has
\[\varphi : f_t(s)\mapsto (\pi_F(f_{ts}(1)), f_s(1)),\quad \psi : f_t(s)\mapsto (\pi_F(f_t(s)), 0).\]
It is straightforward to check that the following maps from $SC_{\xi_F}$ to $C_{\pi_{C_{u_F}}}$ are well-defined for $h\in[0, 1]$:
\[\varphi_h : f_t(s)\mapsto (\pi_F(f_{ts}(hs+(1-h))), f_s(hs+(1-h))),\]
\[\psi_h : f_t(s)\mapsto (\pi_F(f_{t(hs+(1-h))}(s)), f_{(hs+(1-h))}(s)),\]
and one has 
\[\varphi=\varphi_0\sim_h\varphi_1=\psi_1\sim_h\psi_0=\psi. \qedhere\]
\end{proof}
\begin{proof}[{Proof of Prop. \ref{keyp2}}]
The kernel of $q_F$ is $C_0(0, 1]\otimes\mathbb{K}$ and this implies $q_F$ is a KK-equivalence.

We have $\overline{KK(e(u_B))}=KK(e(u_B))\hat{\otimes}KK(i_\mathbb{C})$ by definition,
and it is easy to check $\pi\circ e(\xi_F)=e(u_B)\circ q_F$ for the quotient map $\pi : \mathbb{K}+\mathbb{C}\to \mathbb{C}$.
Since $p_\mathbb{C}=KK(\pi)$,
one has
\[KK(e(\xi_F))\hat{\otimes}p_\mathbb{C}\hat{\otimes}KK(i_\mathbb{C})=KK(q_F)\hat{\otimes}\overline{KK(e(u_B))},\]
and Lem. \ref{k2} shows
\[KK(e(\xi_F))=KK(e(\xi_F))\hat{\otimes}(p_\mathbb{K}\hat{\otimes}KK(i_\mathbb{K})+p_\mathbb{C}\hat{\otimes}KK(i_\mathbb{C}))=KK(q_F)\hat{\otimes}(\overline{\Psi_B([F]_s)}+\overline{KK(e(u_B))}).\]
\end{proof}

\subsection{Proof of Prop. \ref{keyp1}}
We use the following lemma proved in \cite{KPW}.
\begin{lem}\cite[{Sec. 4.4.}]{KPW}\label{fg}
Let $A, B, E, F$ be as in Prop. \ref{keyp1} (i.e., they are separable nuclear UCT C*-algebras.).
If there is an isomorphism
\[\Phi_B : (K_0(E), \epsilon [1_E]_0, \delta [e]_0, K_1(E))\cong (Ext_s(B), [F]_s, \iota_B([W]_1), K_1(\mathfrak{D}(B))),\]
the K-groups $K_i(C_{u_B}), \;i=0, 1$ are finitely generated.
In particular, the K-groups of $E, A, B, F$ are finitely generated.
\end{lem}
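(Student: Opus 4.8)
The plan is to reduce the whole statement to the single claim that $K_0(C_{u_B})$ and $K_1(C_{u_B})$ are finitely generated; the finite generation of $K_*(E),K_*(A),K_*(B),K_*(F)$ will then follow mechanically from six-term exact sequences. I would begin by translating the hypothesis into a statement about the K-homology of $C_{u_B}$. The isomorphism $\Phi_B$ supplies group isomorphisms $K_0(E)\cong Ext_s(B)$ and $K_1(E)\cong K_1(\mathfrak{D}(B))$. Using the identification $Ext_s(B)\cong KK(C_{u_B},\mathbb{C})$ recorded after Thm.~\ref{SKase}, together with the isomorphism $K_1(\mathfrak{D}(B))\cong KK(SC_{u_B},\mathbb{C})$ coming from the commutative diagram preceding Rem.~\ref{dD} (its leftmost vertical arrow is an isomorphism by the Five-Lemma, as the four adjacent columns are isomorphisms and both rows are exact), I obtain
\[ K_0(E)\cong KK(C_{u_B},\mathbb{C}),\qquad K_1(E)\cong KK(SC_{u_B},\mathbb{C}). \]
Since $E$ is separable, $K_0(E)$ and $K_1(E)$ are countable, so both K-homology groups of $C_{u_B}$ are countable.

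Next I would feed this into the UCT for $C_{u_B}$, which is a mapping cone of a morphism of UCT algebras and hence satisfies the UCT. Writing $G_i:=K_i(C_{u_B})$, the two UCT sequences read
\[ 0\to \operatorname{Ext}^1_{\mathbb{Z}}(G_1,\mathbb{Z})\to KK(C_{u_B},\mathbb{C})\to \operatorname{Hom}(G_0,\mathbb{Z})\to 0, \]
\[ 0\to \operatorname{Ext}^1_{\mathbb{Z}}(G_0,\mathbb{Z})\to KK(SC_{u_B},\mathbb{C})\to \operatorname{Hom}(G_1,\mathbb{Z})\to 0. \]
As both middle terms are countable, all four outer terms are countable; that is, $\operatorname{Hom}(G_i,\mathbb{Z})$ and $\operatorname{Ext}^1_{\mathbb{Z}}(G_i,\mathbb{Z})$ are countable for $i=0,1$.

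The heart of the argument, and the step I expect to be the main obstacle, is the purely algebraic claim that a countable abelian group $G$ with $\operatorname{Hom}(G,\mathbb{Z})$ and $\operatorname{Ext}^1_{\mathbb{Z}}(G,\mathbb{Z})$ both countable is finitely generated. I would prove this in two stages. First, from $0\to T(G)\to G\to G/T(G)\to 0$ one gets a surjection $\operatorname{Ext}^1_{\mathbb{Z}}(G,\mathbb{Z})\twoheadrightarrow \operatorname{Ext}^1_{\mathbb{Z}}(T(G),\mathbb{Z})\cong \operatorname{Hom}(T(G),\mathbb{Q}/\mathbb{Z})$, the Pontryagin dual of $T(G)$; an infinite discrete torsion group has an uncountable compact dual, forcing $T(G)$ to be finite. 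Passing to the torsion-free quotient (which keeps both $\operatorname{Hom}$ and $\operatorname{Ext}^1$ countable) I would then invoke the dichotomy for countable torsion-free groups: such a group is either free, in which case infinite rank forces $\operatorname{Hom}(-,\mathbb{Z})$ to be the uncountable Specker product $\mathbb{Z}^{\mathbb{N}}$, or non-free, in which case $\operatorname{Ext}^1_{\mathbb{Z}}(-,\mathbb{Z})$ has the cardinality of the continuum. Both options are ruled out by countability, so the torsion-free quotient is some $\mathbb{Z}^n$ and $G$ is finitely generated. The delicate point is precisely the non-free case, where one needs the sharp statement that non-freeness forces $\operatorname{Ext}^1_{\mathbb{Z}}(-,\mathbb{Z})$ to be uncountable (a quantitative strengthening of the Stein--Serre resolution of Whitehead's problem for countable groups), which I would import as a standard fact of abelian group theory. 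Applying the lemma to $G_0$ and $G_1$ shows $K_*(C_{u_B})$ is finitely generated.

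Finally I would propagate finite generation. The six-term sequence of $0\to SB\to C_{u_B}\to \mathbb{C}\to 0$ exhibits $K_1(C_{u_B})$ as $K_0(B)/\mathbb{Z}[1_B]_0$ and $K_0(C_{u_B})$ as an extension of a subgroup of $\mathbb{Z}$ by $K_1(B)$, whence $K_*(B)$ is finitely generated. Then $\operatorname{Hom}$ and $\operatorname{Ext}^1_{\mathbb{Z}}$ of the finitely generated groups $G_i$ are finitely generated, so the UCT sequences above give that $K_*(E)$ is finitely generated; the six-term sequence of $\mathbb{K}\to E\to A$ then yields $K_*(A)$ finitely generated, and that of $\mathbb{K}\to F\to B$ yields $K_*(F)$ finitely generated. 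This establishes the ``in particular'' clause and completes the plan.
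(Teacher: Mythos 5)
Your proof is correct and follows essentially the same route as the paper: separability of $E$ gives countability of $Ext_s(B)\cong KK(C_{u_B},\mathbb{C})\cong K_0(E)$ and $K_1(\mathfrak{D}(B))\cong KK^1(C_{u_B},\mathbb{C})\cong K_1(E)$, the UCT then makes $\operatorname{Hom}(K_i(C_{u_B}),\mathbb{Z})$ and $\operatorname{Ext}^1_{\mathbb{Z}}(K_i(C_{u_B}),\mathbb{Z})$ countable, and the algebraic fact that a countable abelian group with countable $\operatorname{Hom}(-,\mathbb{Z})$ and $\operatorname{Ext}^1_{\mathbb{Z}}(-,\mathbb{Z})$ must be finitely generated finishes the argument. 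The only difference is one of detail: the paper delegates that algebraic fact (finite torsion via the Pontryagin dual, the Specker/non-free-Ext dichotomy for the torsion-free quotient) and the six-term-sequence propagation to $E, A, B, F$ to the citation \cite[Sec.~4.4]{KPW}, whereas you spell both out, correctly.
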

\begin{proof}
The separability of $E$ implies that the groups $Ext_s(B)\cong KK(C_{u_B}, \mathbb{C}),\; K_1(\mathfrak{D}(B))\cong KK^1(C_{u_B}, \mathbb{C})$ are countable.
Thus, UCT implies that $\operatorname{Hom}(K_i(C_{u_B}), \mathbb{Z})$ and $\operatorname{Ext}^1_\mathbb{Z}(K_i(C_{u_B}), \mathbb{Z})$ are countable groups.
Now the same argument as in \cite[{Sec. 4.4.}]{KPW} shows the statement.
\end{proof}

\begin{proof}[{Proof of Prop. \ref{keyp1}}]
First, we prove the if-direction.
For a duality class $\mu_E\in KK(\mathbb{C}, C_{\xi_F}\otimes E)$,
we define a duality class $\mu$ by
\[\mu:=\mu_E\hat{\otimes}(KK(q_F)\otimes I_E)\in KK(\mathbb{C}, C_{u_B}\otimes E).\]
Prop.~\ref{keyp2} and the assumption imply
\[D_{\mu, \nu_{\epsilon, \delta}}(\overline{\Psi_B([F]_s)}+\overline{KK(e(u_B))})=KK(\xi_E).\]
Since $(KK(i_\mathbb{C})\otimes I_{\mathbb{K}+\mathbb{C}})\hat{\otimes}\nu_{\epsilon, \delta}=\epsilon p_\mathbb{K}\hat{\otimes}KK(e)^{-1}\in KK(\mathbb{K}+\mathbb{C}, \mathbb{C})$,
one has
\begin{align*}
[1_E]_0=&KK(i_\mathbb{C})\hat{\otimes}D_{\mu, \nu_{\epsilon, \delta}}(\overline{\Psi_B([F]_s)}+\overline{KK(e(u_B))})\\
=&\mu\hat{\otimes}((\overline{\Psi_B([F]_s)}+\overline{KK(e(u_B))})\otimes I_E)\hat{\otimes}((\epsilon p_\mathbb{K}\hat{\otimes}KK(e)^{-1})\otimes I_E)\\
=&\mu\hat{\otimes}(\epsilon \Psi_B([F]_s)\otimes I_E).
\end{align*}
Similarly, the equation $(KK(e)\otimes I_{\mathbb{K}+\mathbb{C}})\hat{\otimes}\nu_{\epsilon, \delta}=\delta p_\mathbb{C}\in KK(\mathbb{K}+\mathbb{C}, \mathbb{C})$ implies
\begin{align*}
[e]_0=&KK(e)\hat{\otimes}D_{\mu, \nu_{\epsilon, \delta}}(\overline{\Psi_B([F]_s)}+\overline{KK(e(u_B))})\\
=&\mu\hat{\otimes}((\overline{\Psi_B([F]_s)}+\overline{KK(e(u_B))})\otimes I_E)\hat{\otimes}(\delta p_\mathbb{C}\otimes I_E)\\
=&\mu\hat{\otimes}(\delta KK(e(u_B))\otimes I_E).
\end{align*}
Now we have the desired isomorphisms
\[Ext_s(B)\xrightarrow{\Psi_B} KK(C_{u_B}, \mathbb{C})\xrightarrow{\mu\hat{\otimes}(-\otimes I_E)}KK(\mathbb{C}, E)=K_0(E),\]
\[K_1(\mathfrak{D}(B))\cong KK(C_{u_B}, S)\xrightarrow{\mu\hat{\otimes}}KK(\mathbb{C}, SE)\cong K_1(E).\]

Next, we show the only if-direction.
We identify $[1_E]_0, [e]_0\in K_0(E)$ with $KK(u_E), KK(\mathbb{C}\to \mathbb{C}e\subset E)\in KK(\mathbb{C}, E)$.
Assume that there is an isomorphism
\[\Phi_B : (K_0(E), \epsilon [1_E]_0, \delta [e]_0, K_1(E))\to (Ext_s(B), [F]_s, \iota_B([W]_1), K_1(\mathfrak{D}(B))).\]

Lem. \ref{fg}, Thm. \ref{ks} and the UCT imply that $E$ and $C_{u_B}$ are Spanier--Whitehead K-dual with a duality class $\mu\in KK(\mathbb{C}, C_{u_B}\otimes E)$.
The UCT gives a KK-equivalence $\gamma \in KK(E, E)^{-1}$ making the following diagram commute
\[
\begin{tikzcd}[column sep=2.5cm]
KK(C_{u_B}, \mathbb{K}) \ar[d,"\Phi_B^{-1}\circ \Psi_B^{-1}"] \ar[r,"\mu\hat{\otimes}"] & KK(\mathbb{C}, E)\ar[d,dashed,"\hat{\otimes}\gamma"] \\
KK(\mathbb{C}, E)\ar[r,"{\rm id}"] & KK(\mathbb{C},  E).
\end{tikzcd}
\]
We show that \[\mu_E:=\mu\hat{\otimes}(KK(q_F)^{-1}\otimes I_E)\hat{\otimes}(I_{C_{\xi_F}}\otimes \gamma)\in KK(\mathbb{C}, C_{\xi_F}\otimes E)\]
satisfies $D_{\mu_E, \nu_{\epsilon, \delta}}(KK(e(\xi_F)))=KK(\xi_E)$.
Similar computation as in the  if part yields
\begin{align*}
&KK(i_\mathbb{K})\hat{\otimes}D_{\mu_E, \nu_{\epsilon, \delta}}(KK(e(\xi_F)))\\
=&KK(e)^{-1}\hat{\otimes}\mu\hat{\otimes}(\delta KK(e(u_B))\otimes I_E)\hat{\otimes}\gamma\\
=&KK(e)^{-1}\hat{\otimes}\Phi_B^{-1}\circ\Psi_B^{-1}(\delta KK(e(u_B)))\\
=&KK(e)^{-1}\hat{\otimes}\delta^2 KK(\mathbb{C}\to \mathbb{C}e\subset E)\\
=&KK(\mathbb{K}\hookrightarrow E)(=[e]_0)
\end{align*}
and
\begin{align*}
&KK(i_\mathbb{C})\hat{\otimes}D_{\mu_E, \nu_{\epsilon, \delta}}(KK(e(\xi_F)))\\
=&\mu\hat{\otimes}(\epsilon \Psi_B([F]_s\otimes I_E)\hat{\otimes}\gamma\\
=&\Phi_B^{-1}\circ\Psi_B^{-1}(\epsilon \Psi_B([F]_s))\\
=&KK(u_E)(=[1_E]_0).
\end{align*}
%where we identify $KK(\mathbb{C}\to \mathbb{C}e\subset E)\in KK(\mathbb{C}, E)$ with $[e]_0\in K_0(E)$ and identify $KK(u_E)\in KK(\mathbb{C}, E)$ with $[1_E]_0\in K_0(E)$.
Thus, the UCT $$KK(\mathbb{K}+\mathbb{C}, E)=\operatorname{Hom}(K_0(\mathbb{K}+\mathbb{C}), K_0(E))$$ implies
\begin{align*}
D_{\mu_E, \nu_{\epsilon, \delta}}(KK(e(\xi_F)))=&(p_\mathbb{K}\hat{\otimes}KK(i_\mathbb{K})+p_\mathbb{C}\hat{\otimes}KK(i_\mathbb{C}))\hat{\otimes}D_{\mu_E, \nu_{\epsilon, \delta}}(KK(e(\xi_F)))\\
=&p_\mathbb{K}\hat{\otimes}KK(\mathbb{K}\hookrightarrow E)+p_\mathbb{C}\hat{\otimes}KK(\mathbb{C}\xrightarrow{u_E}E)\\
=&KK(\mathbb{K}+\mathbb{C}\xrightarrow{\xi_E} E). \qedhere
\end{align*}
\end{proof}

\subsection{The isomorphism $\eta_{A,B} \colon KK(A,B) \to Ext(A, SB \otimes \mathbb{K})$ in \cref{eqn:Blackadar-19.2.6}}
In this section we will give a proof of the fact that the map $\eta_{A,B}$ defined in \cref{eqn:Blackadar-19.2.6,eqn:Blackadar-def} is an isomorphism for two nuclear $C^*$-algebras $A$ and $B$, since we could not find a proof of this statement in the literature. We will show that $\eta_{A,B}$ fits into the following commutative diagram
\[
	\begin{tikzcd}
		KK(A,B) \ar[r,"\eta_{A,B}"] \arrow[d,"b" left, "\cong" right] & Ext(A,SB \otimes \mathbb{K}) \\
		KK^1(A,SB) \arrow[ur, "\cong" below]
	\end{tikzcd}
\]
The map $b \colon KK(A,B) \to KK^1(A, SB)$ is given by Bott periodicity and therefore an isomorphism \cite[Cor.~19.2.2]{B}. In the Cuntz picture for the group $KK^1(A,SB)$ a class $[\psi, P]$ is represented by a projection $P \in M(SB\otimes\mathbb{K})$ and a $*$-homomorphism $\psi \colon A \to M(SB\otimes\mathbb{K})$ with $P\psi(a) - \psi(a)P \in SB \otimes \mathbb{K}$ for all $a \in A$. The diagonal map sends $[\psi,P]$ to the Busby invariant 
\[
	\tau_{[\psi,P]}(a) = \pi(P\psi(a)P)\ .
\]
This provides an isomorphism by \cite[Prop.~17.6.5]{B}. Hence $\eta_{A,B}$ will turn out to be an isomorphism, once we have shown that the above diagram commutes. 

The homomorphism $b$ is given by the Kasparov product with a class $\mathbf{b} \in KK(\C, C_0(\R, \C l_1))$ given by the Kasparov module 
\[	
	\left(\lambda,C_0(\R,\C l_1), F \right)\ ,
\] 
where $\lambda \colon \C \to M(C_0(\R,\C l_1))$ is the unit homomorphism and $F$ is the multiplier on $C_0(\R, \C l_1)$ corresponding to the function $xg\,(1 + x^2)^{-\tfrac{1}{2}}$, where $g \in \C l_1$ is the generator with $g^2 = 1$. Note that $x \mapsto x\,(1 + x^2)^{-\tfrac{1}{2}}$ provides a homeomorphism $\R \to (-1,1)$ with inverse map $y \mapsto y\,(1 - x^2)^{-\tfrac{1}{2}}$, which induces a $*$-isomorphism 
\[
	\theta \colon C_0((-1,1), \C l_1) \to C_0(\R,\C l_1)\ .
\]
Pulling back $\mathbf{b}$ with $\theta$ turns it into the Kasparov module
\[	
	(\lambda, C_0((-1,1), \C l_1), \hat{F})\ ,
\]
where $\hat{F}$ is the multiplier on $C_0((-1,1), \C l_1)$ corresponding to the function $x \mapsto x\,g$. On the interval $[-1,1]$ the identity function is homotopic relative to its endpoints to $s(x) = \sin(\tfrac{\pi}{2}x)$. Hence, we may replace $\hat{F}$ by the multiplier $\widetilde{F}$ corresponding to $s \cdot g$ without changing the $KK$-class. For the rest of this section we will identify $S$ with $C_0(-1,1)$. The Bott class is then represented by the Kasparov module
\[
	(\lambda, S\C l_1, \widetilde{F}) \in KK(\C, S\C l_1)\ .
\]
Let $[\phi_0, \phi_1] \in KK(A,B)$ be a Cuntz pair. Let $H_B = \ell^2(\N) \otimes B$ and $\hat{H}_B = H_B^{(0)} \oplus H_B^{(1)}$ with the superscripts denoting the even, respectively odd part. The class $[\phi_0, \phi_1]$ corresponds to the Kasparov module
\[
	\left(\phi_0 \oplus \phi_1, \hat{H}_B, \begin{pmatrix} 0 & 1 \\ 1 & 0 \end{pmatrix}\right)\ .
\]
The internal graded tensor product $\hat{H}_B \otimes_B (S\C l_1 \otimes B)$ is isomorphic to the external graded tensor product $\hat{H}_{SB} \otimes \C l_1$. Note that the adjointable $S\C l_1 \otimes B$-linear maps on this module are isomorphic to $\hat{M}_2(\C) \otimes M(SB \otimes \bK) \otimes \C l_1$. This is a graded tensor product of $C^*$-algebras, where $\hat{M}_2(\C)$ denotes the complex $2 \times 2$-matrices with the diagonal/off-diagonal grading. 

Let $c(x) = \cos(\tfrac{\pi}{2}x)$ and observe that $c \in S$. 
\begin{lem}
	Let $(\phi_0, \phi_1)$ be a Cuntz pair representing a class in $KK(A,B)$. The Kasparov intersection product $[\phi_0, \phi_1] \widehat{\otimes} (\mathbf{b} \otimes I_B) \in KK^1(A,SB)$ is represented by the Kasparov module
	\begin{equation} \label{eqn:Kasparov_product}
		\left((\phi_0 \oplus \phi_1) \otimes 1, \hat{H}_{SB} \otimes \C l_1, G \right) \ ,		
	\end{equation}
	where $G \in \hat{M}_2(\C) \otimes M(SB \otimes \bK) \otimes \C l_1$ is the odd operator
	\[
		G = \begin{pmatrix}
			s & 0 \\
			0 & s 
		\end{pmatrix} \otimes g + 
		\begin{pmatrix}
			0 & c \\
			c & 0 
		\end{pmatrix} \otimes 1
	\]
\end{lem}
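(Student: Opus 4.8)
The plan is to verify that $G$ satisfies the Connes--Skandalis criteria characterising the operator representing the Kasparov product (see \cite[18.10--18.11]{B}). Writing $\phi = (\phi_0\oplus\phi_1)\otimes 1$ for the representation of $A$ on $\hat{H}_{SB}\otimes\C l_1$, and setting $F_1 = \left(\begin{smallmatrix}0&1\\1&0\end{smallmatrix}\right)$, $F_2 = \widetilde{F}$, I must check that (i) $(\phi, \hat{H}_{SB}\otimes\C l_1, G)$ is a Kasparov $(A,SB)$-module, (ii) $G$ is an $F_2$-connection, and (iii) $\phi(a)\,[F_1\otimes 1, G]\,\phi(a)^*\ge 0$ modulo $\mathcal{K}$ for all $a\in A$, where $[\,\cdot\,,\,\cdot\,]$ denotes the graded commutator. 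Once these hold, $G$ represents $[\phi_0,\phi_1]\widehat{\otimes}(\mathbf{b}\otimes I_B)$. Throughout I write $D = sI_2$ and $O = \left(\begin{smallmatrix}0&c\\c&0\end{smallmatrix}\right)$, so that $G = D\otimes g + O\otimes 1$.

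Conditions (i) and (iii) reduce to short algebraic computations in the graded algebra $\hat{M}_2(\C)\otimes M(SB\otimes\bK)\otimes\C l_1$. Using $g^2=1$ and the graded multiplication rule, the cross terms cancel ($DO = OD = sO$) and one finds $G^2 = (s^2+c^2)(I_2\otimes 1) = I_2\otimes 1$, since $s^2+c^2 = \sin^2(\tfrac{\pi}{2}x)+\cos^2(\tfrac{\pi}{2}x) = 1$ as a multiplier of $S$; in particular $G^2-1=0$. Self-adjointness of $G$ is immediate as $s,c$ are real and $g=g^*$. For the commutator, $s$ and $c$ are central multipliers acting only in the $S$-variable and hence commute with $\phi(a)$, so $[G,\phi(a)] = [O\otimes 1,\phi(a)]$, whose off-diagonal entries are $\pm c\,(\phi_0(a)-\phi_1(a))$; because $\phi_0(a)-\phi_1(a)\in B\otimes\bK$ and $c\in S$, these lie in $SB\otimes\bK = \mathcal{K}(\hat{H}_{SB})$, giving (i). The same computation yields $[F_1\otimes 1, G] = \{F_1\otimes 1, D\otimes g\} + \{F_1\otimes 1, O\otimes 1\}$; the first graded anticommutator vanishes (the two odd terms cancel), while the second equals $(F_1 O + O F_1)\otimes 1 = 2c\,(I_2\otimes 1)$. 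As $c\ge 0$ on $(-1,1)$ and is central, $\phi(a)[F_1\otimes 1,G]\phi(a)^* = 2c\,\phi(a)\phi(a)^*\ge 0$ exactly, establishing (iii).

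The heart of the argument is (ii). I would decompose $G = (1\widehat{\otimes}F_2) + (O\otimes 1)$ and first identify the diagonal part $D\otimes g$ with the canonical $F_2$-connection $1\widehat{\otimes}F_2$ on $\hat{H}_B\otimes_B(SB\C l_1)$; this is the step where the Koszul signs in the graded tensor product must be tracked carefully. Granting it, $G\,T_\xi - (-1)^{\partial\xi}\,T_\xi F_2 = (O\otimes 1)\,T_\xi$ for every homogeneous $\xi\in\hat{H}_B$, so it remains to show that $O\otimes 1$ is a $0$-connection, i.e.\ $(O\otimes 1)T_\xi\in\mathcal{K}(SB\C l_1,\hat{H}_{SB}\C l_1)$. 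For an elementary tensor $\xi = \zeta\otimes b$ with $\zeta\in\hat{H}$, $b\in B$, a direct computation gives $(O\otimes 1)T_\xi(\eta) = (F_1\zeta)\otimes\big((c\otimes b\otimes 1)\,\eta\big)$, which factors as creation by the vector $F_1\zeta$ composed with left multiplication by the element $c\otimes b\otimes 1$. Here the crucial point is that $c = \cos(\tfrac{\pi}{2}x)\in S$ \emph{vanishes at $\pm 1$}, so $c\otimes b\otimes 1\in S\otimes B\otimes\C l_1 = SB\C l_1 = \mathcal{K}(SB\C l_1)$ rather than merely lying in the multiplier algebra; left multiplication by a genuine module element is compact, and a bounded operator composed with a compact operator is compact. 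By continuity in $\xi$ and density of elementary tensors this extends to all $\xi$, and the adjoint connection condition follows by taking adjoints since $G$ and $F_2$ are self-adjoint. This proves (ii).

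Combining (i)--(iii) identifies $G$ with the Kasparov product operator and proves the lemma. The main obstacle I anticipate is (ii): both the sign bookkeeping needed to recognise the diagonal part as the canonical $F_2$-connection, and the compactness of the $F_1$-part $O\otimes 1$, the latter hinging precisely on $c\in S$ vanishing at the endpoints of the interval. The Kasparov-module axioms and the positivity condition are comparatively routine once the graded algebra $\hat{M}_2(\C)\otimes M(SB\otimes\bK)\otimes\C l_1$ is set up.
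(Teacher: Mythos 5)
Your proposal is correct and takes essentially the same route as the paper: both arguments verify the Connes--Skandalis criterion (cited in the paper as \cite[Def.~18.4.1]{B}) by checking that the triple is a Kasparov module ($G^2=1$, $G=G^*$, and $[G,(\phi_0\oplus\phi_1)\otimes 1]$ compact because its entries are $\pm c(\phi_1-\phi_0)$), that $G$ is a connection for $\widetilde{F}$, and that the graded commutator satisfies $[F_1\otimes 1,G]=2c\otimes 1\geq 0$. The only difference is cosmetic and concerns the connection step: the paper computes $G\cdot M_{\xi}-(-1)^{\partial\xi}M_{\xi}(s\otimes g)$ directly on homogeneous $\xi$ and observes that the result is a $c$-twisted creation operator, hence compact, whereas you repackage the identical computation as the decomposition of $G$ into the canonical connection $1\widehat{\otimes}\widetilde{F}=D\otimes g$ plus the $0$-connection $O\otimes 1$, with your factorization of $(O\otimes 1)T_{\xi}$ through left multiplication by $c\otimes b\otimes 1\in S\otimes B\otimes \C l_1$ making explicit the compactness assertion that the paper records only as an observation.
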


\begin{proof}
For $\xi \in \hat{H}_{B}$ let $M_{\xi} \colon S\C l_1 \to \hat{H}_{SB} \otimes \C l_1$ be the module map that sends $f \otimes x \in S\C l_1$ to $f\xi \otimes x \in \hat{H}_{SB} \otimes \C l_1$. Let $\xi^{(i)} \in H_B^{(i)}$. Then we have
\begin{align*}
	& G \cdot M_{\xi^{(0)} \oplus 0}(f \otimes x) - M_{\xi^{(0)} \oplus 0}((s \otimes g) \cdot (f \otimes x)) =  (0 \oplus cf\xi^{(0)}) \otimes x\ ,\\
	& G \cdot M_{0 \oplus \xi^{(1)}}(f \otimes x) + M_{0 \oplus \xi^{(1)}}((s \otimes g) \cdot (f \otimes x)) \\
	= & -(0 \oplus sf\xi^{(1)}) \otimes gx + (0 \oplus sf\xi^{(1)}) \otimes gx + (cf\xi^{(1)} \oplus 0) \otimes x = (cf\xi^{(1)} \oplus 0) \otimes x\ ,
\end{align*}
where we used the graded multiplication on the external tensor product $\hat{H}_{SB} \otimes \C l_1$ to obtain the second equation. Note that both of these commutators are given by compact operators and the argument for the adjoint of $M_{\xi}$ is completely analogous.

By the above computation, the operator $G$ satisfies $G^2 = 1$ (the mixed terms vanish because of the graded tensor product) and $G = G^*$. The first summand commutes with $(\phi_0 \oplus \phi_1) \otimes 1$. Therefore
\[
	[G, (\phi_0 \oplus \phi_1) \otimes 1] = \left[ 
	\begin{pmatrix} 0 & c \\ c & 0 \end{pmatrix},
	\begin{pmatrix} \phi_0 & 0 \\ 0 & \phi_1 \end{pmatrix}
	\right] \otimes 1 = \begin{pmatrix}
		0 & c(\phi_1 - \phi_0) \\
		c(\phi_0 - \phi_1) & 0
	\end{pmatrix} \otimes 1
\]
is a compact operator. This shows that \eqref{eqn:Kasparov_product} is a Kasparov module. Finally, the graded commutator between the operator for the class $[\phi_0, \phi_1]$ and $G$ evaluates to 
\begin{align*}
	& \left[\begin{pmatrix} 0 & 1 \\ 1 & 0 \end{pmatrix} \otimes 1, G\right] \\
	=\ & \begin{pmatrix} 0 & s \\ s & 0 \end{pmatrix} \otimes g - \begin{pmatrix} 0 & s \\ s & 0 \end{pmatrix} \otimes g + \begin{pmatrix} 2c & 0 \\ 0 & 2c \end{pmatrix} \otimes 1 = \begin{pmatrix} 2c & 0 \\ 0 & 2c \end{pmatrix} \otimes 1 \geq 0
\end{align*}
By \cite[Def.~18.4.1]{B} the Kasparov module defined in \eqref{eqn:Kasparov_product} represents $[\phi_0, \phi_1] \widehat{\otimes} (\mathbf{b} \otimes I_B)$.
\end{proof}

As pointed out in \cite[Cor.~14.5.3]{B} there is a $*$-isomorphism 
\begin{equation} \label{eqn:mat_Cl1_iso}
	\hat{M}_2(\C) \otimes \C l_1 \cong M_2(\C) \oplus M_2(\C)\ .
\end{equation}
It maps an even element $T \otimes 1$ to $(T,T)$, an odd element $T \otimes 1$ to $(T,-T)$ and the odd element $1 \otimes g$ to $(T_g,-T_g)$ where $T_g$ is the grading operator  
\[
	T_g = \begin{pmatrix}
		1 & 0 \\
		0 & -1
	\end{pmatrix}\ .
\]

\begin{lem}
	Let $\theta(x) = \frac{\pi}{4}(x + 1)$, $s_{\theta}(x) = \sin(\theta(x))$ and $c_{\theta}(x) = \cos(\theta(x))$. In the Cuntz picture the $KK$-class $[\phi_0, \phi_1] \widehat{\otimes} (\mathbf{b} \otimes I_B) \in KK^1(A,SB)$ is represented by the pair $(\psi, P)$ with 
	\[
		\psi = \begin{pmatrix}
		s_\theta^2\,\phi_0  + c_\theta^2\,\phi_1  & s_\theta c_\theta\,(\phi_1 - \phi_0) \\
		s_\theta c_\theta\,(\phi_1 - \phi_0) & c_\theta^2\,\phi_0  + s_\theta^2\,\phi_1 
	\end{pmatrix} \quad \text{and} \quad 
	P = \begin{pmatrix}
		1 & 0 \\
		0 & 0
	\end{pmatrix}\ .
	\]
	In particular, the Busby invariant of the associated extension is 
	\[
		\tau_{[\psi, P]}(a) = \pi(t\,\phi_0(a)  + (1-t)\,\phi_1(a))\ .
	\]
\end{lem}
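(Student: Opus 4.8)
The plan is to convert the Kasparov module in \eqref{eqn:Kasparov_product} from the graded Clifford picture of the previous lemma into the ungraded projection (Cuntz) picture of $KK^1(A,SB)$, and then to bring it into the stated normal form by a unitary rotation. First I would apply the $*$-isomorphism \eqref{eqn:mat_Cl1_iso} to the coefficient algebra $\hat{M}_2(\C) \otimes M(SB\otimes\mathbb{K}) \otimes \C l_1 \cong (M_2(\C) \oplus M_2(\C)) \otimes M(SB\otimes\mathbb{K})$. Using that even matrices $T \otimes 1$ go to $(T,T)$, odd ones to $(T,-T)$, and $1 \otimes g \mapsto (T_g, -T_g)$, the even homomorphism $(\phi_0 \oplus \phi_1) \otimes 1$ is sent to $(\phi_0 \oplus \phi_1,\, \phi_0 \oplus \phi_1)$, while the odd operator $G$ is sent to $(G_1, -G_1)$ with $G_1 = \begin{pmatrix} s & c \\ c & -s \end{pmatrix}$, a self-adjoint unitary because $s^2 + c^2 = 1$. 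Reading off the first summand identifies the class with the ungraded $KK^1$-cycle $(\phi_0 \oplus \phi_1, G_1)$ over $SB$, whose projection-picture representative is the Cuntz pair $(\phi_0 \oplus \phi_1, P_1)$ with $P_1 = \tfrac12(1 + G_1)$ by \cite[Prop.~17.6.5]{B}.

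Next I would conjugate this pair by a unitary multiplier to reach $(\psi, P)$. Writing $\theta(x) = \tfrac{\pi}{4}(x+1)$, the double-angle identities give $s = \sin\tfrac{\pi}{2}x = -\cos 2\theta$ and $c = \cos\tfrac{\pi}{2}x = \sin 2\theta$, so that $G_1 = \begin{pmatrix} -\cos 2\theta & \sin 2\theta \\ \sin 2\theta & \cos 2\theta \end{pmatrix}$. A direct computation then shows that the unitary $U = \begin{pmatrix} s_\theta & -c_\theta \\ c_\theta & s_\theta \end{pmatrix} \in M_2(M(SB\otimes\mathbb{K}))$ satisfies $U^* G_1 U = \mathrm{diag}(1,-1)$, hence $U^* P_1 U = \mathrm{diag}(1,0) = P$, while $U^*(\phi_0 \oplus \phi_1) U = \psi$. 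Since conjugation by a unitary multiplier does not change the Cuntz class, the pair $(\psi, P)$ represents the same $KK^1$-class, giving the first assertion. Note that the off-diagonal entry $s_\theta c_\theta(\phi_1 - \phi_0)$ lies in $SB \otimes \mathbb{K}$ because $s_\theta c_\theta = \tfrac12 c$ vanishes at $x = \pm 1$ and $\phi_0(a) - \phi_1(a) \in B \otimes \mathbb{K}$, confirming $[P,\psi(a)]$ is compact.

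For the Busby invariant I would compute $P\psi(a)P$, which is the top-left corner $s_\theta^2\,\phi_0(a) + c_\theta^2\,\phi_1(a)$, so that $\tau_{[\psi,P]}(a) = \pi(s_\theta^2\,\phi_0(a) + c_\theta^2\,\phi_1(a))$. Under the reparametrization $t = \sin^2\theta(x) = \sin^2(\tfrac{\pi}{4}(x+1))$, which is a homeomorphism $(-1,1) \to (0,1)$ identifying $S = C_0(-1,1)$ with $C_0(0,1)$, one has $s_\theta^2 = t$ and $c_\theta^2 = 1-t$, yielding the formula in \eqref{eqn:Blackadar-def}.

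I expect the main obstacle to be the conceptual first step: correctly passing from the graded $\C l_1$-Kasparov module to the ungraded projection picture, i.e.\ justifying that the first summand of \eqref{eqn:mat_Cl1_iso} carries the $KK^1$-class with the correct orientation and sign (the companion summand $-G_1$ representing the complementary projection). The subsequent rotation by $U$ and the reparametrization are routine, though sign-sensitive.
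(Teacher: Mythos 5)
Your proposal is correct and follows essentially the same route as the paper's proof: apply the isomorphism \eqref{eqn:mat_Cl1_iso} to send $G$ to $(T,-T)$, take the first summand, pass to the projection $\tfrac{1}{2}(1+T)$, conjugate by the rotation $\begin{pmatrix} s_\theta & -c_\theta \\ c_\theta & s_\theta \end{pmatrix}$, and reparametrize via the homeomorphism $t = s_\theta^2$. The only cosmetic difference is that you diagonalize $G_1$ directly (and add the worthwhile explicit check that the off-diagonal entries of $[\,P,\psi(a)\,]$ lie in $SB\otimes\mathbb{K}$), whereas the paper factors the projection $\bar P = \tfrac{1}{2}(T+1)$ as $U P U^{*}$ --- these are the same computation.
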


\begin{proof}
The isomorphism on $\hat{M}_2(\C) \otimes M(SB \otimes \bK) \otimes \C l_1$ induced by \eqref{eqn:mat_Cl1_iso} maps $G$ to the operator $(T, -T)$ with 
\[
	T = \begin{pmatrix}
		s & c \\
		c & -s
	\end{pmatrix}\ .
\]
Let $\bar{P} = \frac{1}{2}(T + 1)$. Then we have 
\[
	\bar{P} = \frac{1}{2} \begin{pmatrix}
		(s+1) & c \\
		c & -(s-1)
	\end{pmatrix} = 
	\begin{pmatrix}
		s_{\theta} & -c_{\theta} \\
		c_{\theta} & s_{\theta} 
	\end{pmatrix}
	\begin{pmatrix}
		1 & 0 \\
		0 & 0 
	\end{pmatrix}
	\begin{pmatrix}
		s_{\theta} & c_{\theta} \\
		-c_{\theta} & s_{\theta} 
	\end{pmatrix}\ .
\]
Therefore the Kasparov product corresponds to the Cuntz pair $(\phi_0 \oplus \phi_1, \bar{P})$  Conjugating $\bar{P}$ and $\phi_0 \oplus \phi_1$ by the inverse of the rotation matrix gives $P$ and 
\[
	\begin{pmatrix}
		s_\theta & c_\theta \\
		-c_\theta & s_\theta 
	\end{pmatrix}
	\begin{pmatrix}
		\phi_0 & 0 \\
		0 & \phi_1
	\end{pmatrix}
	\begin{pmatrix}
		s_\theta & -c_\theta \\
		c_\theta & s_\theta 
	\end{pmatrix} = \begin{pmatrix}
		\phi_0 s_\theta^2 + \phi_1 c_\theta^2 & (\phi_1 - \phi_0)s_\theta c_\theta \\
		(\phi_1 - \phi_0)s_\theta c_\theta & \phi_0 c_\theta^2 + \phi_1 s_\theta^2
	\end{pmatrix}\ .
\]
Note that $t = s_{\theta}^2$ is a homeomorphism between $(-1,1)$ and $(0,1)$. The pullback of the Cuntz pair with respect to $s_{\theta}^2$ will therefore have $t\phi_0 + (1-t)\phi_1$ in the upper left hand corner. This shows the final statement. 
\end{proof}

\end{document}